\documentclass[14 pt, a4paper]{article}
\usepackage{latexsym}
\usepackage{floatpag}
\usepackage{multirow}
\usepackage{graphicx}
\usepackage{amsmath}
\usepackage{amssymb}
\usepackage{tikz}
\usepackage{caption}
\usepackage{tkz-euclide}
\usetikzlibrary{positioning}
\usepackage{centernot}
\usepackage{amsfonts}
\usepackage{changepage}
\usepackage{enumerate}
\usepackage[a4paper,left=2cm,right=2cm,top=2cm,bottom=2cm]{geometry}
\usepackage{diagbox}
\usepackage{authblk}
\usepackage{tabu}
\linespread{1}
\usepackage{amsthm}
\usepackage{hyperref}
\usepackage{float}
\usetikzlibrary{calc}
\usepackage{lipsum}

\setlength\parindent{0pt}
\makeatletter
\def\th@plain{%
  \thm@notefont{}% same as heading font
  \itshape % body font
}
\def\th@definition{%
  \thm@notefont{}% same as heading font
  \normalfont % body font
}
\makeatother

\newtheorem{theorem}{Theorem}
\newtheorem{col}{Corollary}

\newtheorem{lemma}{Lemma}

\newtheorem{obs}{Observation}
\newtheorem{conj}{Conjecture}

\newtheorem{q}{Question}
\newtheorem{probl}{Problem}
\newtheorem{constr}{Construction}
\theoremstyle{definition}
\newtheorem{definition}{Definition}
\newtheorem{rem}{Remark}
\theoremstyle{plain}

\DeclareMathOperator{\ex}{ex}
\DeclareMathOperator{\sat}{sat}
\DeclareMathOperator{\tsat}{tsat}

\usepackage{graphicx} % Required for inserting images

\title{Twin-free $K_r$-saturated Graphs and Maximally Independent Sets in $K_3$-free Graphs}
\author{Asier Calbet\footnote{School of Mathematical Sciences, Queen Mary University of London, United Kingdom, \href{mailto:a.calbetripodas@qmul.ac.uk}{\nolinkurl{a.calbetripodas@qmul.ac.uk}}}}
\date{}

\begin{document}

\maketitle

\begin{abstract}

\noindent We say that two vertices are twins if they have the same neighbourhood and that a graph is $K_r$-saturated if it does not contain $K_r$ but adding any new edge to it creates a $K_r$. In 1964, Erd\H{o}s, Hajnal and Moon showed that $\sat(n,K_r)=(r-2)n+o(n)$ for $r \geq 3$, where $\sat(n,K_r)$ is the minimum number of edges in a $K_r$-saturated graph on $n$ vertices, and determined the unique extremal graph. This graph has many twins, leading us to define $\tsat(n,K_r)$ to be the minimum number of edges in a twin-free $K_r$-saturated graph on $n$ vertices. We show that $(5 +2/3)n + o(n) \leq \tsat(n,K_3) \leq 6n+o(n)$ and that $\left(r+2\right)n + o(n) \leq \tsat(n,K_r) \leq (r+3)n+o(n)$ for $r \geq 4$. We also consider a variant of this problem where we additionally require the graphs to have large minimum degree. Both of these problems turn out be intimately related to two other problems regarding maximally independent sets of a given size in $K_3$-free graphs and generalisations of these problems to $K_r$ with $r \geq 4$. The first problem is to maximise the number of maximally independent sets given the number of vertices and the second problem is to minimise the number of edges given the number of maximally independent sets. They are interesting in their own right.
\end{abstract}

\section{Introduction}

Given a graph $H$, we say that a graph $G$ is $H$-saturated if it is maximally $H$-free, meaning $G$ contains no copy of $H$ but every graph $G+e$ obtained by adding a new edge $e$ to $G$ contains $H$. The saturation problem is to determine, or at least estimate, the saturation number $\sat(n,H)$, defined (for graphs $H$ with at least one edge and integers $n \geq 0$) to be the minimum number of edges in an $H$-saturated graph $G$ on $n$ vertices. We are usually interested in fixed $H$ and large $n$. The saturation problem is dual to the Tur\'an forbidden subgraph problem of determining the extremal number $\ex(n,H)$, defined to be the maximum number of edges in an $H$-saturated graph $G$ on $n$ vertices. However, much less is known for the saturation problem than for the  Tur\'an problem. See \cite{Saturation Survey} for a survey of the saturation problem.  \\

For certain $H$ the saturation number is known exactly. In particular, Erd\H{o}s, Hajnal and Moon showed (Theorem 1 in \cite{Erdos Hajnal and Moon}) that for $r \geq 2$ and $n \geq r-2$, $\sat(n,K_r)=(r-2)n-\binom{r-1}{2}$ and that the unique extremal graph consists of a $K_{r-2}$ fully connected to an independent set of size $n-(r-2)$ (for $n<r$, $K_n$ is the unique $K_r$-saturated graph on $n$ vertices). Note that for large $n$ this graph contains many vertices of degree $r-2$. Moreover, this is the smallest possible degree of a vertex in a $K_r$-saturated graph on $n \geq r-1$ vertices. One might therefore ask what happens if we forbid vertices of degree $r-2$, and more generally, vertices of small degree. In \cite{Mine} (see the second paragraph of the introduction), this led us to define $\sat(n,K_r,t)$ for integers $r \geq 3$, $t \geq r-2$ and large enough integers $n$ to be the minimum number of edges in a $K_r$-saturated graph $G$ on $n$ vertices with $\delta(G) \geq t$ (more precisely, such graphs exist if and only if $n \geq (r-1)t / (r-2) $). This quantity (or rather the one obtained by replacing the condition $\delta(G) \geq t$ with $\delta(G)=t$) was first considered by Duffus and Hanson (second paragraph of the Introduction in \cite{Duffus and Hanson}). \\

One could also note that the extremal graph has $r-2$ conical vertices (vertices adjacent to all other vertices), leading one to define $\sat^{\Delta}(n,K_r)$ to be the minimum number of edges in a $K_r$-saturated graph $G$ on $n$ vertices with $\Delta(G) \leq \Delta$, and indeed this quantity has been studied (see section 2.1 in \cite{Saturation Survey}). In this paper, we instead note that for large $n$ the extremal graph has many pairs of twins (vertices with the same neighbourhood), leading us to define $\tsat(n,K_r)$ for integers $r \geq 3$ and large enough integers $n$ to be the minimum number of edges in a twin-free $K_r$-saturated graph $G$ on $n$ vertices. It is not immediately clear whether such graphs even exist, but we show that they do for fixed $r$ and large $n$. In fact, we completely determine for which $r$ and $n$ such graphs exist.

\begin{theorem}\label{existence}
Let $r \geq 3$ and $n \geq 0$ be integers. Then there exists a twin-free $K_r$-saturated graph on $n$ vertices unless $n=r$, $n=r+1$, $r=3$ and $n=6$, or $r=3$ and $n=7$.
\end{theorem}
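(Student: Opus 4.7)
The plan is to split the proof into a non-existence part for the exceptional $(r,n)$ and an explicit construction for every remaining pair.

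For the non-existence part, when $n=r$ I would observe that any non-edge of a $K_r$-saturated graph $G$ on $r$ vertices must, upon being added, complete the full $K_r$, forcing every other pair to be adjacent; hence $G=K_r-e$, and the two endpoints of the missing edge have the same open neighbourhood (the other $r-2$ vertices) and are twins. For $n=r+1$, I would take an arbitrary non-edge $uv$; since $G+uv$ contains $K_r$, the common neighbourhood $N(u)\cap N(v)$ contains a $K_{r-2}$, and a short counting argument using $K_r$-freeness shows $|N(u)\cap N(v)|$ is exactly $r-2$. Analysing the non-edges involving the one remaining vertex $w$ forces $w$ to be non-adjacent to both $u$ and $v$ yet adjacent to every vertex of the $K_{r-2}$, pinning $G$ down to $K_{r-2}+\overline{K_3}$, whose three independent vertices are pairwise twins. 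For $r=3$ and $n\in\{6,7\}$, I would use the equivalence that a $K_3$-saturated graph is exactly a triangle-free graph of diameter at most $2$, and run a finite case analysis on the maximum degree $\Delta(G)$: at every value of $\Delta$, the combined constraints of triangle-freeness and diameter $2$ leave only a handful of structures, each with a visible twin pair.

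For the construction part, $K_n$ trivially works when $n\le r-1$. For $r=3$ and $n=5$, the cycle $C_5$ is $K_3$-saturated and twin-free. For $r\ge 4$ and $n=r+2$, the join $K_{r-3}+C_5$ is $K_r$-saturated (any added chord of $C_5$ creates a triangle, which together with $K_{r-3}$ is a $K_r$) and is twin-free because $C_5$ is twin-free and the vertices of $K_{r-3}$ each exclude a different vertex from their open neighbourhood. For $r=4$ and $n=7$, I would verify that $\overline{C_7}$ is twin-free by cyclic symmetry, $K_4$-free (since $\omega(\overline{C_7})=\alpha(C_7)=3$), and $K_4$-saturated (adding any cycle edge $v_iv_{i+1}$ produces a $K_4$ on, e.g., $\{v_1,v_2,v_4,v_6\}$). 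For $r=4$ and $n=8$, I would extend $\overline{C_7}$ by a new vertex adjacent to a carefully chosen $4$-set, such as $\{v_1,v_2,v_4,v_5\}$, that is triangle-free in $\overline{C_7}$ and whose resulting neighbourhood differs from every existing $N(v_j)$. For $r\ge 5$ and $n\in\{r+3,r+4\}$, the join $K_{r-4}$ with the corresponding 7- or 8-vertex $K_4$-saturated twin-free graph handles these cases.

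The remaining regime is $r=3$ with $n\ge 8$, from which the $r\ge 4$, $n\ge r+5$ cases follow via $K_{r-3}+H$. For $n=8$, the Wagner graph $C_8+\{v_iv_{i+4}\}$ is $3$-regular, triangle-free, of diameter $2$, and twin-free. For larger $n$, I would proceed inductively: given a twin-free triangle-free diameter-$2$ graph $G$ on $n$ vertices, attach a new vertex $w$ whose neighbourhood $S\subseteq V(G)$ is independent (preserving triangle-freeness), dominates $V(G)\setminus S$ (preserving diameter $2$), and is not the neighbourhood of any existing vertex (preserving twin-freeness). Seeding this induction with explicit base constructions at $n\in\{8,9,10,11\}$, checking that the inductive step always goes through, covers all $n\ge 8$.

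The main obstacle is twofold. First, the small-case analysis for $r=3$ and $n\in\{6,7\}$ in the non-existence part requires carefully ruling out every triangle-free diameter-$2$ graph on $6$ or $7$ vertices. Second, the construction for $r=3$ and $n\ge 8$ requires verifying the existence of a suitable attachment set $S$ at every step of the induction, which needs the small base graphs to be chosen with care so that the invariants survive; once both are in hand, the joins with $K_{r-3}$ and $K_{r-4}$ propagate everything to the remaining $(r,n)$ cleanly.
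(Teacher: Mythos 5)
Your decomposition into non-existence and construction, and most of the individual cases, closely parallel the paper's actual proof, so for those parts the approach matches. However, there are two issues worth flagging.

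First, a minor one: in the $n=r+1$ non-existence case you claim the analysis pins $G$ down to the unique graph $K_{r-2}+\overline{K_3}$. This is not quite right --- there are two $K_r$-saturated graphs on $r+1$ vertices, obtained from $K_{r-1}$ by blowing up one vertex by $3$ (your graph) or by blowing up two vertices by $2$ (giving $K_{r-3}+C_4$). Both have twins, so the conclusion survives, but the claimed uniqueness is false and a complete argument would need to identify both.

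Second, and this is the genuine gap: your handling of $r=3$, $n$ large relies on an induction ``attach a new vertex $w$ whose neighbourhood $S$ is a maximal independent set of $G$ not equal to any existing $N(v)$.'' You never establish that such an $S$ exists at every step. This is exactly the crux: a twin-free triangle-free diameter-$2$ graph on $n$ vertices already realises $n$ pairwise distinct maximal independent sets as vertex neighbourhoods, so you would need to argue that the total count of maximal independent sets strictly exceeds $n$ for every graph that can arise in your induction. That is not obviously true and is not something your base cases up to $n=11$ can certify inductively; you yourself flag it as ``the main obstacle'' without resolving it. The paper sidesteps this entirely with a single explicit construction for all $n\geq 9$ based on a set $S\subseteq\{0,1\}^k$ that shatters every coordinate pair, together with the Petersen graph for the one sporadic value $n=10$, then adds conical vertices for $r\geq 4$. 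Until you either prove the existence of the required attachment set at every step or replace the induction with an explicit family as the paper does, the proposal is incomplete for the infinite tail $r=3$, $n$ large.
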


\newtheorem*{repthex}{Theorem \ref{existence}}

Note the trivial lower bound $\tsat(n,K_r) \geq \sat(n,K_r)=(r-2)n+o(n)$. We improve upon this bound and obtain an upper bound. \newline

\begin{theorem}\label{twin free result}
We have the following estimates for $\tsat(n,K_r)$.
\begin{enumerate}
\item For all integers $r \geq 3$,
$$\left(r+2\right)n + o(n) \leq \tsat(n,K_r) \leq (r+3)n+o(n) \ .$$
\item In the special case $r=3$, we obtain a stronger lower bound.
$$\left(5 + \frac{2}{3}\right)n + o(n) \leq \tsat(n,K_3) \leq 6n+o(n) \ .$$
\end{enumerate}
\end{theorem}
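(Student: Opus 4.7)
For the upper bound, I would give an explicit construction of a twin-free $K_r$-saturated graph $G_n$ on $n$ vertices with at most $(r+3)n + o(n)$ edges. A natural approach is to take a fixed twin-free $K_r$-saturated ``seed'' on a constant number of vertices (whose existence is guaranteed by Theorem~\ref{existence}) and extend it by attaching new vertices in controlled batches, each new vertex being joined to a carefully chosen subset of the old vertices so as to simultaneously preserve $K_r$-freeness, maintain saturation, and distinguish the new vertex from every existing one via at least one non-neighbour. For the sharper bound $6n + o(n)$ in the case $r = 3$, I expect the optimal construction to be a triangle-free ``gadget union'' of constant maximum degree, linked through a small fixed core so as to force saturation globally.

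The lower bound rests on the following structural observation: in any $K_r$-saturated graph $G$ and every non-universal vertex $v$, the neighbourhood $N(v)$ is a maximal $K_{r-1}$-free induced subgraph of $G$. Indeed $G[N(v)]$ is $K_{r-1}$-free (else $v$ completes a forbidden $K_r$), and for every $w \in V(G) \setminus N[v]$ the edge $vw$ must lie in a potential $K_r$, forcing a $K_{r-2}$ in $N(v) \cap N(w)$ and hence a $K_{r-1}$ in $G[N(v) \cup \{w\}]$; applying this to any one non-neighbour $w$ of $v$ also shows $N(v)$ contains a $K_{r-2}$, so $v$ itself cannot be added either. In the twin-free setting the map $v \mapsto N(v)$ is injective, so $G$ contains at least $n$ distinct maximal $K_{r-1}$-free induced subgraphs. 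A short preliminary analysis additionally rules out vertices of degree exactly $r-2$ in any sufficiently large twin-free $K_r$-saturated graph (such a vertex forces its $r-2$ neighbours to be universal, which in turn creates a large twin class among the remaining vertices), so we may assume $\delta(G) \geq r-1$.

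The main reduction is then to bound the number of low-degree vertices of $G$. Let $L = \{v \in V(G) : \deg(v) \leq 2(r+2) - 1\}$; each $v \in L$ corresponds to a distinct maximal $K_{r-1}$-free induced subgraph of $G$ of bounded size. I would prove in parallel that any $K_r$-free graph on $n$ vertices contains only $o(n)$ maximal $K_{r-1}$-free induced subgraphs of any fixed bounded size; this is essentially the auxiliary ``maximise the number of maximally independent sets of a given size'' problem advertised in the abstract, and I would develop it alongside the main theorem. Given this count, $|L| = o(n)$, and summing degrees yields $2|E(G)| \geq 2(r+2)(n - |L|) + (r-1)|L|$, hence $|E(G)| \geq (r+2)n + o(n)$. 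For the sharper bound in the case $r = 3$, I would partition $L$ by exact degree and apply a refined per-degree estimate on the number of small maximal independent sets in a triangle-free graph, squeezing out the extra $(2/3)n$ from the degree sum.

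The main obstacle throughout is precisely this counting step: showing that in a $K_r$-free host graph on $n$ vertices only sublinearly many maximal $K_{r-1}$-free induced subgraphs of any fixed bounded size can arise, and obtaining the quantitatively sharper version needed for the constant $17/3$ when $r = 3$. I expect to handle this by a stability-type argument showing that a small maximal $K_{r-1}$-free induced subgraph $S$ of size $k$ imposes strong combinatorial constraints on how the remaining $n - k$ vertices of the host attach to it -- each must extend $G[S]$ into a $K_{r-1}$ via its common neighbourhood with $S$ -- and these constraints leave only $o(n)$ such $S$ possible.
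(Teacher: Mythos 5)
Your lower bound argument has a genuine gap at the counting step, and the quantity you are trying to bound does not behave the way you need it to. Your key intermediate claim is that a $K_r$-free graph on $n$ vertices contains only $o(n)$ distinct maximal $K_{r-1}$-free induced subgraphs of any fixed bounded size, so that $L = \{v : \deg(v) \leq 2(r+2)-1\}$ has size $o(n)$. This is false: already for $r=3$, there are triangle-free graphs on $m$ vertices with $\Theta\bigl(m^{5/2}\bigr)$ distinct maximal independent sets of size $5$ (the graph $H_{5,l}$ in the paper's Construction~\ref{system construction} is such an example). Worse, the very construction achieving the upper bound $6n + o(n)$ has all but $O(n^{2/5})$ of its vertices of degree exactly $6 \leq 2(3+2)-1$, so in the conjecturally extremal graph $|L| = n - o(n)$, not $o(n)$. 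Your degree-sum inequality $2e(G) \geq 2(r+2)(n - |L|) + (r-1)|L|$ then yields nothing. There is also a structural reason the degree-sum framing cannot work: you need $(r+2)n$ edges but minimum degree is only about $r+2$, so a naive degree sum is short by a factor of $2$. You must count edges across a cut, not halve a degree sum.

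What is missing is the vertex-cover reduction that the paper uses. After dispensing with $e(G) = \omega(n)$ trivially, one invokes Theorem~4 of the cited companion paper~\cite{Mine} to obtain a vertex cover $C$ of size $m = O(n/\log n)$. The neighbourhoods of the $n-m$ vertices outside $C$ are distinct (twin-freeness), live entirely inside the small set $C$, and --- since those vertices form an independent set --- pairwise intersect in a copy of $K_{r-2}$. This intersecting-family condition is exactly what you do not have when you range over all low-degree vertices of $G$, and it is what makes the count of such neighbourhoods of size $t \leq r+1$ at most $O(m) = o(n)$. Because $e(C, V(G)\setminus C) = \sum_{v \notin C}\deg(v)$ with no factor of $2$, the degree-$\geq r+2$ vertices outside $C$ then deliver $(r+2)n + o(n)$ edges directly, giving part 1. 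The improved constant $17/3$ for $r=3$ does not come from counting degree-$5$ vertices (there can be $\Theta(n)$ of them); it comes from a separate lower bound on $e(G[C])$ in terms of the number of size-$5$ maximal independent neighbourhoods, which is the $(3,5)$-system estimate $e_{3,5}(s) \geq \tfrac{2}{3}s + o(s)$. Your upper bound sketch is too vague to assess as written, but I note that the paper does not build from constant-degree gadgets; the core graph $H_{5,l}'$ has vertices of unbounded degree, and the general-$r$ case is handled not by constructing afresh but by adding $r-3$ conical vertices to the $r=3$ example.
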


\newtheorem*{repth12}{Theorem \ref{twin free result}}

The author conjectures that the upper bound is tight. \newline

One can also combine the twin-free and minimum degree conditions. The naive approach would be to define $\tsat(n,K_r,t)$ for integers $r \geq 3$, $t \geq r-2$ and large enough integers $n$ to be the minimum number of edges in a twin-free $K_r$-saturated graph $G$ on $n$ vertices with $\delta(G) \geq t$. We now explain why a more complicated definition is in fact more natural. \newline

A blow-up of a graph $H$ is a graph $G$ obtained by replacing each vertex of $H$ with some positive number of copies. Two copies in $G$ are adjacent if and only if the vertices in $H$ they are copies of are adjacent (in particular copies of the same vertex are non-adjacent). Note that any two copies in $G$ of a vertex in $H$ are twins. It is easy to check that $G$ is $K_r$-saturated if and only if $H$ is $K_r$-saturated, unless $H$ is a clique on at most $r-2$ vertices and $G$ is a proper blow-up of $H$. An equivalent definition of $\tsat(n,K_r)$ is the minimum number of edges in a $K_r$-saturated graph $G$ on $n$ vertices that is minimal in the sense that it cannot be obtained from a smaller $K_r$-saturated graph $H$ in this way. \newline

Note that $ \delta(G) \geq \delta(H)$. Hence, given integers $r \geq 3$ and $t \geq r-2$ and a $K_r$-saturated graph $H$ with $\delta(H) \geq t$, we can generate other $K_r$-saturated graphs $G$ with $\delta(G) \geq t$ by taking blow-ups of $H$ (the conditions $\delta(H) \geq t \geq r-2$ rule out the trivial exceptions). In \cite{Mine}, we saw that $\sat(n,K_r,t)=tn+O(1)$ (see page 1) and that all approximately extremal graphs are obtained in this way (see Theorem 2). More precisely, Theorem 2 is equivalent to the statement that every $K_r$-saturated graph $G$ on $n$ vertices with $\delta(G) \geq t$ and $e(G)=tn+O(1)$ is a blow-up of a $K_r$-saturated graph $H$ with $\delta(H) \geq t$ and $|H|=O(1)$. \newline

This leads us to define $\tsat(n,K_r,t)$ for integers $r \geq 3$, $t \geq r-2$ and large enough integers $n$ to be the minimum number of edges in a $K_r$-saturated graph $G$ on $n$ vertices with $\delta(G) \geq t$ that is minimal in the sense that it cannot be obtained by blowing up a smaller $K_r$-saturated graph $H$ with $\delta(H) \geq t$. Equivalently, $\tsat(n,K_r,t)$ is the minimum number of edges in a $K_r$-saturated graph $G$ on $n$ vertices with $\delta(G) \geq t$ in which every pair of twins is adjacent to a vertex of degree $t$. It is again not immediately clear whether such graphs even exist, but we will see later that they do for large enough $n$. \newline

Note that Theorem 2 in \cite{Mine} is equivalent to the statement that $\tsat(n,K_r,t)=tn+\omega(1)$. We show that for $t$ large enough relative to $r$, the $\omega(1)$ term tends to infinity like a sublinear power of $n$. While we do not determine the exact power, we show that is is roughly $1/(t-r)$.

\begin{theorem}\label{twin free min deg result}
We have the following estimates for $\tsat(n,K_r,t)$.
\begin{enumerate}
\item For all integers $r \geq 3$ and $t \geq r+3$, 
$$tn + \Omega\left(n^{1/(t-r+2)} \right) \leq \tsat(n,K_r,t) \leq tn+ O\left( n^{4/(t-r+2) } \right)  \ . $$
\item For all integers $t \geq 6$, 
$$tn + \Omega\left(n^{2/(t-3)} \right) \leq \tsat(n,K_3,t) \leq tn + O\left(n^{4/(t-1)}\right)  \ . $$
\item For all integers $r \geq 3$ and $r-2 \leq t \leq r+2$,
$$\left(r+2\right)n + o(n) \leq \tsat(n,K_r,t) \leq (r+3)n+o(n) \ .$$
\item For all integers $1 \leq t \leq 5$,
$$\left(5 + \frac{2}{3}\right)n + o(n) \leq \tsat(n,K_3,t) \leq 6n+o(n) \ .$$
\end{enumerate}
\end{theorem}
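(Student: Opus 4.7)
The plan is to handle parts 3 and 4 as consequences of Theorem \ref{twin free result}, and to prove parts 1 and 2 by reducing to the auxiliary maximally independent set problems advertised in the abstract.

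For parts 3 and 4, the upper bounds are inherited from Theorem \ref{twin free result}: the twin-free $K_r$-saturated graph producing $\tsat(n,K_r) \leq (r+3)n + o(n)$ has minimum degree at least $r+2$ (respectively $5$ when $r=3$), so for any $t \leq r+2$ (resp.\ $t \leq 5$) it already satisfies $\delta \geq t$, and being twin-free it trivially satisfies the twin-witness condition. For the lower bounds, I would revisit the lower-bound proof of Theorem \ref{twin free result} with the twin-free hypothesis replaced by the twin-witness one. The key structural fact that remains available is a bound on twin-class sizes: given two twins $u_1,u_2$, any witness $w$ of degree $t$ lies in $N(u_1)=N(u_2)$, and since all members of the twin class share this common neighbourhood the whole class sits inside $N(w)$ and so has at most $t$ elements. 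This is enough to preserve the counting in the earlier proof up to constants absorbed into the $o(n)$ term.

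For parts 1 and 2, the lower bound is the heart of the matter. Let $G$ be $K_r$-saturated with $\delta(G) \geq t$ and the twin-witness property. A standard degree-counting argument shows that if $e(G)$ is close to the minimum then all but a negligible fraction of vertices have degree exactly $t$. For each such $v$, $K_r$-saturation forces $G[N(v)]$ to contain no $K_{r-1}$ while every vertex outside $N[v]$ shares a $K_{r-2}$ with $v$. In the case $r=3$ this is precisely the statement that $N(v)$ is a maximally independent set of size $t$ in $G$, and the twin-class size bound $\leq t$ obtained above forces the collection $\{N(v):d(v)=t\}$ to realise $\Omega(n)$ distinct maximally independent sets of size $t$. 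The exponent $2/(t-3)$ then emerges from a separately established sharp lower bound on the minimum number of edges in a $K_3$-free graph with a prescribed number of maximally independent sets of a given size. The same template, with a suitable $K_{r-1}$-saturated-type structure in $G[N(v)]$ replacing the MIS, yields the exponent $1/(t-r+2)$ for general $r$.

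For the upper bounds in parts 1 and 2, I would produce explicit sparse constructions along the template of a hub clique $K_{r-2}$ joined completely to a host graph $H$ in which nearly all vertices have $H$-degree exactly $t-(r-2)$, with pairwise distinct neighbourhoods apart from a few twin classes witnessed internally. The exponent $4/(t-r+2)$ matches a K\H{o}v\'ari--S\'os--Tur\'an or Zarankiewicz-type incidence construction, where sparse bipartite graphs realising many independent sets of a prescribed size pack the candidate neighbourhoods with controlled edge count. The main obstacle is carrying out both directions of the reduction to the auxiliary maximally independent set extremal problems with fine enough quantitative control to match the stated exponents; the remaining gap between the lower- and upper-bound exponents reflects genuine looseness in this trade-off and would be the natural target for future refinement.
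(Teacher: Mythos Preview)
Your treatment of parts 3 and 4 is essentially correct and matches the paper's approach. The upper bounds do come from the Theorem \ref{twin free result} construction (which in fact has $\delta(G)=6$ for $r=3$, so even $t\le 6$ would be covered there), and the lower bounds do come from rerunning the Theorem \ref{twin free result} argument with a twin-class-size bound replacing twin-freeness. The paper phrases this slightly differently---it absorbs the vertices outside the cover that have twins into the cover, using that each is adjacent to a degree-$t$ witness which must lie in the cover---but your twin-class-size $\le t$ observation is equivalent and perhaps cleaner.

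The genuine gap is in your lower-bound sketch for parts 1 and 2. You correctly note that the neighbourhoods of degree-$t$ vertices are maximally $K_{r-1}$-free sets of size $t$, and that there are $\Omega(n)$ distinct ones. But you then invoke ``a separately established sharp lower bound on the minimum number of edges in a $K_3$-free graph with a prescribed number of maximally independent sets of a given size'' without saying \emph{which} $K_3$-free graph. If you apply it to $G$ itself, you only get $e(G)\gg n^{2/(t-3)}$, which is vastly weaker than the target $e(G)\ge tn+\Omega(n^{2/(t-3)})$; the $tn$ term already swamps it. What is needed is to locate a small subgraph $H$ (of size $o(n)$) that contains all these neighbourhoods, so that the edge bound applied to $H$ yields surplus edges on top of the $tn$ coming from the independent degree-$t$ vertices outside $H$. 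This decomposition is not automatic from ``most vertices have degree $t$'': a priori a degree-$t$ vertex could have other degree-$t$ vertices among its neighbours, and there need be no small core at all. The paper obtains this via a non-trivial structural result (Lemma 4 in \cite{Mine}, packaged as Theorem \ref{connection with edges}) that produces a bounded seed set $S$ and then the core $T=S\cup\{v:|\Gamma(v)\cap S|>t\}$; showing that the residual vertices have their neighbourhoods entirely inside $T$ takes real work. Your sketch does not supply a substitute for this step.

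Your upper-bound plan for parts 1 and 2 is in the right spirit but vague. The paper's route is again through Theorem \ref{connection with edges}: it builds the graph from an optimal maximal $(r,t)$-system, and the construction achieving the exponent $4/(t-r+2)$ is an explicit cyclic product (Construction \ref{system construction}, part 4) rather than a K\H{o}v\'ari--S\'os--Tur\'an-type incidence graph. A hub $K_{r-2}$ is indeed how one passes from $r=3$ to general $r$ (this is Lemma \ref{twin-free-conical-min-degree}), but the host graph has to be built carefully to be simultaneously maximal and have controlled twin structure; the paper spends a page on this (Lemma \ref{clean up lemma} and the proof of the upper bound in Theorem \ref{connection with edges}).
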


\newtheorem*{repth16}{Theorem \ref{twin free min deg result}}

Note that we again obtain stronger lower bounds in the special case $r=3$ and that for $t \leq r+2$ we obtain the same estimates for $\tsat(n,K_r,t)$ as for $\tsat(n,K_r)$ (see Theorem \ref{twin free result}). The author again conjectures that the upper bounds are tight. \newline

The rest of the paper is organised as follows. In section \ref{systems section} we first sketch the proof of the lower bound in part 2 of Theorem \ref{twin free result}. This then motivates the definition of $r$-systems and variants thereof, which are certain structures that arise naturally in the sketch proof. We then define certain extremal quantities in terms of these $r$-systems. Finally, we state several results providing estimates for these quantities and establishing relationships between them and between them and $\tsat(n,K_r,t)$. \newline

In section \ref{conical vertices second section} we first use conical vertices to obtain inequalities for $\tsat(n,K_r)$, $\tsat(n,K_r,t)$ and the quantities defined in section \ref{systems section}, which we will use in section \ref{twin free proof section} to prove Theorems \ref{twin free result} and \ref{twin free min deg result} and the results stated in section \ref{systems section}. We then state several conjectures regarding conical vertices and discuss the evidence for these conjectures. Next, in section \ref{twin free proof section}, we prove Theorems \ref{existence}, \ref{twin free result} and \ref{twin free min deg result} and the results stated in section \ref{systems section}. Finally, in section \ref{Open problems}, we state several open problems.

\section{Systems}\label{systems section}

In this section we first sketch the proof of the lower bound in part 2 of Theorem \ref{twin free result}. This then motivates the definition of $r$-systems and variants thereof, which are certain structures that arise naturally in the sketch proof. We then define certain extremal quantities in terms of these $r$-systems. Finally, we state several results providing estimates for these quantities and establishing relationships between them and between them and $\tsat(n,K_r,t)$.  \newline

We first sketch the proof of the lower bound in part 2 of Theorem \ref{twin free result}. Let $G$ be a twin-free $K_3$-saturated graph on $n$ vertices. We need to show that $e(G) \geq \left(5 + 2/3 \right)n + o(n)$. If $e(G)=\omega(n)$, there is nothing to prove, so we may assume $e(G)=O(n)$. Then by Theorem 4 in \cite{Mine}, $G$ has a vertex cover (a set of vertices intersecting every edge) $C$ of size $m=O(n/\log n)$. Let $H=G[C]$ be the subgraph induced by $C$ and $\mathcal{F}=\{\Gamma(v) : v \not \in C\}$ be the family of neighbourhoods of the vertices outside $C$. Note that the elements of $\mathcal{F}$ are sets of vertices in $H$ and that every vertex $v \not \in C$ gives a different element of $\mathcal{F}$, since $G$ is twin-free.  \newline

Since $G$ is $K_3$-free, $H$ is $K_3$-free and the sets in $\mathcal{F}$ are independent. Since $G$ is $K_3$-saturated, the sets in $\mathcal{F}$ are in fact maximally independent. Finally, since the vertices outside $C$ are non-adjacent and $G$ is $K_3$-saturated, $\mathcal{F}$ is an intersecting family. \newline

For each integer $t \geq 0$, let $\mathcal{F}_t=\{S \in \mathcal{F} : |S|=t \}$. We will show that for all integers $0 \leq t \leq 4$, $K_3$-free graphs $H$ with $|H|=m$ and intersecting families $\mathcal{F}$ of maximally independent sets of vertices in $H$ of size $t$, we have $|\mathcal{F}|=O(m)$. We will also show that for all $K_3$-free graphs $H$ with $|H|=m$ and intersecting families $\mathcal{F}$ of maximally independent sets of vertices in $H$ of size $5$, we have $e(H) \geq 2|\mathcal{F}|/3+o\left(|\mathcal{F}|\right)$. Hence $|\mathcal{F}_t|=O(n/\log n)$ for every integer $0 \leq t \leq 4$ and $e(H) \geq 2|\mathcal{F}_5|/3+o(n)$, from which it easily follows that $e(G) \geq \left(5 + 2/3 \right)n + o(n)$. \newline

We now define $r$-systems and variants thereof, which are generalisations to all integers $r \geq 3$ of the structures that arose naturally in our sketch proof. Suppose we have a graph $G$ and a vertex cover $C$ of $G$. Let $H=G[C]$ be the subgraph induced by $C$ and $\mathcal{F}=\{\Gamma(v) : v \not \in C\}$ be the family of neighbourhoods of the vertices outside $C$, as in the sketch proof. We no longer require $G$ to be twin-free, so we think of $\mathcal{F}$ as a family with `multiplicities' in the sense that for twins $v$ and $w$ outside $C$ we consider $\Gamma(v)$ and $\Gamma(w)$ to be distinct elements of $\mathcal{F}$ even though they are the same set. Note that the elements of $\mathcal{F}$ are again sets of vertices in $H$ and that $G$ is completely determined by $H$ and $\mathcal{F}$. Let us write $G(H,\mathcal{F})$ for this graph $G$. It is then easy to work out the conditions on $H$ and $\mathcal{F}$ for $G(H,\mathcal{F})$ to be $K_r$-saturated, which leads to the following definitions and observation. \newline

\begin{definition}\label{r system}
For an integer $r \geq 3$, an \emph{$r$-system} is a pair $(H,\mathcal{F})$, where $H$ is a graph and $\mathcal{F}$ is a family of sets of vertices in $H$ with multiplicities, with the following properties.

\begin{enumerate}
\item $H$ is $K_r$-free.
\item Every set $S \in \mathcal{F}$ is maximally $K_{r-1}$-free, meaning $H[S]$ is $K_{r-1}$-free but $H[S \cup \{v\}]$ contains $K_{r-1}$ for every vertex $v \not \in S$.
\item $H[S \cap T]$ contains $K_{r-2}$ for all $S \neq T \in \mathcal{F}$. 
\end{enumerate}

\end{definition}

\begin{definition}\label{maximal r system}
For an integer $r \geq 3$, a \emph{maximal $r$-system} is an $r$-system $(H,\mathcal{F})$ such that for every missing edge $e$ in $H$, either $H+e$ contains $K_r$ or $(H+e)[S]$ contains $K_{r-1}$ for some $S \in \mathcal{F}$.

\end{definition}

\begin{obs}\label{obs 1}
$G(H,\mathcal{F})$ is $K_r$-saturated if and only if $(H,\mathcal{F})$ is a maximal $r$-system.
\end{obs}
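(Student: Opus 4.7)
The plan is to verify the equivalence by classifying the non-edges of $G := G(H,\mathcal{F})$ into three types and matching each of the conditions of Definition \ref{r system} (together with the maximal condition of Definition \ref{maximal r system}) with exactly one of these types. Writing $v_S$ for the vertex outside $C$ with $\Gamma(v_S)=S$, the vertex set of $G$ partitions as $V(H) \cup \{v_S : S \in \mathcal{F}\}$, and since the $v_S$'s are pairwise non-adjacent by construction, any clique in $G$ contains at most one $v_S$. The non-edges of $G$ are therefore of three types: (a) pairs $\{u,w\}$ inside $V(H)$; (b) pairs $\{u,v_S\}$ with $u \in V(H) \setminus S$; and (c) pairs $\{v_S,v_T\}$ with $S \neq T$.

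First I would handle $K_r$-freeness. Since any $K_r$ in $G$ uses at most one external vertex $v_S$, a $K_r$ in $G$ either lies entirely in $H$ or is of the form $\{v_S\} \cup K$, where $K$ is a $K_{r-1}$ in $H[S]$. Hence $G$ is $K_r$-free if and only if $H$ is $K_r$-free (condition 1 of Definition \ref{r system}) and $H[S]$ is $K_{r-1}$-free for each $S \in \mathcal{F}$ (the first half of condition 2).

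Next I would go through the three types of non-edges. For type (c), adding $\{v_S,v_T\}$ creates a $K_r$ if and only if some $r-2$ vertices of $S \cap T$ form a $K_{r-2}$ in $H$, matching condition 3 exactly. For type (b), any new $K_r$ must use the added edge, hence must consist of $v_S$, $u$, and $r-2$ further vertices in $S$ forming (together with $u$) a $K_{r-1}$ in $H[S \cup \{u\}]$; since $H[S]$ is already $K_{r-1}$-free, this is equivalent to $H[S \cup \{u\}]$ containing $K_{r-1}$, which is the second half of condition 2 quantified over all $u \notin S$. Finally, for type (a), a new $K_r$ created by adding $e = \{u,w\}$ either lies in $H+e$ or uses some $v_T$, the latter forcing $(H+e)[T]$ to contain $K_{r-1}$ (with $u,w \in T$); this is precisely the maximality condition of Definition \ref{maximal r system}.

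Putting these three case analyses together yields that $G$ is $K_r$-free and every added edge creates a $K_r$ if and only if $(H,\mathcal{F})$ satisfies conditions 1--3 and the maximality condition, which is exactly the assertion of the observation. The only mildly delicate point, and really the only thing to be careful about, is the direction ``non-edges of type (b) force condition 2'': one must note that on adding $\{u,v_S\}$ the resulting $K_r$ is necessarily distinct from any pre-existing $K_r$ in $G$, hence uses the new edge, so that the $r-2$ additional vertices are forced to lie in $S$ and together with $u$ to form a $K_{r-1}$ in $H$. Everything else is a straightforward matching of clauses.
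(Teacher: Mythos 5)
Your proof is correct and is exactly the natural case analysis the paper has in mind when it states the observation without proof (the paper just says it is ``easy to work out the conditions''). The key structural facts you use — that the external vertices $v_S$ are pairwise non-adjacent so any clique in $G$ meets $\{v_S : S \in \mathcal{F}\}$ in at most one vertex, that $K_r$-freeness of $G$ is equivalent to condition 1 together with the first half of condition 2, and that once $G$ is known to be $K_r$-free every new $K_r$ created by adding a missing edge must use that edge — are precisely what is needed, and your three-way classification of missing edges lines up cleanly with condition 2, condition 3, and the maximality condition respectively.
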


Note that if $(H,\mathcal{F})$ is an $r$-system, we can add edges to $H$ to obtain a graph $H'$ such that $(H',\mathcal{F})$ is a maximal $r$-system.

\vspace{0.5cm}

Recall that in the sketch proof, we partitioned $\mathcal{F}$ into parts $\mathcal{F}_t$ depending on the sizes of the sets. This motivates the following definition.

\begin{definition}\label{r t system}
For integers $r \geq 3$ and $t \geq r-2$, an \emph{$(r,t)$-system} or \emph{maximal $(r,t)$-system} is an $r$-system or maximal $r$-system $(H,\mathcal{F})$, respectively, such that $\mathcal{F}$ has no repeated elements and $|S|=t$ for all $S \in \mathcal{F}$.
\end{definition}

\vspace{0.5cm}

Recall also that in the sketch proof, we used an upper bound on $|\mathcal{F}|$ in terms of $|H|$ and a lower bound on $e(H)$ in terms of $|\mathcal{F}|$ for $(3,t)$-systems $(H,\mathcal{F})$. This motivates the following definitions.

\begin{definition}\label{set function definition}
For integers $r \geq 3$, $t \geq r-2$ and $m \geq 0$, let $s_{r,t}(m)$ be the maximum of $|\mathcal{F}|$ over all $(r,t)$-systems - or equivalently, over all maximal $(r,t)$-systems - $(H,\mathcal{F})$ with $|H|=m$.
\end{definition}

\begin{definition}\label{edge function definition}
For integers $r \geq 3$, $t \geq r$ and $s \geq 0$, let $e_{r,t}(s)$ or $e_{r,t}'(s)$ be the minimum of $e(H)$ over all $(r,t)$-systems or maximal $(r,t)$-systems $(H,\mathcal{F})$, respectively, with $|\mathcal{F}|=s$.
\end{definition}

It is easy to see that if $(H,\mathcal{F})$ is an $(r,t)$-system and $\mathcal{F}' \subseteq \mathcal{F}$ is a subfamily, then $(H,\mathcal{F}')$ is also an $(r,t)$-system. Hence $e_{r,t}(s)$ is an increasing function of $s$. As we shall see shortly, for all fixed $r$ and $t$ with $t \geq r$, $s_{r,t}(m)$ tends to infinity as $m$ tends to infinity. Hence, for all integers $r \geq 3$, $t \geq r$ and $s \geq 0$, there exist $(r,t)$-systems - and therefore also maximal $(r,t)$-systems - $(H,\mathcal{F})$ with $|\mathcal{F}|=s$, so $e_{r,t}(s)$ and $e_{r,t}'(s)$ are well-defined. For $t<r$, $s_{r,t}(m)$ is bounded, so we cannot define $e_{r,t}(s)$ and $e_{r,t}'(s)$.

\vspace{0.5cm}

Note that $e_{r,t}(s) \leq e_{r,t}'(s)$. The author conjectures that for fixed $r$ and $t$ and large $s$, $e_{r,t}(s)$ and $e_{r,t}'(s)$ have the same order of magnitude.

\begin{conj}\label{conj 4}
For all integers $r \geq 3$ and $t \geq r$, $e_{r,t}(s) \asymp e_{r,t}'(s)$.
\end{conj}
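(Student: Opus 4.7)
The direction $e_{r,t}(s) \leq e'_{r,t}(s)$ is immediate from the definitions, since every maximal $(r,t)$-system is an $(r,t)$-system. The content of the conjecture is therefore the reverse bound $e'_{r,t}(s) \leq C(r,t) \cdot e_{r,t}(s)$, and my plan is to take an extremal $(r,t)$-system and extend it to a maximal one without inflating the edge count by more than a constant factor.

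Fix an $(r,t)$-system $(H, \mathcal{F})$ with $|\mathcal{F}| = s$ and $e(H) = e_{r,t}(s)$. Pick any $S_0 \in \mathcal{F}$. Because $S_0$ is maximally $K_{r-1}$-free in $H$, every vertex $v \notin S_0$ has at least $r-2$ neighbours in $S_0$ (forming a $K_{r-2}$), so $|V(H)| \leq t + 2e(H)/(r-2) = O(e(H))$ with constants depending on $r$ and $t$. I would then extend $H$ greedily: at each step add any non-edge $uv$ such that $H+uv$ is still $K_r$-free and $(H+uv)[S]$ is still $K_{r-1}$-free for every $S \in \mathcal{F}$ containing both $u$ and $v$. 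The remaining axioms of an $(r,t)$-system (each vertex outside $S$ completes a $K_{r-1}$ in $S$; each pair of sets intersects in a $K_{r-2}$) are monotone in $H$ and are preserved, so the process terminates at a maximal $(r,t)$-system $(H^\ast, \mathcal{F})$ with the same $\mathcal{F}$.

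The crux is to show $e(H^\ast) \leq C(r,t) \cdot e(H)$, and this is where I expect the main obstacle. The trivial vertex-based estimate gives only $e(H^\ast) \leq \binom{|V(H)|}{2} = O(e(H)^2)$, which is too weak, so a structural argument is needed. My preferred attack is a double counting that charges each new edge $uv \in E(H^\ast) \setminus E(H)$ to a local certificate in $H$ (for instance an $(r-2)$-clique inside some $S \in \mathcal{F}$ responsible for the obstruction around $uv$) and then bounds the total charge by the number of such certificates, which is $O(e(H))$ directly for $r \in \{3,4\}$ and should be controllable for larger $r$ by combining Kruskal--Katona-type bounds on small cliques in a sparse graph with the vertex bound above. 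The main difficulty is that such a charging is not obviously injective, and making it so appears to require substantially exploiting the interaction between the $K_r$-freeness of $H^\ast$ and the $K_{r-1}$-freeness inside each of the $s$ sets. If this fails, I would instead try to construct a maximal $(r,t)$-system from scratch with $|\mathcal{F}| = s$ and $O(e_{r,t}(s))$ edges, by augmenting the extremal $(r,t)$-system with a small universal gadget designed to simultaneously certify maximality at every non-edge with only a bounded edge overhead.
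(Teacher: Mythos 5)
The statement you are proving is labeled a conjecture in the paper, and the paper does not prove it; it only verifies special cases ($t=r$, $t=r+1$, and $r=3$, $t=5$) via parts 1, 2 and 5 of Theorem~\ref{edge estimates}, each time by pinning down the orders of magnitude of $e_{r,t}(s)$ and $e'_{r,t}(s)$ separately rather than by any extension argument. So there is no paper proof to compare against; a complete argument here would be a genuine new result.

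Your proposal correctly reduces the problem to bounding $e(H^*)-e(H)$ for the greedy completion $H^*$, and the setup is sound: the trivial inequality $e_{r,t}(s)\leq e'_{r,t}(s)$ is right, $|V(H)| \ll e(H)$ follows from Lemma~\ref{set to edge lemma}, and your edge-addition rule preserves all the $(r,t)$-system axioms (the clique-containment conditions in Definition~\ref{r system} are monotone under adding edges, and the $K_r$-freeness of $H$ and $K_{r-1}$-freeness of each $H[S]$ are enforced by your rule). But the step you yourself flag as the crux is a genuine gap, not a detail to be filled in: you offer a charging heuristic you concede is not obviously injective and say would only be direct for $r\in\{3,4\}$, and a fallback gadget construction that is not specified. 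The only bound you actually have is the trivial $e(H^*)\leq\binom{|V(H)|}{2}\ll e(H)^2$, off by a full factor of $e(H)$. There is no a priori reason the greedy closure of a sparse $(r,t)$-system cannot be dense: an extremal $(r,t)$-system may have many non-edges $uv$ for which neither $H+uv$ contains a $K_r$ nor $u$ and $v$ lie together in any $S\in\mathcal{F}$, and the process could be forced to add a number of edges quadratic in $|V(H)|$ before these are all resolved. Absent a structural reason that the cascade halts after $O(e(H))$ additions, the proposal does not establish the conjecture.
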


(We write $f \asymp g$ if $f=\Theta(g)$.) We will see some evidence for Conjecture \ref{conj 4} later. 

\vspace{0.5cm}

We obtain estimates for $s_{r,t}(m)$, which we will use in the proofs of Theorem \ref{twin free result} and parts 3 and 4 of Theorem \ref{twin free min deg result} and to prove estimates for $e_{r,t}(s)$ and $e_{r,t}'(s)$.

\begin{theorem}

We have the following estimates for $s_{r,t}(m)$.

\begin{enumerate}\label{set estimates}
\item For all integers $r \geq 3$, $s_{r,r-2}(m)=1$ for large enough $m$.
\item For all integers $r \geq 3$, $s_{r,r-1}(m)=2$ for large enough $m$.
\item For all integers $r \geq 3$, $s_{r,r}(m)=\lfloor (m-r+2)/2 \rfloor$ for large enough $m$.
\item For all integers $r \geq 3$, $s_{r,r+1}(m)=\Theta(m)$.
\item For all integers $r \geq 3$ and $t \geq r+2$,
$$m^{(t-r+2)/2} \ll s_{r,t}(m) \ll m^{t-r+2} \ . $$ 
\item For all integers $t \geq 5$,
$$m^{(t-1)/2} \ll s_{3,t}(m) \ll m^{t-3} \ . $$
\end{enumerate}

\end{theorem}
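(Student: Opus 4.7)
The plan is to handle parts 1--4 by direct structural analysis, and parts 5--6 by combining a counting upper bound with an explicit lower-bound construction. In the small-$t$ regime (parts 1--4), the set size is small enough that the pairwise intersection requirement $H[S \cap T] \supseteq K_{r-2}$ from Definition \ref{r system}(3) is highly restrictive. For $t = r-2$ it forces $S = T$, so $|\mathcal{F}| \leq 1$, realized by the Erd\H{o}s--Hajnal--Moon extremal graph. For $t = r-1$ and $t = r$, a case analysis of the possible isomorphism types of $H[S]$ combined with the intersection condition caps $|\mathcal{F}|$ at a constant or at $\lfloor (m-r+2)/2 \rfloor$ respectively, with the sharpness coming from chain-like or book-like constructions. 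For $t = r+1$ the same ideas, together with a chain of sets each differing from the next by a bounded number of vertices, yield $\Theta(m)$.

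For the upper bound in part 5, I would fix any $S \in \mathcal{F}$. Since $|S| = t$, the graph $H[S]$ contains at most $\binom{t}{r-2} = O(1)$ copies of $K_{r-2}$; by the intersecting condition every other $T \in \mathcal{F}$ must contain one of them, and for each fixed $K_{r-2}$-clique the number of $t$-sets extending it is at most $\binom{m-r+2}{t-r+2} = O(m^{t-r+2})$, giving $|\mathcal{F}| = O(m^{t-r+2})$. For the stronger upper bound $m^{t-3}$ in part 6, triangle-freeness helps: after fixing $v \in S$, any $T \in \mathcal{F}$ with $v \in T$ corresponds (after deleting $v$) to a $(t-1)$-element maximally independent set of the triangle-free graph $H - N[v]$, and since neighborhoods in triangle-free graphs are themselves independent, an additional fixing step (or an equivalent double-count exploiting the independence of neighborhoods) saves a further factor of $m^2$.

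For the lower bound constructions in parts 5 and 6, I would take a fixed $K_{r-2}$-clique $K$ in $H$ and require every set in $\mathcal{F}$ to contain $K$, so the intersecting condition holds automatically. Writing $d = t - r + 2$ (respectively $d = t-1$ when $r = 3$), each set is $K \cup P$ for a $d$-subset $P$ of the remaining vertices, and I need $H$ to be $K_r$-free (respectively triangle-free), to have no vertex outside $K$ adjacent to all of $K$, and to admit $\Omega(m^{d/2})$ petals $P$ for which $K \cup P$ is maximally $K_{r-1}$-free. This can be achieved by a bipartite incidence or algebraic (projective plane or Zarankiewicz type) construction, combined with a probabilistic alteration if needed to guarantee maximality. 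The main obstacle is precisely this construction: the $K_r$-freeness of $H$, the $K_{r-1}$-freeness and maximality of each $S \in \mathcal{F}$, and the abundance of petals all pull against one another, and the square-root gap with the upper bound appears to reflect a Zarankiewicz-type barrier between petal vertices and the labels of $K$. The upper bounds themselves are essentially mechanical once the $K_{r-2}$-intersection property is properly exploited.
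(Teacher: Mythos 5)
Your part 1 argument is a clean elementary alternative to the paper's: with $|S|=|T|=r-2$, the condition that $H[S\cap T]$ contain $K_{r-2}$ forces $|S\cap T|\ge r-2$, hence $S=T$, so $|\mathcal{F}|\le 1$; this is simpler than the paper's route through Observation~\ref{obs 1} and the structure of $K_r$-saturated graphs with degree-$(r-2)$ vertices. Your part~5 upper bound (fix $S$, note $H[S]$ contains only $\binom{t}{r-2}=O(1)$ copies of $K_{r-2}$, each extending to at most $\binom{m-r+2}{t-r+2}$ sets) is likewise a valid elementary substitute for the paper's appeal to Erd\H{o}s--Ko--Rado.

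However, the remaining parts contain genuine gaps that your sketch does not close. For part~3 you need the \emph{exact} value $\lfloor (m-r+2)/2\rfloor$ for large $m$, and ``a case analysis of the possible isomorphism types of $H[S]$'' with ``chain-like or book-like constructions'' is not an argument: the paper gets the upper bound from two nontrivial stability lemmas, Lemma~\ref{conical stability} (an $(r,r)$-system with $|\mathcal{F}|=\omega(1)$ has $r-3$ conical vertices) and Lemma~\ref{3 3 stability} (a $(3,3)$-system with $|\mathcal{F}|=\omega(1)$ and no unused vertices is exactly $(H'_{2,l},\mathcal{F}'_{2,l})$), and there is no obvious shortcut. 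Similarly, part~4 is not just ``the same ideas''; it needs Lemma~\ref{r r+1 stability} to reduce an $(r,r+1)$-system to a $(3,2)'$- or $(3,3)'$-system, and then the estimate $s'_{3,3}(m)=m+O(1)$, which is Theorem~\ref{many cases} with its four-case analysis. For part~6's upper bound your claim that ``an additional fixing step... exploiting the independence of neighborhoods saves a further factor of $m^2$'' is unsubstantiated. The natural fixing iteration only gives $s_{3,t+1}(m+1)\ll m\,s'_{3,t}(m)$ per step; the $m^{t-3}$ bound (rather than $m^{t-1}$) comes entirely from the strong base case $s_{3,4}(m)=\Theta(m)$, which again rests on the nontrivial $s'_{3,3}(m)=m+O(1)$. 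Triangle-freeness alone, in the form ``neighborhoods are independent,'' does not produce an extra $m^2$ saving. Finally, for the lower bounds in parts 5--6 you explicitly concede ``the main obstacle is precisely this construction'' and offer only a gesture toward ``projective plane or Zarankiewicz type'' objects with a ``probabilistic alteration.'' The paper's construction (Construction~\ref{system construction}, a blow-up of $C_t$ with diagonal sets, giving $|H_{t,l}|=tl^2$, $|\mathcal{F}_{t,l}|=l^t$ and genuine maximality of each set) is specific and not of Zarankiewicz type; maximality is a global condition that a probabilistic alteration would not obviously preserve. As written, the proposal establishes parts 1 and the part~5 upper bound, but the exact results in parts 3--4 and the full content of parts 5--6 remain unproved.
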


\newtheorem*{repth11}{Theorem \ref{set estimates}}

(We write $f \ll g$ if $f=O(g)$.) As we shall see later, the upper bound in part 5 is essentially trivial (see the proof of Theorem \ref{set estimates} in section \ref{twin free proof section}). The author conjectures that the lower bound is tight. Part 6 gives a better upper bound in the special case $r=3$.

\vspace{0.5cm}

We also obtain inequalities relating $s_{3,t}(m)$ and $s_{3,t+1}(m)$.

\begin{theorem}\label{set estimates 3 t to t+1 relation}
For all integers $t \geq 1$,
$$s_{3,t}(m) \ll s_{3,t+1}(m+1) \ll m \ s_{3,t}(m) \ . $$
\end{theorem}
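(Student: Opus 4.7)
The plan is to prove the two inequalities separately, using an explicit construction for the lower bound and a local double counting for the upper bound. For the lower inequality $s_{3,t}(m) \leq s_{3,t+1}(m+1)$, I would take an extremal $(3,t)$-system $(H, \mathcal{F})$ on $m$ vertices, adjoin a new isolated vertex $v$ to $H$ to obtain $H'$, and set $\mathcal{F}' = \{S \cup \{v\} : S \in \mathcal{F}\}$. Then $(H', \mathcal{F}')$ is a $(3,t+1)$-system on $m+1$ vertices of the same family size: $H'$ is $K_3$-free since $v$ is isolated, each $S \cup \{v\}$ is maximally independent in $H'$ (because $v$ is isolated and $S$ was already maximally independent in $H$), and the family is intersecting as $v$ lies in every set.

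For the upper inequality $s_{3,t+1}(m+1) \ll m \cdot s_{3,t}(m)$, I first observe that $s_{3,t}$ is non-decreasing in $m$: given any $(3,t)$-system $(H, \mathcal{F})$, pick any $S_0 \in \mathcal{F}$ and adjoin a new vertex $w$ adjacent to all of $S_0$ — independence of $S_0$ keeps the graph triangle-free, and the intersecting property of $\mathcal{F}$ forces $w$ to have a neighbour in every $S \in \mathcal{F}$, so each $S$ remains maximally independent. Now let $(H, \mathcal{F})$ be an extremal $(3,t+1)$-system on $m+1$ vertices. For $v \in V(H)$ set $V_0 = V(H) \setminus N_H[v]$ and $\mathcal{F}^v = \{S \in \mathcal{F} : v \in S\}$, and for $v' \in V_0$ set $\mathcal{F}^{v,v'} = \{S \in \mathcal{F}^v : v' \in S\}$. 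The key observation is that $(H[V_0], \{S \setminus \{v\} : S \in \mathcal{F}^{v,v'}\})$ is a $(3,t)$-system on $|V_0| \leq m$ vertices: all the $t$-sets contain $v'$ (so the family is intersecting), and for any $w \in V_0 \setminus S$ the neighbour of $w$ in $S$ guaranteed by maximality of $S$ in $H$ cannot be $v$ (since $w$ is non-adjacent to $v$) and hence lies in $S \setminus \{v\} \subseteq V_0$. Thus $|\mathcal{F}^{v,v'}| \leq s_{3,t}(m)$ by monotonicity, and double counting gives $t |\mathcal{F}^v| = \sum_{v' \in V_0} |\mathcal{F}^{v,v'}| \leq m \cdot s_{3,t}(m)$, so $|\mathcal{F}^v| \leq m \cdot s_{3,t}(m)/t$ for every $v \in V(H)$.

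To close the argument, fix any $S_0 \in \mathcal{F}$; the intersecting property of $\mathcal{F}$ gives $\sum_{v \in S_0} |\mathcal{F}^v| = \sum_{S \in \mathcal{F}} |S \cap S_0| \geq |\mathcal{F}|$, so some $v \in S_0$ satisfies $|\mathcal{F}^v| \geq |\mathcal{F}|/(t+1)$. Combining with the previous bound yields $|\mathcal{F}| \leq (t+1)m \cdot s_{3,t}(m)/t = O(m \cdot s_{3,t}(m))$. The main obstacle, conceptually, is this final pigeonhole step: averaging $|\mathcal{F}^v|$ over all $m+1$ vertices of $H$ (rather than over the $t+1$ vertices of a specific $S_0 \in \mathcal{F}$) would only give $|\mathcal{F}^v| \geq (t+1)|\mathcal{F}|/(m+1)$ and hence the weaker bound $O(m^2 \cdot s_{3,t}(m))$, so it is essential to pigeonhole against a fixed set from the family to save the extra factor of $m$.
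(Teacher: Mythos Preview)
Your proof is correct. Both inequalities go through as written: the isolated-vertex construction for the lower bound, the monotonicity of $s_{3,t}$ via adjoining a vertex adjacent to all of a fixed $S_0$, the verification that $(H[V_0],\{S\setminus\{v\}:S\in\mathcal F^{v,v'}\})$ is a genuine $(3,t)$-system, the double count $t|\mathcal F^v|=\sum_{v'\in V_0}|\mathcal F^{v,v'}|$, and the final pigeonhole over a fixed $S_0\in\mathcal F$ are all sound.

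The route differs from the paper's in organisation. The paper introduces the auxiliary quantity $s_{3,t}'(m)$ (dropping the intersecting condition) and proves the chain $s_{3,t}(m)\le s_{3,t}'(m)\le s_{3,t+1}(m+1)\ll s_{3,t}'(m)\ll m\,s_{3,t}(m)$: the step $s_{3,t+1}(m+1)\ll s_{3,t}'(m)$ comes from a single descent (remove $v$ and its neighbourhood) together with an \emph{approximate} monotonicity lemma for $s_{3,t}'$ proved via blow-ups, and the step $s_{3,t}'(m)\le m\,s_{3,t}(m)/t$ comes from observing that $(H,\{S\in\mathcal F:v\in S\})$ is already a $(3,t)$-system on the same graph. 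You instead bypass $s_{3,t}'$ entirely by doing a two-variable descent (fix $v$, then $v'$) and by proving \emph{exact} monotonicity of $s_{3,t}$ itself via the neat ``attach $w$ to all of $S_0$'' construction. Your argument is more self-contained for this particular statement; the paper's is more modular, since $s_{3,t}'$ and its approximate monotonicity are reused in several other proofs.
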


\newtheorem*{repth14}{Theorem \ref{set estimates 3 t to t+1 relation}}

Part 6 of Theorem \ref{set estimates} follows from part 4 of Theorem \ref{set estimates} with $r=3$ by iterating the upper bound in Theorem \ref{set estimates 3 t to t+1 relation}. Note that the lower and upper bounds in part 6 of Theorem \ref{set estimates} match when $t=5$. This together with parts 1 through 4 of Theorem \ref{set estimates} with $r=3$ gives examples showing that both the lower and upper bound in Theorem \ref{set estimates 3 t to t+1 relation} can be tight. \newline

The following simple lemma will allow us to obtain lower bounds for $e_{r,t}(s)$ from upper bounds for $s_{r,t}(m)$ and upper bounds for $e_{r,t}'(s)$ from lower bounds for $s_{r,t}(m)$.

\begin{lemma}\label{set to edge lemma}
Let $r \geq 3$, $t \geq r-2$ and $m \geq 0$ be integers and $(H,\mathcal{F})$ be an $(r,t)$-system with $|H|=m$ and $|\mathcal{F}| \geq 1$. Then
$$m-t \leq e(H) \leq \binom{m}{2} \  .$$
\end{lemma}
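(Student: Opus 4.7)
The upper bound $e(H) \le \binom{m}{2}$ is trivial since $H$ is a simple graph on $m$ vertices, so the content of the lemma is the lower bound $e(H) \ge m - t$. My plan is to fix a single set $S \in \mathcal{F}$ (using the hypothesis $|\mathcal{F}| \ge 1$) and count edges from $V(H) \setminus S$ into $S$.

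First, I would invoke property 2 in Definition \ref{r system}: $S$ is maximally $K_{r-1}$-free in $H$. Thus for every vertex $v \in V(H) \setminus S$, the induced subgraph $H[S \cup \{v\}]$ contains a copy of $K_{r-1}$. Since $H[S]$ itself is $K_{r-1}$-free, any such copy must use $v$, which means $v$ is part of a $K_{r-1}$ with $r-2$ of its neighbors inside $S$. In particular, $v$ has at least $r-2 \ge 1$ neighbors in $S$.

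Next, I would observe that the edges counted this way are distinct for different choices of $v$: each such edge has exactly one endpoint (namely $v$) outside $S$. So summing over the $|V(H) \setminus S| = m - t$ vertices $v \in V(H) \setminus S$, the number of edges of $H$ with one endpoint outside $S$ and one endpoint inside $S$ is at least $m - t$. Therefore $e(H) \ge m - t$, completing the proof.

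There is no real obstacle here; the only point worth double-checking is that the multiplicities in $\mathcal{F}$ do not cause trouble, but since we only use one element $S \in \mathcal{F}$ the argument goes through verbatim. (In fact the same argument yields the slightly stronger bound $e(H) \ge (r-2)(m-t)$, but the weaker statement $m - t$ is all we will need.)
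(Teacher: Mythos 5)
Your proposal is correct and takes essentially the same approach as the paper: fix a single $S \in \mathcal{F}$, use the maximality condition to show every vertex outside $S$ is adjacent to (in fact fully joined to a $K_{r-2}$ in) $S$, and count these edges. The observation that the same argument yields $e(H) \ge (r-2)(m-t)$ is correct but not needed by the paper.
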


\newtheorem*{repth13}{Lemma \ref{set to edge lemma}}

\vspace{1cm}

We obtain estimates for $e_{r,t}(s)$ and $e_{r,t}'(s)$ which we will use in the proofs of part 2 of Theorem \ref{twin free result} and parts 1, 2 and 4 of Theorem \ref{twin free min deg result}.

\begin{theorem}

We have the following estimates for $e_{r,t}(s)$ and $e_{r,t}'(s)$.

\begin{enumerate}\label{edge estimates}
\item For all integers $r \geq 3$, $e_{r,r}(s)=e_{r,r}'(s)=s^2+(2r-7)s+\binom{r-2}{2}$ for large enough $s$.
\item For all integers $r \geq 3$, $e_{r,r+1}(s), e_{r,r+1}'(s)=\Theta\left(s^{3/2}\right)$.
\item For all integers $r \geq 3$ and $t \geq r+2$,
$$s^{1/(t-r+2)} \ll e_{r,t}(s) \leq  e'_{r,t}(s)  \ll s^{4/(t-r+2)} \ . $$ 
\item For all integers $t \geq 5$,
$$s^{2/(t-3)} \ll e_{3,t}(s) \leq  e'_{3,t}(s) \ll s^{4/(t-1)} \ . $$
\item We have
$$\frac{2}{3} s+o(s) \leq e_{3,5}(s) \leq e_{3,5}'(s) \leq 2s+o(s) \ . $$
\end{enumerate}

\end{theorem}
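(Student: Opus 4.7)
The plan is to handle the five parts separately, using Theorem \ref{set estimates} and Lemma \ref{set to edge lemma} as the main machinery. Parts 3 and 4 fit a common template: the upper bound on $s_{r,t}(m)$ from Theorem \ref{set estimates} combined with $e(H)\ge|H|-t$ from Lemma \ref{set to edge lemma} gives a lower bound on $e_{r,t}(s)$, while the lower bound on $s_{r,t}(m)$ combined with $e(H)\le\binom{|H|}{2}$ gives an upper bound on $e'_{r,t}(s)$ after extracting a subfamily of size exactly $s$ and adding edges to restore maximality. Concretely, this yields $e_{r,t}(s)\gg s^{1/(t-r+2)}$ and $e'_{r,t}(s)\ll s^{4/(t-r+2)}$ for Part 3, and the corresponding upper bound in Part 4.

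The sharpened lower bound $s^{2/(t-3)}$ in Part 4 does not follow from Lemma \ref{set to edge lemma} alone, since that only yields $s^{1/(t-3)}$. I would instead iterate the argument underlying the upper bound in Theorem \ref{set estimates 3 t to t+1 relation}: given a $(3,t)$-system $(H,\mathcal{F})$ with $|\mathcal{F}|=s$, pick a high-multiplicity vertex $v$ and pass to the induced family of maximally independent $(t-1)$-sets on $V(H)\setminus N_H[v]$, iterating down to a base case at $t=5$ (handled in Part 5). The main technical challenge is controlling the number of edges absorbed at each reduction step so that the exponent aggregates to $2/(t-3)$ rather than the weaker $1/(t-3)$.

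For Parts 1 and 2 a more direct analysis is needed. For Part 1, in the base case $r=3$ the graph on $\{v,a_1,b_1,\dots,a_s,b_s\}$ with edges $a_ib_j$ for $i\neq j$, together with the sets $S_i=\{v,a_i,b_i\}$, is a maximal $(3,3)$-system with exactly $s^2-s$ edges, matching the formula $s^2+(2r-7)s+\binom{r-2}{2}$; for general $r$ I would adjoin a $K_{r-2}$ to this construction and verify the resulting system is $K_r$-free. The matching lower bound should come from counting: using the conditions $|S\cap T|\ge r-2$ and $|S\setminus T|=|T\setminus S|\le 2$ together with maximality of each $S\in\mathcal{F}$, each of the $\binom{s}{2}$ pairs forces at least one distinct edge in $H$, giving the leading $s^2$ term, with the lower-order terms recovered by a refined accounting. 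Part 2's $\Theta(s^{3/2})$ scaling, given that $m\asymp s$ by Part 4 of Theorem \ref{set estimates}, strongly suggests a K\H{o}v\'ari--S\'os--Tur\'an-type argument applied to an auxiliary bipartite incidence structure, since the bounds from Lemma \ref{set to edge lemma} only yield the much weaker $s\ll e\ll s^2$.

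Finally, for Part 5 the upper bound $2s+o(s)$ should be constructive, gluing together a near-extremal $(3,5)$-system on $\Theta(\sqrt{s})$ vertices (from Part 6 of Theorem \ref{set estimates}) with roughly $2s$ additional edges required to make it maximal. The lower bound $(2/3)s+o(s)$ is the quantitative statement promised in the sketch proof of Section \ref{systems section}; I would prove it by a charging scheme that distributes a cost of at least $2/3$ per set $S\in\mathcal{F}$ to the edges of $H$, with the constant $2/3$ arising from averaging over a specific local configuration. Determining the correct local substructure that forces precisely the $2/3$ factor and matching it to a tight example will be the main obstacle here.
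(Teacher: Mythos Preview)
Your architecture for Parts 3 and 4 is essentially the paper's: Lemma \ref{set to edge lemma} plus Theorem \ref{set estimates} for Part 3, and an induction down to the base case $t=5$ for the lower bound in Part 4. The paper formalises the induction step as part 2 of the strengthened Theorem \ref{edge estimates 3 t to t+1 relation}: from a $(3,t)'$-system with $m$ vertices, $e$ edges and $s$ sets one extracts a $(3,t)$-subsystem of size $k\ge ms/(2e+m)$, and then the \emph{trivial} bound $e\ll m^2$ is combined with the inductive estimate $e\gg k^{2/(t-3)}$ to eliminate $m$ and obtain $e\gg s^{2/(t-2)}$. Your sketch omits this second ingredient; without it the recursion does not close.

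For Part 1 your counting idea has a real gap. Saying ``each of the $\binom{s}{2}$ pairs forces at least one distinct edge'' yields at most $s^2/2$, not $s^2$, and there is no evident way to repair this to an exact formula by refined accounting. The paper instead proves a rigidity statement: Lemma \ref{conical stability} forces $r-3$ conical vertices in any $(r,r)$-system with many sets, reducing to $r=3$, and then Lemma \ref{3 3 stability} shows that any $(3,3)$-system with $\omega(1)$ sets (restricted to vertices lying in some set) is \emph{exactly} $(H_{2,l}',\mathcal{F}_{2,l}')$. The exact edge count then drops out of the explicit structure. Your upper-bound construction is correct and is precisely $(H_{2,2s}',\mathcal{F}_{2,2s}')$ with $r-3$ conical vertices adjoined (not a $K_{r-2}$).

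For Part 2 the paper does not use K\H{o}v\'ari--S\'os--Tur\'an. The lower bound goes through Lemma \ref{r r+1 stability}, which reduces an $(r,r+1)$-system to a $(3,2)'$- or $(3,3)'$-system of comparable size; one then needs $e_{3,3}''(s)\gg s^{3/2}$, proved as Lemma \ref{3 3 edge result} via the structural Lemma \ref{3 3 stability} and a double count. The upper bound is an explicit bipartite construction on $X\cup\binom{X}{2}$. A direct KST argument is not obviously available because the relevant incidence structure is not a priori $K_{2,2}$-free.

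For Part 5 the $2/3$ does not come from a local charging scheme. The paper first uses Lemma \ref{principal intersection} to reduce to a $(3,4)'$-system, then double-counts pairs $(e,S)$ with $|e\cap S|=1$: this gives $\sum_v d(v)s(v)\ge (m-4)s$. The crucial input is Theorem \ref{many cases}, namely $s_{3,3}'(m)=m+O(1)$, which yields the constraint $d(v)+s(v)\le m+O(1)$ for every vertex. Maximising $\sum d(v)s(v)$ subject to $\sum d(v)=2e$, $\sum s(v)=4s$ and this constraint (via the pointwise inequality $x+9y\ge 16xy$ for $x+y\le 1$) is what produces the constant $2/3$. The upper bound uses the construction $H_{4,l}'$ from Construction \ref{system construction}, which already has $\sim 2s$ edges; no additional edges are needed for maximality.
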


\newtheorem*{repth17}{Theorem \ref{edge estimates}}

Part 3 is obtained from part 5 of Theorem \ref{set estimates} using Lemma \ref{set to edge lemma}. Part 4 gives a stronger lower bound in the special case $r=3$. While part 4 already gives $e_{3,5}(s)=\Theta(s)$, it turns out that the precise constant factor is closely related to $\tsat(n,K_3)$ and $\tsat(n,K_3,t)$ for $1 \leq t \leq 5$, which is the reason for including part 5.  The author conjectures that the upper bounds are tight. By parts 1, 2 and 5, Conjecture \ref{conj 4} is true when $t=r$, when $t=r+1$ and when $r=3$ and $t=5$.

\vspace{0.5cm}

We also show that $e_{3,t}(s)$ decreases with $t$.

\begin{theorem}\label{edge estimates 3 t to t+1 relation}
For all integers $t \geq 3$, $e_{3,t+1}(s) \leq e_{3,t}(s)$.
\end{theorem}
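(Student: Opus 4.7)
The plan is to build a $(3,t+1)$-system on the same number of sets and the same number of edges as an optimal $(3,t)$-system, by adjoining a single common vertex to every set in the family. Concretely, given a $(3,t)$-system $(H,\mathcal{F})$ with $|\mathcal{F}|=s$ and $e(H)=e_{3,t}(s)$, I would form $H'$ by adding one new isolated vertex $v$ to $H$, and define
$$\mathcal{F}' = \{\, S \cup \{v\} : S \in \mathcal{F}\,\}.$$
Clearly $e(H') = e(H) = e_{3,t}(s)$ and $|\mathcal{F}'| = |\mathcal{F}| = s$, since distinct sets in $\mathcal{F}$ give distinct sets in $\mathcal{F}'$ after adjoining $v$.

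The key step is to verify that $(H',\mathcal{F}')$ satisfies the three conditions of Definition \ref{r system} with $r=3$, and that every set in $\mathcal{F}'$ has size $t+1$. First, $H'$ is triangle-free because $H$ is and $v$ is isolated. Second, each $S' = S \cup \{v\} \in \mathcal{F}'$ has size $t+1$ and is independent in $H'$ (since $S$ is independent in $H$ and $v$ has no neighbours); to check maximality, note that any $w \in V(H') \setminus S'$ lies in $V(H) \setminus S$, so by the maximal independence of $S$ in $H$, $w$ has a neighbour in $S \subseteq S'$, hence $H'[S'\cup\{w\}]$ contains an edge. Third, any two distinct $S', T' \in \mathcal{F}'$ both contain $v$, so $S' \cap T' \ni v$ and $H'[S'\cap T']$ contains $K_1 = K_{r-2}$.

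There is no serious obstacle here: the construction is a one-line blow-up on the $v$ side and the verification is a routine check of the axioms. The conclusion $e_{3,t+1}(s) \leq e(H') = e_{3,t}(s)$ follows immediately.
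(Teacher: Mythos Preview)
Your proof is correct and is essentially the same as the paper's argument. The paper packages the construction of adding an isolated vertex as the second part of Observation~\ref{induction observation} and routes the inequality through the intermediate quantity $e_{3,t}''(s)$ (proving $e_{3,t}(s) \geq e_{3,t}''(s) \geq e_{3,t+1}(s)$), but the underlying construction and verification are identical to yours.
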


\newtheorem*{repth18}{Theorem \ref{edge estimates 3 t to t+1 relation}}

\vspace{0.5cm}

Finally, we show that for $t$ large enough relative to $r$, the $\omega(1)$ term in $\tsat(n,K_r,t)=tn+\omega(1)$ is essentially $e_{r,t}'(n)$.

\begin{theorem}\label{connection with edges}

For all integers $r \geq 3$ and $t \geq r+3$, 

$$tn + \Omega\left(e_{r,t}'\left[n+o(n)\right]\right) \leq \tsat(n,K_r,t) \leq tn + O\left(e_{r,t}'\left[n+o(n)\right]\right)  \ . $$

\end{theorem}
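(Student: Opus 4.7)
Both directions use the correspondence of Observation \ref{obs 1}: a $K_r$-saturated graph $G$ with a chosen vertex cover $C$ is encoded by a maximal $r$-system $(H,\mathcal{F})$ with $H=G[C]$ and $\mathcal{F}$ the multiset of neighbourhoods of vertices outside $C$; under this encoding, $|G|=|H|+|\mathcal{F}|$ and $e(G)=e(H)+\sum_{S\in\mathcal{F}}|S|$. A maximal $(r,t)$-system therefore produces a $K_r$-saturated graph of the right ``profile'', and conversely every such $G$ yields one up to controlled errors.

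For the upper bound I would pick a maximal $(r,t)$-system $(H_0,\mathcal{F}_0)$ with $|\mathcal{F}_0|=s:=n-|H_0|$ and $e(H_0)=e_{r,t}'(s)$. By Lemma \ref{set to edge lemma}, $|H_0|\le e(H_0)+t$; combined with the hypothesis $t\ge r+3$ and part 3 of Theorem \ref{edge estimates}, which gives $e_{r,t}'(s)\ll s^{4/(t-r+2)}\le s^{4/5}=o(s)$, this forces $|H_0|=o(n)$ and hence $s=n+o(n)$. The graph $G_0=G(H_0,\mathcal{F}_0)$ then has $n$ vertices, is $K_r$-saturated by Observation \ref{obs 1}, and
\[
e(G_0)=e(H_0)+ts=tn+O\bigl(e_{r,t}'(n+o(n))\bigr).
\]
To make this count as an upper bound for $\tsat(n,K_r,t)$ I would verify $\delta(G_0)\ge t$ (automatic for vertices outside $C_0$; inside $C_0$ it follows since $|\mathcal{F}_0|\gg|H_0|$ allows each vertex of $H_0$ to lie in at least $t$ sets) and the minimality/twin condition (choose $H_0$ twin-free, and ensure every potential twin pair in $C_0$ is contained in some $S\in\mathcal{F}_0$, whose external copy then serves as a degree-$t$ witness).

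For the lower bound let $G$ witness $\tsat(n,K_r,t)$; without loss of generality $e(G)=tn+o(n)$, else the claim is trivial since $e_{r,t}'(n)=o(n)$. A vertex-cover bound for sparse $K_r$-saturated graphs (in the spirit of Theorem 4 in \cite{Mine}) supplies a vertex cover $C$ of size $m=o(n)$. Let $(H,\mathcal{F})$ be the associated maximal $r$-system; the condition $\delta(G)\ge t$ forces $|S|\ge t$ for every $S\in\mathcal{F}$. Let $\mathcal{F}^\ast$ be the underlying \emph{set} of size-$t$ members of $\mathcal{F}$, write $s^\ast=|\mathcal{F}^\ast|$, and let $n_{>t}$ count vertices outside $C$ with $|\Gamma(v)|>t$. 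The minimality of $G$ forces every multiplicity-$\ge 2$ set $S\in\mathcal{F}^\ast$ to contain a vertex of degree exactly $t$ in $C$; since such a vertex lies in at most $t$ sets of $\mathcal{F}$ (by the degree constraint), the total excess multiplicity is $O(m)=o(n)$, giving $s^\ast\ge n-n_{>t}-o(n)$. Now augment $H$ by adding edges to obtain $H'\supseteq H$ with $(H',\mathcal{F}^\ast)$ a maximal $(r,t)$-system, so $e(H')\ge e_{r,t}'(s^\ast)$. Combined with the lower bound $e(G)\ge e(H)+t(n-m)+n_{>t}$ this yields
\[
e(G)-tn\;\ge\;e_{r,t}'(s^\ast)-|E(H')\setminus E(H)|+n_{>t}-tm,
\]
and a case split on $n_{>t}$ finishes: if $n_{>t}\gtrsim e_{r,t}'(n+o(n))$ the $+n_{>t}$ term dominates, while otherwise $s^\ast=n+o(n)$ and the $e_{r,t}'(s^\ast)$ term dominates.

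The main obstacle is bounding $|E(H')\setminus E(H)|$ and controlling the $-tm$ slack. An edge $e$ that can be added to $H$ without destroying the $(r,t)$-system is one that, in the original $r$-system $(H,\mathcal{F})$, was blocked only by some $S\in\mathcal{F}$ with $|S|>t$ (via a new $K_{r-1}$ in $(H+e)[S]$ through $e$). Counting such blocking configurations across all oversized $S$ reduces to bounding the number of $K_{r-3}$'s inside each $K_{r-1}$-free induced subgraph $H[S]$, which I would handle via a Kruskal--Katona type estimate; the delicate point is to show the resulting total is $o(e_{r,t}'(n+o(n)))$ so it can be absorbed into the error term. The parallel issue of ensuring $tm=o(e_{r,t}'(n+o(n)))$ in the small-$n_{>t}$ case is what forces the use of a sharp vertex-cover bound rather than a trivial one.
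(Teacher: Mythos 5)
Your proposal has the right frame (converting between $K_r$-saturated graphs and maximal $(r,t)$-systems via Observation~\ref{obs 1}), but both directions have genuine gaps that your own sketch acknowledges and that I don't think can be patched along the lines you suggest.

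For the lower bound, you reduce to a vertex cover $C$ via Theorem~4 of \cite{Mine}, getting $m=|C|=O(n/\log n)$, and then write $e(G)-tn\geq e_{r,t}'(s^\ast)-|E(H')\setminus E(H)|+n_{>t}-tm$. The decisive problem is the $-tm$ term: by part~3 of Theorem~\ref{edge estimates}, $e_{r,t}'(n+o(n))\ll n^{4/(t-r+2)}\leq n^{4/5}$, while $tm$ can be as large as $\Theta(n/\log n)$, which dominates $n^{4/5}$. So the slack $tm$ cannot be absorbed into the target $\Omega(e_{r,t}'(n+o(n)))$, and no ``sharper vertex-cover bound'' of the $O(n/\log n)$ type will fix this. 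The same issue afflicts $|E(H')\setminus E(H)|$, and your proposed Kruskal--Katona count does not obviously reduce it below the target scale either. The paper avoids the problem by \emph{not} removing the large sets at all: it invokes Lemma~4 of \cite{Mine} (a different and stronger structural result than Theorem~4) to produce a small set $S$ of size $O(1)$, defines $T=S\cup\{v:|\Gamma(v)\cap S|>t\}$ and $R=\{v:\Gamma(v)\subseteq T,\ d(v)=t\}$, and keeps every vertex of degree $>t$ inside $H_Q$. The resulting inequality $e(H_Q)\geq(t+\tfrac12)|H_Q|-\tfrac12|Q|+O(1)$ gives a positive fraction of $e(H_Q)$ left over after subtracting $t|H_Q|$, which is exactly the margin needed; your decomposition has no analogue of this $\tfrac12$-excess.

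For the upper bound, two steps you wave at do not go through as stated. First, picking ``a maximal $(r,t)$-system $(H_0,\mathcal{F}_0)$ with $|\mathcal{F}_0|=s:=n-|H_0|$ and $e(H_0)=e_{r,t}'(s)$'' is circular since $s$ depends on $|H_0|$; the paper resolves this with a fixed-point choice ($N$ minimal with $N\geq n+Ce_{r,t}'(N)$). Second, your claim that $\delta(G_0)\geq t$ on $H_0$ follows ``since $|\mathcal{F}_0|\gg|H_0|$ allows each vertex of $H_0$ to lie in at least $t$ sets'' is not valid: once $(H_0,\mathcal{F}_0)$ is fixed as an extremal system, the sets are determined, and some vertex $v\in H_0$ can have $d(v)+s(v)\leq t$, or even $s(v)=0$ together with a twin $w$ with $s(w)=0$. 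Likewise ``choose $H_0$ twin-free'' is not available --- the extremal system need not be. You also overlook the twin case where a vertex of $H_0$ with $s(v)=0$ and $d_{H_0}(v)=t$ is a twin of an external vertex $S\in\mathcal{F}_0$. The paper handles all of this with Lemma~\ref{clean up lemma}, iteratively deleting such bad vertices while controlling the edge and set losses, and then reselecting a subfamily $\mathcal{F}''$ to make $|G|=n$ while preserving $d(v)+s(v)>t$. That mechanism is the missing piece of your upper-bound argument.
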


\newtheorem*{repth15}{Theorem \ref{connection with edges}}

Parts 1 and 2 of Theorem \ref{twin free min deg result} are obtained from Theorem \ref{connection with edges} and parts 3 and 4 of Theorem \ref{edge estimates}, respectively.

\section{Conical vertices}\label{conical vertices second section}

In this section we first use conical vertices to obtain inequalities for $\tsat(n,K_r)$, $\tsat(n,K_r,t)$, $s_{r,t}(m)$, $e_{r,t}(s)$ and $e'_{r,t}(s)$ which we will use in the proofs in section \ref{twin free proof section}. We then state several conjectures regarding conical vertices and discuss the evidence for these conjectures.  \newline

It is easy to see that for every graph $G$ and integers $r,s \geq 0$, the graph $G^s$ obtained by adding $s$ conical vertices to $G$ is $K_{r+s}$-saturated if and only if $G$ is $K_r$-saturated. It is also easy to check that for every graph $H$, family $\mathcal{F}$ of sets of vertices in $H$ and integers $r \geq 3$, $t \geq r-2$ and $s \geq 0$, $(H^s,\mathcal{F}^s)$ is an $(r+s,t+s)$-system or maximal $(r+s,t+s)$-system if and only if $(H,\mathcal{F})$ is an $(r,t)$-system or maximal $(r,t)$-system, respectively, where $\mathcal{F}^s$ is the family of sets of vertices in $H^s$ which are the union of a set in $\mathcal{F}$ and the $s$ new conical vertices. We thus immediately obtain the following four lemmas, which we will use later.

\begin{lemma}\label{twin-free-conical} For all integers $r \geq 3$, $s \geq 0$ and large enough integers $n$,
$$\tsat(n+s,K_{r+s}) \leq \tsat(n,K_r) + sn +  \binom{s}{2} \ . $$
\end{lemma}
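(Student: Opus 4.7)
The plan is to take a minimum twin-free $K_r$-saturated graph on $n$ vertices and adjoin $s$ conical vertices to it: the resulting graph will turn out to be twin-free and $K_{r+s}$-saturated, and it will have exactly $sn + \binom{s}{2}$ more edges than we started with, giving the desired inequality.

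More precisely, for $n$ large enough, Theorem \ref{existence} provides a twin-free $K_r$-saturated graph on $n$ vertices; let $G$ be one with $e(G) = \tsat(n,K_r)$. Form $G^s$ by adding $s$ new vertices and joining each of them to every other vertex of $G^s$. A direct count gives $e(G^s) = \tsat(n,K_r) + sn + \binom{s}{2}$, and by the observation recalled at the start of this section ($G^s$ is $K_{r+s}$-saturated iff $G$ is $K_r$-saturated), $G^s$ is $K_{r+s}$-saturated.

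The only genuine step is to verify that $G^s$ is twin-free, and this is where I would spend the argument. The key observation is that twins in this paper's sense (equal open neighbourhoods $\Gamma$) must be non-adjacent: if $vw \in E$ and $\Gamma(v)=\Gamma(w)$, then $v \in \Gamma(w) = \Gamma(v)$, contradicting $v \notin \Gamma(v)$. In $G^s$, each of the $s$ new vertices is adjacent to every other vertex, so it cannot belong to a twin pair. Any twin pair in $G^s$ therefore lies entirely inside $V(G)$; but for $v,w \in V(G)$ the neighbourhoods $\Gamma_{G^s}(v)$ and $\Gamma_{G^s}(w)$ are obtained from $\Gamma_G(v)$ and $\Gamma_G(w)$ by adjoining the same set of $s$ new vertices, so they coincide iff $\Gamma_G(v) = \Gamma_G(w)$, and the twin-freeness of $G$ then forces $v = w$.

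I do not expect any step to be a serious obstacle: the edge count is a one-line calculation, the $K_{r+s}$-saturation is quoted, and the twin-freeness check, which is the only substantive point, reduces to the elementary fact that open-neighbourhood twins are non-adjacent together with the invariance of the open neighbourhood under adding universal vertices.
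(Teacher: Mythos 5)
Your proof is correct and follows the same approach the paper has in mind (the paper dispenses with this lemma as "immediately obtained" from the observation that adding $s$ conical vertices to $G$ yields a $K_{r+s}$-saturated graph iff $G$ is $K_r$-saturated, leaving the twin-freeness check implicit). You supply exactly the missing verification: twins in the open-neighbourhood sense are non-adjacent, universal vertices are adjacent to everything and hence twin-free, and open neighbourhoods of old vertices are shifted by the same fixed set of new vertices, so twin-freeness of $G$ carries over to $G^s$.
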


\begin{lemma}\label{twin-free-conical-min-degree} For all integers $r \geq 3$, $t \geq r-2$, $s \geq 0$ and large enough integers $n$,

$$\tsat(n+s,K_{r+s},t+s) \leq \tsat(n,K_r,t) + sn +  \binom{s}{2} \ . $$
\end{lemma}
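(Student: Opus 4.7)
The plan is to start with a graph $G$ on $n$ vertices that achieves the bound $\tsat(n,K_r,t)$, so $G$ is $K_r$-saturated with $\delta(G) \geq t$ and every pair of twins in $G$ is adjacent to some vertex of degree exactly $t$. Form $G^s$ by adding $s$ conical vertices to $G$, and verify that $G^s$ is admissible for computing $\tsat(n+s,K_{r+s},t+s)$. Counting then gives $e(G^s) = e(G) + sn + \binom{s}{2}$, which is exactly the claimed bound.

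The $K_{r+s}$-saturation of $G^s$ is immediate from the first fact recalled at the start of the section. For the minimum-degree check, each of the $s$ new conical vertices has degree $n+s-1$, which is at least $t+s$ as soon as $n \geq t+1$ (covered by ``large enough $n$''), while each original vertex gains exactly $s$ new neighbours and so has degree at least $t+s$ in $G^s$.

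The main point is the twin condition, and the key observation is that no twin pair of $G^s$ can contain a new conical vertex $c$: for any $v \neq c$, one has $c \in N_{G^s}(v)$ but $c \notin N_{G^s}(c)$, so $N_{G^s}(v) \neq N_{G^s}(c)$. Consequently every twin pair of $G^s$ consists of two original vertices $u,v$, and since adding conical vertices appends the same $s$ extra neighbours to every original vertex's neighbourhood, $u,v$ are twins in $G^s$ if and only if they are twins in $G$. By hypothesis, such a pair has a common neighbour $w$ with $\deg_G(w)=t$, and this $w$ remains a common neighbour in $G^s$ with $\deg_{G^s}(w)=t+s$, witnessing the condition. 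I do not expect any serious obstacle: the whole argument is bookkeeping of neighbourhoods and degrees under the conical-vertex operation, made essentially automatic by the fact that conical vertices are indistinguishable to all other vertices while remaining distinguishable from each other.
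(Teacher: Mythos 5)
Your proof is correct and is essentially the argument the paper intends: the paper obtains this lemma ``immediately'' from the observation that adding $s$ conical vertices preserves $K_r$-saturation (shifting $r$ to $r+s$), and you fill in the straightforward degree and edge-count bookkeeping along with the twin-witness check using the paper's equivalent characterisation of $\tsat(n,K_r,t)$. No gaps.
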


\begin{lemma}\label{lemma conical systems sets}
For all integers $r \geq 3$, $t \geq r-2$ and $s,m \geq 0$, 
$$s_{r+s,t+s}(m+s) \geq s_{r,t}(m) \ . $$
\end{lemma}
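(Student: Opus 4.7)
The plan is to show the inequality by explicit construction: start from a maximum $(r,t)$-system and pass to the $(r+s,t+s)$-system obtained by adding $s$ conical vertices, as described in the paragraph preceding the lemma. Concretely, I would pick an $(r,t)$-system $(H,\mathcal{F})$ with $|H|=m$ and $|\mathcal{F}|=s_{r,t}(m)$, form $H^s$ by adjoining $s$ new vertices each adjacent to every vertex of $H$ and to each other, and let $\mathcal{F}^s=\{S\cup\{v_1,\dots,v_s\}:S\in\mathcal{F}\}$, where $v_1,\dots,v_s$ are the new vertices. Then $|H^s|=m+s$, every set in $\mathcal{F}^s$ has size $t+s$, and distinct sets $S\neq T$ in $\mathcal{F}$ produce distinct sets $S^s\neq T^s$ in $\mathcal{F}^s$, so $|\mathcal{F}^s|=|\mathcal{F}|=s_{r,t}(m)$.

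The bulk of the work is then to verify that $(H^s,\mathcal{F}^s)$ satisfies the three properties of Definition \ref{r system} with parameters $(r+s,t+s)$. This is routine and follows from the observation that, for any subset $X\subseteq V(H)$, the induced subgraph $H^s[X\cup\{v_1,\dots,v_s\}]$ is the join of $H[X]$ with $K_s$, and hence contains $K_{k+s}$ if and only if $H[X]$ contains $K_k$. Applying this with $X=V(H)$ gives property~1, with $X=S$ and $X=S\cup\{v\}$ (for $v\in V(H)\setminus S$) gives property~2, and with $X=S\cap T$ gives property~3. The case of a ``new'' vertex $v\in\{v_1,\dots,v_s\}$ lying outside some $S^s$ cannot occur, since all such $v_i$ are already in every $S^s$, so property~2 need only be checked against vertices of $H$.

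I do not expect any real obstacle here: the conical-vertex correspondence is essentially a bijection between $(r,t)$-systems on $m$ vertices and $(r+s,t+s)$-systems on $m+s$ vertices whose $s$ designated vertices are conical. Taking the maximum over the left-hand family gives $s_{r,t}(m)$; the image sits inside the family maximised on the right, which yields $s_{r+s,t+s}(m+s)\geq s_{r,t}(m)$. The only minor care required is the small-$m$ regime where $s_{r,t}(m)$ might a priori involve an empty family, but in that case the inequality reads $s_{r+s,t+s}(m+s)\geq 0$, which is trivially true.
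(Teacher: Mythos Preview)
Your proposal is correct and follows exactly the paper's approach: the paper states in the paragraph preceding the lemma that $(H^s,\mathcal{F}^s)$ is an $(r+s,t+s)$-system if and only if $(H,\mathcal{F})$ is an $(r,t)$-system, and then says the four lemmas (including this one) follow immediately. Your write-up simply unpacks this verification in more detail than the paper bothers to give.
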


\begin{lemma}\label{lemma conical systems edges}
Let $r \geq 3$, $t \geq r$ and $s \geq 0$ be integers and $(H,\mathcal{F})$ be an $(r,t)$-system or maximal $(r,t)$-system with $|\mathcal{F}|=S$. Then 
$$e_{r+s,t+s}(S) \ \ \text{or} \ \ e_{r+s,t+s}'(S) \ \leq \ e(H)+s|H|+\binom{s}{2} \ ,$$
respectively.
\end{lemma}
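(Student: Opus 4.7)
The plan is to apply the conical extension construction directly. Given an $(r,t)$-system (respectively, a maximal $(r,t)$-system) $(H,\mathcal{F})$ with $|\mathcal{F}|=S$, I would form the pair $(H^s,\mathcal{F}^s)$ as defined in the paragraph preceding the four lemmas: $H^s$ is obtained by adding $s$ new conical vertices to $H$, and $\mathcal{F}^s$ consists of the sets $T\cup V_s$ for $T\in\mathcal{F}$, where $V_s$ denotes the set of $s$ new vertices. By the equivalence asserted there, $(H^s,\mathcal{F}^s)$ is then an $(r+s,t+s)$-system, and it is maximal whenever $(H,\mathcal{F})$ is.

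The remaining work is pure bookkeeping. First, the map $T\mapsto T\cup V_s$ is injective, so $|\mathcal{F}^s|=|\mathcal{F}|=S$, matching the index of $e_{r+s,t+s}$ and $e'_{r+s,t+s}$. Second, the edge count of $H^s$ decomposes as $e(H)+s|H|+\binom{s}{2}$: the original edges of $H$, the $s|H|$ edges joining each new conical vertex to each original vertex, and the $\binom{s}{2}$ edges among the $s$ new vertices themselves. Since $(H^s,\mathcal{F}^s)$ is thus a valid witness for the minimisation defining $e_{r+s,t+s}(S)$ (respectively, $e'_{r+s,t+s}(S)$), the claimed inequality follows immediately.

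I do not anticipate any genuine obstacle: the lemma is essentially a repackaging of the conical equivalence stated just above, combined with a trivial edge count. The only point worth a sanity check is that maximality is genuinely preserved by the conical extension, but this is precisely the content of the \emph{if and only if} in the preceding paragraph and can be verified by directly inspecting the three conditions of Definition \ref{r system} together with the extra condition in Definition \ref{maximal r system}, using that any missing edge of $H^s$ must lie inside $H$.
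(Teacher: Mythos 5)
Your proposal is correct and follows exactly the approach the paper intends: the paper states the lemma as an immediate consequence of the observation that $(H^s,\mathcal{F}^s)$ is an $(r+s,t+s)$-system (or maximal one) precisely when $(H,\mathcal{F})$ is an $(r,t)$-system (or maximal one), and your argument is just that observation plus the same routine edge count $e(H^s)=e(H)+s|H|+\binom{s}{2}$ and the injectivity of $T\mapsto T\cup V_s$.
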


By Lemma \ref{set to edge lemma}, $|H| \ll e(H)$ in Lemma \ref{lemma conical systems edges}, so we obtain the following corollary of Lemma \ref{lemma conical systems edges}.

\begin{col}\label{conical lemma edges asymp}
For all integers $r \geq 3$, $t \geq r$ and $s \geq 0$,
$$e_{r+s,t+s}(S) \ll e_{r,t}(S) \ \ \text{and} \ \ e_{r+s,t+s}'(S) \ll e_{r,t}'(S) \  . $$  
\end{col}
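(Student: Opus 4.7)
The plan is to apply Lemma \ref{lemma conical systems edges} to an extremal $(r,t)$-system (respectively maximal $(r,t)$-system) and then use Lemma \ref{set to edge lemma} to absorb the $s|H|$ term into a constant multiple of $e(H)$. Since $r$, $t$, and $s$ are fixed, this is really just a chain of two inequalities.

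In detail: let $S \geq 1$ and choose an $(r,t)$-system $(H, \mathcal{F})$ with $|\mathcal{F}|=S$ that attains $e(H) = e_{r,t}(S)$. Lemma \ref{lemma conical systems edges} applied to $(H, \mathcal{F})$ gives an $(r+s, t+s)$-system $(H^s, \mathcal{F}^s)$ with $|\mathcal{F}^s| = S$ and
\[
e_{r+s,t+s}(S) \ \leq \ e(H^s) \ = \ e(H) + s|H| + \binom{s}{2}.
\]
By Lemma \ref{set to edge lemma}, $|H| \leq e(H) + t$, so substituting yields
\[
e_{r+s,t+s}(S) \ \leq \ (s+1)\, e_{r,t}(S) + st + \binom{s}{2}.
\]
Because $r$, $s$ and $t$ are constants and (for $S \geq 1$) $e_{r,t}(S)$ is bounded below by a positive constant, this shows $e_{r+s,t+s}(S) = O(e_{r,t}(S))$, i.e.\ $e_{r+s,t+s}(S) \ll e_{r,t}(S)$. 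The case $S=0$ is handled trivially by taking $H$ empty on both sides.

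The argument for the primed quantities is identical, starting instead from a maximal $(r,t)$-system achieving $e(H) = e'_{r,t}(S)$; Lemma \ref{lemma conical systems edges} then produces a maximal $(r+s,t+s)$-system, and the same two-step bound gives $e'_{r+s,t+s}(S) \ll e'_{r,t}(S)$.

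There is really no obstacle here — the whole point of the parenthetical remark preceding the corollary is that Lemma \ref{set to edge lemma} turns the $s|H|$ penalty in Lemma \ref{lemma conical systems edges} into a term of the same order as $e(H)$, so the constant $s+1$ factor (which is absorbed by $\ll$) is the only cost of adding $s$ conical vertices.
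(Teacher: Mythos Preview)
Your proof is correct and follows exactly the approach the paper indicates in the sentence preceding the corollary: take an extremal system, apply Lemma~\ref{lemma conical systems edges}, and use Lemma~\ref{set to edge lemma} to bound $|H|$ by $e(H)+t$ so that the $s|H|+\binom{s}{2}$ term is absorbed into $O(e_{r,t}(S))$. One tiny quibble: the claim that $e_{r,t}(S)$ is bounded below by a positive constant ``for $S\geq 1$'' is not literally true (take $H$ to be the edgeless graph on $t$ vertices with $\mathcal{F}=\{V(H)\}$, giving $e_{r,t}(1)=0$), but since $\ll$ is an asymptotic statement and $e_{r,t}(S)\to\infty$, this does not affect the argument.
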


The upper bound in part 1 of Theorem \ref{twin free result} is obtained from the upper bound in part 2 using Lemma \ref{twin-free-conical}, the upper bound in part 3 of Theorem \ref{twin free min deg result} follows from the upper bound in part 4 using Lemma \ref{twin-free-conical-min-degree}, the lower bound in part 5 of Theorem \ref{set estimates} is obtained from the lower bound in part 6 using Lemma \ref{lemma conical systems sets}, and the upper bound in part 3 of Theorem \ref{edge estimates} follows from the upper bound in part 4 using the second part of Corollary \ref{conical lemma edges asymp}.

\vspace{0.5cm}

The author makes the following four conjectures, which are similar to Conjecture 1 in \cite{Mine}.

\begin{conj}\label{con vert twin free}
For all integers $r \geq 4$, every extremal graph for $\tsat(n,K_r)$ has a conical vertex for large enough $n$.
\end{conj}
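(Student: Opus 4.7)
The plan is to approach the conjecture by reduction through conical vertices together with a refined structural analysis of the maximal $r$-system associated with an extremal graph. Observe first that if $G$ is twin-free and $K_r$-saturated and $v \in V(G)$ is conical, then $G - v$ is twin-free (removing a universal vertex cannot merge two neighborhoods that were previously distinct, since the distinguishing vertex of any non-twin pair must differ from $v$) and $K_{r-1}$-saturated on $n-1$ vertices, so $e(G) \geq \tsat(n-1, K_{r-1}) + (n-1)$. Combined with Lemma \ref{twin-free-conical}, extremality of $G$ forces
\[
\tsat(n, K_r) = \tsat(n-1, K_{r-1}) + (n-1).
\]
Thus the conjecture splits into two tasks: (i) prove this identity for $r \geq 4$ and large $n$, and (ii) establish a stability statement guaranteeing that any extremal $G$ in fact contains a conical vertex.

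For both tasks, I would take $G$ extremal (so $e(G) = O(n)$) and, exactly as in the sketch proof in Section \ref{systems section}, invoke Theorem 4 of \cite{Mine} to obtain a vertex cover $C$ of $G$ of size $O(n / \log n)$. This yields a maximal $r$-system $(H, \mathcal{F})$ with $H = G[C]$ and $\mathcal{F}$ the family of neighborhoods of the vertices outside $C$. Since $V(G) \setminus C$ is independent and has size $n - o(n)$, a conical vertex of $G$ must lie in $C$, and hence corresponds to a vertex of $H$ adjacent in $H$ to every other vertex of $H$ and contained in every $S \in \mathcal{F}$. The problem therefore reduces to showing that any extremal maximal $r$-system with $|\mathcal{F}| = n - o(n)$ has such a common vertex. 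My strategy would be to partition $\mathcal{F} = \bigcup_t \mathcal{F}_t$ by neighborhood size, bound each part using the estimates in Theorems \ref{set estimates} and \ref{edge estimates}, and then prove a refinement asserting that maximal $r$-systems with \emph{no} common vertex satisfy strictly stronger set/edge estimates — with a gain in either $|\mathcal{F}|$ or $e(H)$ of order $n$, matching the contribution of a hypothetical conical vertex.

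The principal obstacle is precisely this refinement. Theorem \ref{twin free result} currently gives a lower bound of $(r+2)n + o(n)$ against a conjectured matching upper bound of $(r+3)n + o(n)$, a gap of exactly $n$ — the expected contribution of a single conical vertex. Proving the conical-vertex conjecture essentially amounts to closing this gap under the additional hypothesis that $\mathcal{F}$ has no common vertex, which is a delicate structural problem about intersecting families of maximally $K_{r-1}$-free sets and is closely tied to the author's parallel conjecture that the upper bound in Theorem \ref{twin free result} is tight. Even the base case $r = 4$ — where one must rule out twin-free $K_4$-saturated graphs on $n$ vertices without a conical vertex and with at most $7n + o(n)$ edges — appears to require new tools beyond those developed in the excerpt, which is why the statement is presented as a conjecture rather than a theorem.
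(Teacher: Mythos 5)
The statement you were asked to prove is presented in the paper as a conjecture, not a theorem, and the paper does not resolve it in full generality; your recognition of that fact, and your framing of what a proof would require — equality in Lemma~\ref{twin-free-conical} plus a stability statement, pursued through the reduction to maximal $r$-systems via the vertex cover from Theorem~4 of~\cite{Mine} — is a fair reflection of the paper's own discussion. Your preliminary observation that deleting a conical vertex from a twin-free $K_r$-saturated graph yields a twin-free $K_{r-1}$-saturated graph, because the conical vertex cannot be the distinguishing vertex for any non-twin pair, is correct and consistent with the blow-up/conical-vertex machinery developed in Section~\ref{conical vertices second section}.

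However, you miss the one portion of the conjecture that the paper \emph{does} establish, and in doing so you invert the picture of which cases are hard. The paper observes that Conjecture~\ref{con vert twin free} is true for all $r \geq 8$: by Hajnal's theorem, any $K_r$-saturated $G$ with no conical vertex has $\delta(G) \geq 2(r-2)$; combined with $\sat(n,K_r,t) = tn + O(1)$ from~\cite{Mine}, this gives $e(G) \geq 2(r-2)n + O(1)$; since the upper bound in part~1 of Theorem~\ref{twin free result} gives $\tsat(n,K_r) \leq (r+3)n + o(n)$, an extremal graph cannot have that many edges once $2(r-2) > r+3$, i.e.\ for $r \geq 8$. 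So the genuinely open range is $4 \leq r \leq 7$, and large $r$ is the easy case — the opposite of what your concluding remark suggests. Note also that closing the gap between $(r+2)n$ and $(r+3)n$ in Theorem~\ref{twin free result}, which you identify as the key difficulty, would still not by itself yield the conjecture (it controls the value of $\tsat(n,K_r)$ but not the structure of extremal graphs), whereas the Hajnal minimum-degree argument already delivers the structural conclusion outright for $r \geq 8$. Any serious attack on this conjecture should start from that observation rather than from the $r$-system edge-counting framework alone.
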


\begin{conj}\label{con vert twin free min deg}
For all integers $r \geq 4$ and $t \geq r-2$, every extremal graph for $\tsat(n,K_r,t)$ has a conical vertex for large enough $n$.
\end{conj}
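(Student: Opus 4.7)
The plan is to proceed by strong induction on $r$, with $r = 4$ as the base case, and to argue by contradiction. Suppose $G$ is an extremal graph for $\tsat(n,K_r,t)$ with no conical vertex. By Lemma \ref{twin-free-conical-min-degree} applied with $s=1$, the construction of adding a conical vertex to an extremal graph for $\tsat(n-1,K_{r-1},t-1)$ yields
\[
\tsat(n,K_r,t) \leq \tsat(n-1,K_{r-1},t-1) + (n-1),
\]
so the target is to show that any $G$ meeting this bound must in fact arise this way. The first step would be to pin down $\tsat(n,K_r,t)$ precisely (up to the relevant error term) by iterating this inequality down to $r=3$, using Theorem \ref{twin free result} and parts 3, 4 of Theorem \ref{twin free min deg result} as the base, and then to compare the hypothetical extremal $G$ to this benchmark.

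Next, I would analyse the fine structure of $G$ via the $r$-system decomposition introduced in Section \ref{systems section}. Assuming $e(G) = O(n)$ (else the conjecture is trivial, since the benchmark is $\Theta(n)$), Theorem 4 in \cite{Mine} furnishes a vertex cover $C$ of size $m = O(n/\log n)$, giving $G = G(H,\mathcal{F})$ for a maximal $r$-system $(H,\mathcal{F})$ with $|H| = m$. The idea is to locate some $w \in V(H)$ that is \emph{nearly} conical in $G$ -- adjacent to all but a bounded number of vertices -- and perform a local surgery: add $w$'s remaining missing edges (and adjust the affected sets in $\mathcal{F}$ so that the augmented structure is still a maximal $r$-system) so that $w$ becomes conical. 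The inductive hypothesis then applies to $G - w$, which is $K_{r-1}$-saturated with minimum degree at least $t-1$, and gives the desired contradiction.

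The main obstacle is twofold. First, the minimality condition -- that every twin pair has a vertex of degree exactly $t$ adjacent to it -- is brittle under edge additions: the surgery can easily create twins whose only degree-$t$ witness is destroyed, forcing further corrections with unclear cost. Second, producing a near-conical $w$ is not immediate; the bounds in Theorem \ref{set estimates} control $|\mathcal{F}|$ in terms of $|H|$ but do not, by themselves, force any vertex of $H$ to appear in a near-extremal fraction of the sets of $\mathcal{F}$. The most promising route is an averaging argument on membership in $\mathcal{F}$, combined with Corollary \ref{conical lemma edges asymp}, which quantifies the efficiency gain in passing from $(r,t)$ to $(r-1,t-1)$; but making this trade-off tight enough to beat the conjectured extremal value appears to require the sharp versions of Conjecture \ref{conj 4} and the tightness suggested in Theorem \ref{set estimates}, which is presumably why the statement is posed as a conjecture rather than proved.
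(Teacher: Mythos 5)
This is Conjecture~\ref{con vert twin free min deg}; the paper does not prove it, and your closing paragraph correctly concedes that your plan appears to need the sharp form of Conjecture~\ref{conj 4} and the conjectured tightness in Theorem~\ref{set estimates}. So what you have written is an honest sketch of an approach, not a proof, and you should keep it labelled as such. One concrete structural gap beyond the ones you name: your induction has no base case. You propose strong induction on $r$ starting at $r=4$, but your surgery step would reduce $r=4$ to a $K_3$-saturated graph, and the conjecture (deliberately) excludes $r=3$ because $K_3$-saturated graphs need not have conical vertices at all. So $r=4$ would require a self-contained argument that the sketch never supplies, and it is precisely the case the paper singles out as the first one not covered by other tools (see the remark after Lemma~\ref{conical stability}, which settles it only at the level of $(4,4)$-systems, not at the level of $\tsat$).

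You also overlook the partial evidence the paper actually records, which takes a much more economical route than your surgery. Hajnal's theorem says that every $K_r$-saturated graph $G$ with $\delta(G) < 2(r-2)$ has a conical vertex. Combined with $\sat(n,K_r,t)=tn+O(1)$, this gives $e(G) \geq 2(r-2)n + O(1)$ for any $K_r$-saturated $G$ on $n$ vertices with no conical vertex. Comparing against the upper bounds in parts 1 and 3 of Theorem~\ref{twin free min deg result}, the extremal graph is strictly cheaper than $2(r-2)n$ whenever $r \geq 8$ and $t < 2(r-2)$, so it must have a conical vertex in that range. Crucially, this argument never modifies the extremal graph, and thus entirely sidesteps the brittleness of the twin/degree-$t$ minimality condition that you correctly flag as fatal to the surgery. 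If you want to make progress on the remaining cases ($4 \leq r \leq 7$, or $t \geq 2(r-2)$), this degree-threshold comparison is the natural starting point; the surgery approach cannot even begin until you can produce the near-conical vertex $w$, and nothing in Theorem~\ref{set estimates} forces one to exist.
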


\begin{conj}\label{con vert set systems}
For all integers $r \geq 4$ and $t \geq r-2$, every underlying graph of an extremal system for $s_{r,t}(m)$ has a conical vertex for large enough $m$.
\end{conj}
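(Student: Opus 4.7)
Since this statement is a conjecture rather than a theorem, I can only sketch a plan of attack.

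The plan is to proceed by contradiction. Suppose $(H,\mathcal{F})$ is an $(r,t)$-system with $|H|=m$ large, $|\mathcal{F}|=s_{r,t}(m)$, and no conical vertex in $H$; the aim is to exhibit an $(r,t)$-system on $m$ vertices with strictly more sets. I would first settle the ranges $r-2\leq t\leq r+1$, where $s_{r,t}(m)$ is given by an explicit formula in parts 1--4 of Theorem \ref{set estimates}. In these ranges the extremum is already realised by adding $r-3$ conical vertices to an extremal $(3,t-r+3)$-system via Lemma \ref{lemma conical systems sets}, and a direct case analysis of the few possible extremal configurations should show that dropping the conical vertex strictly reduces $|\mathcal{F}|$.

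For the general range $t\geq r+2$ the main move would be a symmetrization. Pick a vertex $v$ of maximum degree in $H$; since $v$ is not conical there is some $u\notin N(v)\cup\{v\}$. Form $H'$ from $H$ by deleting all edges at $u$ and then joining $u$ to every vertex of $N(v)$, so that $u$ becomes a twin of $v$ in $H'$. Any $K_r$ in $H'$ through $u$ would transfer to a $K_r$ in $H$ through $v$, so $H'$ is still $K_r$-free. One would then have to extract from $\mathcal{F}$, after pruning and possibly adding some sets, a family $\mathcal{F}'$ of maximally $K_{r-1}$-free subsets of $H'$ satisfying the pairwise $K_{r-2}$-intersection property of Definition \ref{r system}, with $|\mathcal{F}'|\geq |\mathcal{F}|$ and equality only when $v$ was already conical. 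Iterating enlarges the set of vertices with neighbourhood equal to $N(v)$, and after finitely many steps $v$ would become conical, contradicting the hypothesis.

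The principal obstacle, and no doubt the reason the statement is still conjectural, is that maximally $K_{r-1}$-free sets are delicately tied to the global structure of $H$: sets in $\mathcal{F}$ that contain $u$ but not $v$ may cease to be maximal in $H'$, brand new maximal sets may appear, and the pairwise intersection condition has to be rechecked for every surviving and new pair. A reasonable first target would therefore be the weaker asymptotic statement that every extremal underlying graph has a vertex of degree $m-o(m)$; this looks more tractable because the modification step can then afford to lose $o(|\mathcal{F}|)$ sets, an error absorbable into the polynomial upper bounds for $s_{r,t}(m)$ in part 5 of Theorem \ref{set estimates}, and it would already suffice for many of the intended applications in Sections \ref{twin free proof section} and \ref{conical vertices second section}.
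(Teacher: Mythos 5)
This is a \emph{conjecture} in the paper, and the paper offers no proof---only partial evidence in special cases (via Hajnal's theorem on $K_r$-saturated graphs of low minimum degree, which handles extremal \emph{maximal} systems when $t<2(r-2)$, and via Lemma \ref{conical stability}, which handles $t=r$). So strictly there is no proof to compare against, and you are right to present only a sketch.

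That said, your sketch has a concrete gap in the second step. Making non-neighbours of $v$ into twins of $v$ never increases $v$'s degree: after every iteration $v$'s neighbourhood is still $N(v)$, so $v$ never becomes conical, and the claimed contradiction does not materialise. What iterated symmetrization towards $v$ produces is a graph in which $\overline{N(v)}$ is an independent set of pairwise twins, all with neighbourhood $N(v)$---in Tur\'an-type language, one large class of a ``partially multipartite'' graph---and such graphs typically have \emph{no} conical vertex. To finish you would need an additional argument extracting a conical vertex from the structure on $N(v)$, not from the symmetrized class. (For comparison, the way the paper actually extracts a conical vertex in Lemma \ref{conical stability} is to fix a set $S\in\mathcal F$, locate a popular $(r-2)$-clique $K\subseteq S$, and use condition~2 of Definition \ref{r system} against a non--almost-conical vertex to force an $(r-3)$-subset $A\subseteq K$ to dominate everything; the conical vertices come out of $K$, not out of a symmetrization.)

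There is also the deeper difficulty, which you do flag: symmetrization is tailored to monotone edge-counting objectives, whereas $|\mathcal F|$ is a count of \emph{maximally} $K_{r-1}$-free sets subject to a pairwise $K_{r-2}$-intersection constraint, and this quantity is not monotone under the local rewiring $u\mapsto$ twin of $v$. Deleting $u$'s old edges can destroy maximality of sets containing $u$, create spurious new maximal sets, and break the intersection condition for pairs involving $u$, so there is no reason $|\mathcal F'|\geq|\mathcal F|$. Absent a way to control this, the symmetrization route does not get off the ground; your suggested weaker target (a vertex of degree $m-o(m)$) faces the same obstruction, though with a larger error budget. For the ranges $r-2\leq t\leq r$ the conjecture is already known by the results above, so the genuinely open territory is $t\geq r+1$ (with $t=r+1$ partially addressed by Lemma \ref{r r+1 stability}), and any successful approach there will likely have to argue directly on the structure of $\mathcal F$ rather than on the edge set of $H$.
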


\begin{conj}\label{con vert set systems edges}
For all integers $r \geq 4$ and $t \geq r$, every underlying graph of an extremal system for $e_{r,t}(s)$ or $e_{r,t}'(s)$ has a conical vertex for large enough $s$.
\end{conj}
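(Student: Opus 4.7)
The plan is to argue by contradiction and mirror the surgery-based approach that presumably underlies the analogous Conjecture 1 in \cite{Mine}. Let $(H,\mathcal{F})$ be an extremal system for $e_{r,t}(s)$, and suppose for contradiction that $H$ has no conical vertex. The arguments for $e_{r,t}(s)$ and $e_{r,t}'(s)$ should run in parallel, with the maximal version being strictly more delicate because Definition \ref{maximal r system} must be preserved under any edge removals.

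The first step is to locate a plausible candidate conical vertex via a double counting argument. The intersection axiom (item 3 of Definition \ref{r system}) gives $|S\cap T|\ge r-2$ for distinct $S,T\in\mathcal{F}$. Writing $d(v)=|\{S\in\mathcal{F}:v\in S\}|$ and summing over ordered pairs of sets yields $\sum_v d(v)(d(v)-1)\ge (r-2)s(s-1)$. Combined with the bound $|V(H)|\le e(H)+t$ from Lemma \ref{set to edge lemma} and the upper estimates of Theorem \ref{edge estimates}, convexity provides a vertex $v^\star$ whose $\mathcal{F}$-degree is large; concretely, if $e_{r,t}(s)=\Theta(s^\alpha)$ one obtains $d(v^\star)\ge c\,s^{1-\alpha/2}$, which is an unbounded power of $s$ as soon as $t\ge r+2$, so $v^\star$ belongs to a positive fraction of the sets in $\mathcal{F}$.

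The second step, and the main difficulty, is to promote $v^\star$ to a conical vertex without increasing the edge count. The natural surgery is to add every missing edge at $v^\star$ and compensate by deleting edges elsewhere whose only role was to supply the structure now provided by $v^\star$. For this one needs the deleted edges to be \emph{redundant} in all three senses of Definition \ref{r system}: they must not be needed to witness the $K_{r-2}$ in any pairwise intersection, they must not be needed to keep some $S\in\mathcal{F}$ maximally $K_{r-1}$-free, and the re-routed structure at $v^\star$ must not create a $K_r$ in $H$. For $e_{r,t}'(s)$ there is the extra constraint that after deletion every non-edge still needs a witness making the system maximal, which I would handle by an iterative local replacement rather than a single swap.

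I expect the main obstacle to be exactly this leap from "$v^\star$ lies in many sets of $\mathcal{F}$" to "$v^\star$ can be made conical at zero net cost". Unlike the $r=3$ case of section \ref{systems section}, where the sets $S\in\mathcal{F}$ are independent in $H$, for $r\ge 4$ a set $S\in\mathcal{F}$ may carry an entire $K_{r-2}$ inside, so large $\mathcal{F}$-degree does not translate directly into large $H$-degree. Closing this gap will probably require a stability statement to the effect that any near-extremal $(r,t)$-system is close to being obtained from an $(r-1,t-1)$-system by appending a conical vertex, matching the upper construction underlying Corollary \ref{conical lemma edges asymp}. If such stability can be established, it would simultaneously prove the conjecture and yield a recursive description of $e_{r,t}(s)$ in terms of $e_{r-1,t-1}(s)$.
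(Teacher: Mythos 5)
The statement is labelled Conjecture~\ref{con vert set systems edges} in the paper and is \emph{not proved} there; the paper establishes it only in special cases, namely the $e'_{r,t}$ half for $t<2(r-2)$ (Hajnal's theorem applied to $G(H,\mathcal{F})$) and both halves for $t=r$ (Lemma~\ref{conical stability}), and otherwise cites only circumstantial evidence. There is therefore no proof in the paper to compare against, and the ``surgery-based approach that presumably underlies'' Conjecture~1 in \cite{Mine} cannot be mirrored because that is also an open conjecture, not a theorem.

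Your first step is on the right track but does not deliver what you claim. Writing $d(v)$ for the $\mathcal{F}$-degree as in your proposal (the paper uses $s(v)$ for this), the inequality $\sum_v d(v)(d(v)-1)\ge(r-2)s(s-1)$ normalised against $|V(H)|\le e(H)+t$ only gives $d(v^\star)\gg s^{1-\alpha/2}$, which is $o(s)$ whenever $\alpha>0$; the conclusion that $v^\star$ ``belongs to a positive fraction of the sets'' simply does not follow from this bound. (It \emph{is} true, but for a simpler reason: fix any $S_0\in\mathcal{F}$, note every $T\ne S_0$ meets $S_0$ by condition~3 of Definition~\ref{r system}, and pigeonhole over the $t$ vertices of $S_0$; alternatively, normalise your double count against $\sum_v d(v)=ts$ rather than $|V(H)|$.) More seriously, your second step is where the argument stops and is the whole content of the conjecture: you must make $v^\star$ conical at zero net edge cost while preserving conditions~2 and~3 of Definition~\ref{r system} and, in the $e'_{r,t}$ case, the maximality condition of Definition~\ref{maximal r system}. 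You offer no mechanism for this; you correctly observe that for $r\ge4$ a set $S\in\mathcal{F}$ may contain a $K_{r-2}$, so high $\mathcal{F}$-degree of $v^\star$ controls neither its $H$-degree nor the redundancy of any edges you propose to delete; and the ``stability statement'' you invoke to close the gap is not established anywhere and is essentially a restatement of the conjecture, so appealing to it is circular. What you have is a heuristic research direction, not a proof; the conjecture remains open.
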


Conjecture \ref{con vert twin free} would imply that we have equality in Lemma \ref{twin-free-conical}, or equivalently that

$$\tsat(n,K_r)=\tsat(n-r+3,K_3) + (r-3)n - \binom{r-2}{2} $$

for all integers $r \geq 3$ and large enough integers $n$. Similarly, Conjecture \ref{con vert twin free min deg} would imply that we have equality in Lemma \ref{twin-free-conical-min-degree}, or equivalently that

$$\tsat(n,K_r,t)=\tsat(n-r+3,K_3,t-r+3) + (r-3)n - \binom{r-2}{2} $$

for all integers $r \geq 3$, $t \geq r-2$ and large enough integers $n$. \newline

Suppose $(H,\mathcal{F})$ is an $(r,t)$-system, $v \in H$ is a conical vertex and $S \in \mathcal{F}$ is a set with $v \not \in S$. Then by condition 1 in Definition \ref{r system}, $H \setminus \{v\}$ is $K_{r-1}$-free (for a graph $G$ and a set of vertices $S \subseteq V(G)$, we let $G \setminus S=G[V(G) \setminus S]$). But by condition 2 in Definition \ref{r system}, $S$ is maximally $K_{r-1}$-free, so $S$ must be the entire vertex set of $H \setminus \{v\}$. Hence $|H|=t+1$ by Definition \ref{r t system}. It follows that if $(H,\mathcal{F})$ is an $(r,t)$-system, $v \in H$ is a conical vertex and $|H|$ is large enough then every set $S \in \mathcal{F}$ must contain $v$ and hence $(H,\mathcal{F})=((H \setminus \{v\})^1,\mathcal{G}^1)$, where $\mathcal{G}=\{S \setminus \{v\} : S \in \mathcal{F}\}$. \newline

So Conjecture \ref{con vert set systems} would imply that we have equality in Lemma \ref{lemma conical systems sets} for large enough $m$, or equivalently that $s_{r,t}(m)=s_{3,t-r+3}(m-r+3)$ for all integers $r \geq 3$, $t \geq r-2$ and large enough integers $m$. Similarly, Conjecture \ref{con vert set systems edges} would imply that we have equality in Corollary \ref{conical lemma edges asymp}, or equivalently that $e_{r,t}(s) \asymp e_{3,t-r+3}(s)$ and $e_{r,t}'(s) \asymp e_{3,t-r+3}'(s)$ for all integers $r \geq 3$ and $t \geq r$. By parts 1 and 2 of Theorem \ref{edge estimates}, this is true for $t=r$ and $t=r+1$, which is some evidence for Conjecture \ref{con vert set systems edges}. \newline

There is further evidence for Conjectures \ref{con vert twin free} through \ref{con vert set systems edges}. Hajnal showed that every $K_r$-saturated graph $G$ with $\delta(G) < 2(r-2)$ has a conical vertex (see Theorem 1 in \cite{Hajnal}). Since $\sat(n,K_r,t)=tn+O(1)$ (see page 1 in \cite{Mine}), it follows that $e(G) \geq 2(r-2)n+O(1)$ for every $K_r$-saturated graph $G$ on $n$ vertices with no conical vertex. Combined with the upper bound in part 1 of Theorem \ref{twin free result}, this shows that Conjecture \ref{con vert twin free} is true for $r \geq 8$. Similarly, combined with the upper bounds in parts 1 and 3 of Theorem \ref{twin free min deg result}, this shows that Conjecture \ref{con vert twin free min deg} is true when $r \geq 8$ and $t<2(r-2)$. \newline

Suppose $(H,\mathcal{F})$ is a maximal $(r,t)$-system with $t<2(r-2)$, $|H|>t$ and $|\mathcal{F}| \geq 1$. Then $G(H,\mathcal{F})$ is $K_r$-saturated by Observation \ref{obs 1} and has a vertex of degree $t<2(r-2)$, so $G(H,\mathcal{F})$ has a conical vertex $v$ by Hajnal's result. Since every vertex of $G(H,\mathcal{F})$ outside $H$ has degree $t<|H|$, $v$ must be in $H$. Since for all fixed $r$ and $t$, $s_{r,t}(m) \geq 1$ for large enough $m$ by Theorem \ref{set estimates}, it follows that Conjecture \ref{con vert set systems} is true for extremal \emph{maximal} systems when $t<2(r-2)$. Similarly, since $|H|$ must be large for every $(r,t)$-system $(H,\mathcal{F})$ with $|\mathcal{F}|$ large, it follows that Conjecture \ref{con vert set systems edges} is true for $e'_{r,t}(s)$ when $t<2(r-2)$. We will see later that Conjectures \ref{con vert set systems} and \ref{con vert set systems edges} are also true when $r=4=t$, the first case not covered by these results (see Lemma \ref{conical stability}). 

\section{Proofs}\label{twin free proof section}

In this section we prove Theorems \ref{existence} through \ref{connection with edges} and Lemma \ref{set to edge lemma}. We first prove Theorem \ref{existence}.

\begin{repthex}
Let $r \geq 3$ and $n \geq 0$ be integers. Then there exists a twin-free $K_r$-saturated graph on $n$ vertices unless $n=r$, $n=r+1$, $r=3$ and $n=6$, or $r=3$ and $n=7$.
\end{repthex}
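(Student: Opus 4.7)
The plan is to separate existence from non-existence.

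For \emph{existence} in the non-exceptional cases: when $n \leq r - 1$ take $G = K_n$, which is $K_r$-free, vacuously $K_r$-saturated (no non-edges), and twin-free (distinct vertices differ in their own membership of the other's neighbourhood). For larger non-exceptional $(r, n)$ I would construct twin-free $K_3$-saturated graphs on $m$ vertices for each $m \in \{5\} \cup \{m : m \geq 8\}$ -- using $C_5$ for $m = 5$, ad hoc small constructions for $m = 8, 9$, and the construction from the proof of the upper bound in part 2 of Theorem \ref{twin free result} for $m \geq 10$ -- and then handle $r \geq 4$ by adding $r - 3$ pairwise adjacent conical vertices. This operation preserves $K_r$-saturation (upgrading from $K_3$) and twin-freeness, since the new conical vertices have pairwise distinct neighbourhoods and augmenting every existing neighbourhood by the same set of vertices preserves all prior distinctions. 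The only non-exceptional $(r, n)$ with $r \geq 4$ for which $n - r + 3$ falls into the exceptional set $\{3, 4, 6, 7\}$ for $r = 3$ form a finite collection, which can be handled by explicit small constructions; for instance, $\overline{C_7}$ is a twin-free $K_4$-saturated graph on $7$ vertices.

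For \emph{non-existence} in the exceptional cases: if $n = r$ and $G$ is $K_r$-saturated, then any non-edge $uv$ forces $G + uv$ to contain a $K_r$ spanning all of $V(G)$, so every other edge is already present and $G = K_r - uv$, whose endpoints $u, v$ are twins. For $n = r + 1$ and $r \geq 4$, Hajnal's theorem gives a conical vertex (since $\delta(G) \leq r < 2(r - 2)$ for $r \geq 5$, with a short direct argument for $r = 4$ using that a $K_4$-free graph on $5$ vertices cannot have $\delta = 4$); removing it produces a $K_{r-1}$-saturated graph on $r$ vertices, which has twins by the previous case, and these lift to twins in $G$ since both are adjacent to the conical vertex. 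For $r = 3, n = 4$, a direct enumeration shows the only $K_3$-saturated graphs on four vertices are $K_{1,3}$ and $C_4$, each with twins.

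The hardest step is $r = 3$ and $n \in \{6, 7\}$. If $\delta(G) \leq 1$, then the unique neighbour of a vertex of degree at most one must be conical (to supply a common neighbour for every non-edge incident to the low-degree vertex), and the remaining graph must be $K_2$-saturated and so edgeless; hence $G$ is a star, which has many twins. If $\delta(G) \geq 2$, I would enumerate the short list of maximal triangle-free graphs of minimum degree at least two on six or seven vertices -- including $K_{2,4}$, $K_{3,3}$ and $C_5$-plus-vertex constructions for $n = 6$, and $K_{2,5}$, $K_{3,4}$ and a handful of non-bipartite $5$-cycle-based graphs for $n = 7$ -- and verify that each contains twins. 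Ensuring the exhaustiveness of this enumeration, rather than any single case within it, is the main obstacle.
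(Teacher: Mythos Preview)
Your overall strategy matches the paper's: reduce $r \geq 4$ to $r = 3$ via conical vertices, supply base constructions where the reduction lands in the exceptional set for $r = 3$, and handle non-existence by structural arguments. There is, however, one genuine gap in the existence half. The construction from the upper bound in part~2 of Theorem~\ref{twin free result} only works for \emph{large} $n$: with $l = \lceil n^{1/5} \rceil$ one has $|H'_{5,l}| = 5l^2 + 1$, which already exceeds $n$ for small $n$ (e.g.\ $n = 10$ gives $l = 2$ and $5l^2 + 1 = 21$), so there is no subfamily $\mathcal{F} \subseteq \mathcal{F}'_{5,l}$ of the required size $n - 5l^2 - 1$. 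The paper instead gives an explicit construction valid for every $n \geq 9$ with $n \neq 10$: a vertex $v$ joined to a set $S \subseteq \{0,1\}^k$ of binary strings shattering all coordinate pairs, with $S$ in turn joined to a perfect matching of size $k$ encoding the coordinates; the Petersen graph handles $n = 10$, and $C_8$ with its four long diagonals handles $n = 8$. You will need something along these lines to cover the whole intermediate range, not just $m \in \{8,9\}$.

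For non-existence at $r = 3$, $n \in \{6, 7\}$, your enumeration plan is valid but the paper's argument is much shorter and sidesteps your stated ``main obstacle'': it classifies all $K_3$-saturated graphs with a vertex of degree at most $2$ (stars, $K_{2,m}$, blow-ups of $C_5$) and, symmetrically, with a vertex of degree at least $n - 3$ (stars, $K_{2,n-2}$, $K_{3,n-3}$, blow-ups of $C_5$), notes that all of these have twins on $6$ or $7$ vertices, and observes that the only remaining possibility for $n = 7$ would be $3$-regular, impossible by parity. Two smaller remarks. The set of non-exceptional $(r, n)$ with $r \geq 4$ and $n - r + 3 \in \{6, 7\}$ is not finite --- it is $\{(r, r+3), (r, r+4) : r \geq 4\}$ --- but what \emph{is} finite is the number of base constructions needed: one twin-free $K_4$-saturated graph on $7$ vertices ($\overline{C_7}$, as you say) and one on $8$ (the paper uses the complement of $C_8$ with two adjacent long diagonals), after which $r - 4$ conical vertices cover all larger $r$. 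And in your $n = r + 1$ step, removing a conical vertex yields a $K_{r-1}$-saturated graph on $(r-1)+1$ vertices, which is the same statement one level down, not the already-established $n = r$ case; your argument is really an induction on $r$ with base case $r = 3$, and should be phrased as such.
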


\begin{proof}
If $n<r$, $K_n$, which is twin-free, is the unique $K_r$-saturated graph on $n$ vertices. It is easy to see that the unique $K_r$-saturated graph on $r$ vertices is the graph obtained by blowing up a vertex of $K_{r-1}$ by $2$. It is also easy to check that the only $K_r$-saturated graphs on $r+1$ vertices are the graph obtained by blowing up a vertex of $K_{r-1}$ by $3$ and the graph obtained by blowing up two vertices of $K_{r-1}$ by $2$. It is easy to see that the cycle $C_5$ is twin-free and $K_3$-saturated. Adding $r-3$ conical vertices gives a twin-free $K_r$-saturated graph on $r+2$ vertices for every integer $r \geq 3$. \newline

We now show that there is no twin-free $K_3$-saturated graph on $6$ or $7$ vertices. It is easy to see that the only $K_3$-saturated graph with a vertex of degree $0$ is $K_1$ and that all $K_3$-saturated graphs with a vertex of degree $1$ are stars $K_{1,m}$. It is also easy to check that all $K_3$-saturated graphs with a vertex of degree $2$ are either complete bipartite graphs $K_{2,m}$ or blow-ups of the cycle $C_5$. Moreover, it is easy to see that all $K_3$-saturated graphs on $n$ vertices with a vertex of degree $n-1$ are stars $K_{1,n-1}$. Furthermore, it is easy to check that all $K_3$-saturated graphs on $n$ vertices with a vertex of degree $n-2$ are complete bipartite graphs $K_{2,n-2}$ and that all $K_3$-saturated graphs on $n$ vertices with a vertex of degree $n-3$ are either complete bipartite graphs $K_{3,n-3}$ or blow-ups of the cycle $C_5$. It follows that there is no twin-free $K_3$-saturated graph on $6$ vertices and that a twin-free $K_3$-saturated graph on $7$ vertices would have to be $3$-regular, which is impossible. \newline

The vertices of $C_7$ have distinct closed neighbourhoods. Also, $C_7$ has no vertex cover of size $3$, but removing any edge gives the path $P_7$, which does have a vertex cover of size $3$. Equivalently, the complement $C_7^C$ of $C_7$ is twin-free and $K_4$-saturated. Adding $r-4$ conical vertices gives a twin-free $K_r$-saturated graph on $r+3$ vertices for every integer $r \geq 4$. Let $C_8$ have vertices $v_1, \cdots, v_8$, in cyclic order, and let $G$ be the graph obtained from $C_8$ by adding the chords $v_1 v_5$ and $v_2 v_6$. In other words, $G$ is the graph obtained from $C_8$ by adding two chords at distance $1$, both between opposite vertices. Then it is easy to check that the vertices of $G$ have distinct closed neighbourhoods and that $G$ has no vertex cover of size $4$, but removing any edge from $G$ gives a graph which does have a vertex cover of size $4$. Equivalently, $G^C$ is twin-free and $K_4$-saturated. Adding $r-4$ conical vertices gives a twin-free $K_r$-saturated graph on $r+4$ vertices for every integer $r \geq 4$. It is easy to check that the graph obtained from $C_8$ by adding all four chords between opposite vertices is twin-free and $K_3$-saturated. Adding $r-3$ conical vertices gives a twin-free $K_r$-saturated graph on $r+5$ vertices for every integer $r \geq 3$.  \newline

We now construct a twin-free $K_3$-saturated graph on $n$ vertices for every integer $n \geq 9$.  Adding $r-3$ conical vertices then gives a twin-free $K_r$-saturated graph on $r+k$ vertices for all integers $r \geq 3$ and $k \geq 6$. Let us say a set $S \subseteq \{0,1\}^k$ of binary sequences of length $k$ \emph{shatters} a pair $\{i,j\} \subseteq \{1,\cdots,k\}$ if for every binary sequence $y \in \{0,1\}^2$ there is a binary sequence $x \in S$ with $(x_i,x_j)=y$. For each integer $k \geq 2$, let $m(k)$ be the minimum size of a set $S \subseteq \{0,1\}^k$ that shatters every pair in $\{1,\cdots,k\}$. We will need a crude upper bound for $m(k)$. It is easy to check that $m(2)=4$ and $m(3)=4$. Let $E_k \subseteq \{0,1\}^k$ be the set of binary sequences of length $k$ with an even number of $1$ entries and $0_k \in \{0,1\}^k$ be the binary sequence with all entries equal to $0$. Then it is easy to check that $E_k \setminus \{0_k\}$ shatters all pairs in $\{1,\cdots,k\}$ for $k \geq 4$, so $m(k) \leq 2^{k-1}-1$ for $k \geq 4$. \newline

Let $n \geq 9$ be an integer. Let $k \geq 2$ be the smallest integer with $2^k+2k+1 \geq n$. We show that $n \geq m(k)+2k+1$ unless $n=10$. Indeed, if $k=2$, then $n=9$, so the inequality holds. If $k=3$, then $n \geq 10$, so the inequality holds unless $n=10$. Finally, if $k \geq 4$, then $n \geq 2^{k-1}+2k \geq m(k)+2k+1$. Hence, if $n \neq 10$, $m(k) \leq n-2k-1 \leq 2^k$, so there exists a set $S \subseteq \{0,1\}^k$ of size $n-2k-1$ that shatters all pairs in $\{1,\cdots,k\}$. Let $M=\{1,\cdots,k\} \times \{0,1\}$ and $G$ be the graph with vertex set $M \cup S \cup \{v\}$ and edges as follows. For each $i \in \{1,\cdots,k\}$, $(i,0)$ and $(i,1)$ are adjacent, every $x \in S$ is adjacent to $(i,x_i)$ for all $i \in \{1,\cdots,k\}$, $v$ is adjacent to all $x \in S$, and there are no other edges. Then it is easy to check that $G$ is a twin-free $K_3$-saturated graph on $n$ vertices. It remains to show that there is a twin-free $K_3$-saturated graph on $10$ vertices. It is easy to check that the Petersen graph - the graph whose vertices are the subsets of size $2$ of a set of size $5$ where two vertices are adjacent if and only if they are disjoint - has these properties.

\end{proof}

\vspace{1cm}

Our next aim is to prove Theorems \ref{set estimates 3 t to t+1 relation} and \ref{edge estimates 3 t to t+1 relation}. We first introduce three definitions, make an observation and prove a lemma.

\vspace{0.5cm}
 
Taking $r=3$ in Definition \ref{r system}, a $(3,t)$-system is a pair $(H,\mathcal{F})$, where $H$ is a graph and $\mathcal{F}$ is a family (with no repeated elements) of sets of vertices in $H$ of size $t$, with the following properties.

\begin{enumerate}
\item $H$ is $K_3$-free.
\item Every set $S \in \mathcal{F}$ is maximally independent.
\item $\mathcal{F}$ is an intersecting family.
\end{enumerate}

We now name similar structures that are not required to satisfy condition 3.

\begin{definition}\label{3 prime system definition}
For an integer $t \geq 1$, a \emph{$(3,t)'$-system} is a pair $(H,\mathcal{F})$, where $H$ is a graph and $\mathcal{F}$ is a family (with no repeated elements) of sets of vertices in $H$ of size $t$, with the following properties.

\begin{enumerate}

\item $H$ is $K_3$-free.
\item Every set $S \in \mathcal{F}$ is maximally independent.

\end{enumerate}

\end{definition}

We now define the analogues of $s_{3,t}(m)$ and $e_{3,t}(s)$.

\begin{definition}\label{prime function definition}
For integers $t \geq 1$ and $m \geq 0$, let $s_{3,t}'(m)$ be the maximum of $|\mathcal{F}|$ over all $(3,t)'$-systems $(H,\mathcal{F})$ with $|H|=m$.
\end{definition}

\begin{rem}
The problem of maximising the number of maximally independent sets of size $t$ in a graph on $m$ vertices becomes much easier if we do not require the graph to be $K_3$-free. Indeed, it is easy to see that the answer is $\Theta\left(m^t\right)$: the upper bound is trivial; for the lower bound, consider the disjoint union of $t$ cliques with sizes as equal as possible. In fact, Song and Yao obtained an exact result and showed that this graph is the unique extremal graph via a short inductive argument (see Theorem 1.1 in \cite{Song and Yao}).
\end{rem}

\begin{definition}\label{edge prime function definition}
For integers $t \geq 2$ and $s \geq 0$, let $e_{3,t}''(s)$ be the minimum of $e(H)$ over all $(3,t)'$-systems $(H,\mathcal{F})$ with $|\mathcal{F}|=s$.
\end{definition}

It is easy to see that if $(H,\mathcal{F})$ is a $(3,t)'$-system and $\mathcal{F}' \subseteq \mathcal{F}$ is a subfamily, then $(H,\mathcal{F}')$ is also a $(3,t)'$-system. Hence $e''_{3,t}(s)$ is an increasing function of $s$. We will see later that for all fixed $t \geq 2$, $s_{3,t}'(m)$ tends to infinity as $m$ tends to infinity. Hence, for all integers $t \geq 2$ and $s \geq 0$, there exist $(3,t)'$-systems $(H,\mathcal{F})$ with $|\mathcal{F}|=s$, so $e_{3,t}''(s)$ is well-defined. On the other hand, $s_{3,1}'(m)$ is bounded, so we cannot define $e_{3,1}''(s)$.

\vspace{1cm}

We will use the following observation in the proofs of Theorems \ref{set estimates 3 t to t+1 relation} and \ref{edge estimates 3 t to t+1 relation}.

\begin{obs}\label{induction observation}
Let $t \geq 1$ be an integer, $(H,\mathcal{F})$ be a $(3,t+1)'$-system, $v \in H$ be a vertex, $\Gamma'(v)=V(H) \setminus \left(\Gamma(v) \cup \{v\} \right)$ and $\mathcal{F}_v=\{S \setminus \{v\} : v \in S \in \mathcal{F} \}$. Then it is easy to check that $\left(H[\Gamma'(v)], \ \mathcal{F}_v\right)$ is a $(3,t)'$-system. \newline

Conversely, suppose $t \geq 1$ is an integer and $(H,\mathcal{F})$ is a $(3,t)'$-system. Let $H'$ be the graph obtained from $H$ by adding an isolated vertex $v$ and $\mathcal{F}'=\{S \cup \{v\} : S \in \mathcal{F}\}$. Then it is easy to check that $(H',\mathcal{F}')$ is a $(3,t+1)$-system.
\end{obs}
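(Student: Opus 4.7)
The plan is to verify each direction by directly checking the defining conditions of Definition \ref{3 prime system definition} (and, for the converse, the additional intersection condition of Definition \ref{r system} needed for a genuine $(3,t+1)$-system). Neither check involves anything beyond bookkeeping with neighbourhoods in a $K_3$-free graph, so I do not expect any serious obstacle; the only place that requires a moment of care is confirming, in the forward direction, that maximality of $S$ in $H$ transfers to maximality of $S \setminus \{v\}$ inside the induced subgraph on $\Gamma'(v)$.

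For the forward direction, I would first observe that for any $S \in \mathcal{F}$ with $v \in S$, independence of $S$ forces $S \setminus \{v\} \subseteq \Gamma'(v)$, so $\mathcal{F}_v$ really is a family of $t$-subsets of $V(H[\Gamma'(v)])$. $K_3$-freeness is inherited from $H$ because $H[\Gamma'(v)]$ is an induced subgraph. For maximal independence of $S \setminus \{v\}$ in $H[\Gamma'(v)]$, I would take any $u \in \Gamma'(v) \setminus (S \setminus \{v\})$, note that $u \notin S$ and $u \neq v$, and use maximality of $S$ in $H$ to obtain a neighbour $w \in S$; the fact that $u$ is non-adjacent to $v$ forces $w \neq v$, so $w \in S \setminus \{v\}$ witnesses the adjacency inside $H[\Gamma'(v)]$. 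Distinctness of the elements of $\mathcal{F}_v$ is immediate from distinctness in $\mathcal{F}$, since removing $v$ from two distinct sets containing $v$ yields two distinct sets.

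For the converse, I would check all three conditions of Definition \ref{r system} with $r = 3$. $K_3$-freeness of $H'$ is automatic because $v$ is isolated, so any triangle in $H'$ would already lie in $H$. Independence of $S \cup \{v\}$ follows from independence of $S$ together with isolation of $v$, while maximality follows from maximality of $S$ in $H$: any $u \in V(H') \setminus (S \cup \{v\})$ lies in $V(H) \setminus S$, and so already has a neighbour in $S \subseteq S \cup \{v\}$. The intersection condition holds for free because every set in $\mathcal{F}'$ contains $v$, so any two of them intersect in a set containing $v$, and $H'[\{v\}]$ trivially contains $K_{r-2} = K_1$. Since every $S \cup \{v\}$ has size $t+1$ and distinctness is preserved by adding the same vertex, $(H', \mathcal{F}')$ is a $(3,t+1)$-system, completing the sketch.
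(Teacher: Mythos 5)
Your proof is correct and is exactly the direct verification the paper implicitly invokes when it says "it is easy to check." You correctly identify the only point needing a moment's care (transferring maximality of $S$ to maximality of $S \setminus \{v\}$ inside $H[\Gamma'(v)]$, using that $u \in \Gamma'(v)$ is non-adjacent to $v$ so the witnessing neighbour lies in $S \setminus \{v\}$), and you correctly note for the converse that the intersection condition of Definition \ref{r system} with $r=3$ is automatic because $v$ lies in every set of $\mathcal{F}'$.
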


\vspace{1cm}

We now show that for fixed $t$, $s_{3,t}'(m)$ is approximately increasing. Throughout this section, for a graph $H$, family $\mathcal{F}$ of sets of vertices in $H$ and vertex $v \in H$, we denote by $s(v)$ the number of sets $S \in \mathcal{F}$ with $v \in S$.

\begin{lemma}\label{3 prime approx incr}
Let $M \geq m \geq t \geq 1$ be integers. Then $s_{3,t}'(M) \geq \left(1-t/m\right) s_{3,t}'(m)$.
\end{lemma}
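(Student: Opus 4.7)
The plan is to start with an extremal $(3,t)'$-system $(H,\mathcal{F})$ on $m$ vertices and produce from it a $(3,t)'$-system on $M$ vertices while discarding only a small fraction of the sets. The key idea is to identify a vertex of $H$ that lies in few sets of $\mathcal{F}$, throw away the sets containing it, and blow that vertex up to absorb the extra $M-m$ vertices.

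First I would pick $v \in V(H)$ minimising $s(v)$, the number of sets in $\mathcal{F}$ containing $v$. Double counting the incidences $(u, S)$ with $u \in S \in \mathcal{F}$ and using $|S| = t$ gives $\sum_{u \in V(H)} s(u) = t|\mathcal{F}|$, so averaging yields $s(v) \leq t|\mathcal{F}|/m$. Setting $\mathcal{F}' = \{S \in \mathcal{F} : v \notin S\}$ therefore gives $|\mathcal{F}'| \geq (1 - t/m)|\mathcal{F}| = (1 - t/m)\,s_{3,t}'(m)$.

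Next I would construct $H'$ by replacing $v$ with $M - m + 1$ pairwise non-adjacent copies, each having the same neighbours in $H - v$ as $v$ had in $H$. Then $|V(H')| = (m-1) + (M - m + 1) = M$. Since the copies of $v$ form an independent set and since $\Gamma(v)$ is independent in the $K_3$-free graph $H$, any triangle in $H'$ containing a copy of $v$ would project to a triangle of $H$ through $v$, a contradiction; so $H'$ is $K_3$-free.

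Finally I would verify that each $S \in \mathcal{F}'$ remains maximally independent in $H'$. Because $v \notin S$, the set $S$ sits inside $V(H') \setminus \{\text{copies of } v\} = V(H) \setminus \{v\}$ with the same internal edges as in $H$, so it is still independent. For maximality, any $u \in V(H') \setminus S$ is either a vertex of $H - v$, in which case $u$ had a neighbour in $S$ in $H$ and hence in $H'$, or a copy of $v$, in which case $v$ itself had a neighbour in $S$ in $H$ (by maximality of $S$ in $H$ together with $v \notin S$) and the copy inherits that neighbour. Thus $(H', \mathcal{F}')$ is a $(3,t)'$-system on $M$ vertices with at least $(1 - t/m)\,s_{3,t}'(m)$ sets, yielding the claimed inequality. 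I do not foresee a genuine obstacle; the only care needed is checking simultaneously that $K_3$-freeness and the maximality of the retained sets are preserved under the blow-up, and both follow immediately because each copy of $v$ plays exactly the same structural role as $v$ did.
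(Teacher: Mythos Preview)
Your proof is correct and follows essentially the same approach as the paper: find a vertex $v$ lying in at most $t|\mathcal{F}|/m$ sets by averaging, discard the sets containing it, and blow $v$ up to reach $M$ vertices. The paper leaves the verification that $(H',\mathcal{F}')$ is a $(3,t)'$-system as ``easy to check,'' whereas you have spelled out the $K_3$-freeness and maximality arguments explicitly.
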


\begin{proof}
Let $(H,\mathcal{F})$ be a $(3,t)'$-system with $|H|=m$ and $|\mathcal{F}|=s$. By double counting the number of pairs $(v,S)$, where $v \in H$ and $ S \in \mathcal{F}$, with $v \in S$, we obtain $\sum_{v \in H} s(v) = ts$. Hence, there is a vertex $v$ with $s(v) \leq ts/m$. Let $H'$ be the graph obtained from $H$ by blowing up $v$ by $M-m+1$ and $\mathcal{F}'=\{S \in \mathcal{F} : v \not \in S\}$. Then it is easy to check that $(H',\mathcal{F}')$ is a $(3,t)'$-system with $|H'|=M$ and $|\mathcal{F}'| \geq \left(1-t/m\right)s$, so the inequality follows. \newline

\end{proof}

\vspace{1cm}

We are now ready to prove Theorems \ref{set estimates 3 t to t+1 relation} and \ref{edge estimates 3 t to t+1 relation}. We first prove Theorem \ref{set estimates 3 t to t+1 relation}. In fact, we prove slightly more. We will use this in the proofs of the lower bounds in parts 3, 4 and 6 and the upper bound in part 4 of Theorem \ref{set estimates}.

\begin{repth14}
For all integers $t \geq 1$,

$$s_{3,t}(m) \leq s_{3,t}'(m) \leq s_{3,t+1}(m+1) \ll s_{3,t}'(m) \ll m \ s_{3,t}(m) \ . $$
\end{repth14}

\begin{proof}

It is clear from Definitions \ref{set function definition} and \ref{prime function definition} that $s_{3,t}(m) \leq s_{3,t}'(m)$. By the second part of Observation \ref{induction observation}, $s_{3,t}'(m) \leq s_{3,t+1}(m+1)$. \newline

We now show that $s_{3,t+1}(m+1) \ll s_{3,t}'(m)$. Let $(H,\mathcal{F})$ be a $(3,t+1)$-system with $|H|=m+1$. We need to show that $|\mathcal{F}| \ll s_{3,t}'(m)$. If $\mathcal{F}=\emptyset$, there is nothing to prove, so suppose otherwise. Let $S \in \mathcal{F}$. Then by condition 3 in Definition \ref{r system}, every set in $\mathcal{F}$ contains a vertex in $S$, so by a union bound, $|\mathcal{F}| \leq \sum_{v \in S} s(v)$. So it suffices to show that $s(v) \ll s_{3,t}'(m)$ for every vertex $v \in H$. \newline

Let $v \in H$. Define $\Gamma'(v)$ and $\mathcal{F}_v$ as in the first part of Observation \ref{induction observation}. Then by the first part of Observation \ref{induction observation}, $s(v)=|\mathcal{F}_v| \leq s_{3,t}'\left(|\Gamma'(v)|\right)$. Note that $0 \leq |\Gamma'(v)| \leq m$. Using Lemma \ref{3 prime approx incr} and the fact that $s_{3,t}'(l)=0$ for $l<t$, $s_{3,t}'(l)=1$ for $l=t$ and $s_{3,t}'(l) \geq 1$ for $l>t$, it is easy to show that $s_{3,t}'(l) \ll s_{3,t}'(m)$ for all integers $0 \leq l \leq m$, so $s(v) \ll s_{3,t}'(m)$. \newline

Finally, we show that $s_{3,t}'(m) \leq m \ s_{3,t}(m) / t$. Let $(H,\mathcal{F})$ be a $(3,t)'$-system with $|H|=m$. We need to show that $|\mathcal{F}| \leq m \ s_{3,t}(m)/t$. For each vertex $v \in H$, let $\mathcal{F}_v=\{S \in \mathcal{F} : v \in S\}$. Then it is easy to see that $(H,\mathcal{F}_v)$ is a $(3,t)$-system, so $s(v)=|\mathcal{F}_v| \leq s_{3,t}(m)$. Summing over all $v$ completes the proof.

\end{proof}

\vspace{0.5cm}

We now prove Theorem \ref{edge estimates 3 t to t+1 relation}. We again prove more. We will use this in the proofs of the lower bounds in parts 2 and 4 of Theorem \ref{edge estimates}. The proof will be similar to that of Theorem \ref{set estimates 3 t to t+1 relation}.

\begin{repth18}\leavevmode

\begin{enumerate}
\item For all integers $t \geq 2$,
$$ e_{3,t}(s) \geq e_{3,t}''(s) \geq e_{3,t+1}(s) \geq e_{3,t}''\left[\Omega(s)\right] \ . $$
(The first inequality only applies for $t \geq 3$, since $e_{3,t}(s)$ is only defined for $t \geq 3$.)
\item Let $t \geq 3$ be an integer and $(H,\mathcal{F})$ be a $(3,t)'$-system with $|H|=m \geq 1$, $e(H)=e$ and $|\mathcal{F}|=s$. Then $e \geq e_{3,t}(k)$ for some $k \geq ms/(2e+m)$.
\end{enumerate}
\end{repth18}

\begin{proof}

We first prove part 1. It is clear from Definitions \ref{edge function definition} and \ref{edge prime function definition} that $e_{3,t}(s) \geq e_{3,t}''(s)$. By the second part of Observation \ref{induction observation}, $e_{3,t}''(s) \geq e_{3,t+1}(s)$. \newline

We now show that $e_{3,t+1}(s) \geq e_{3,t}''\left[\Omega(s)\right]$. Let $(H,\mathcal{F})$ be a $(3,t+1)$-system with $|\mathcal{F}|=s$. We need to show that $e(H) \geq e_{3,t}''\left[\Omega(s)\right]$. Let $S \in \mathcal{F}$. Then, as in the proof of Theorem \ref{set estimates 3 t to t+1 relation}, by condition 3 in Definition \ref{r system}, every set in $\mathcal{F}$ contains a vertex in $S$, so there is a vertex $v \in S$ with $s(v) \gg s$. Define $\Gamma'(v)$ and $\mathcal{F}_v$ as in the first part of Observation \ref{induction observation}. Then by the first part of Observation \ref{induction observation}, $\left(H[\Gamma'(v)], \ \mathcal{F}_v\right)$ is a $(3,t)'$-system, so $e(H) \geq e\left(H[\Gamma'(v)]\right) \geq  e_{3,t}''\left[s(v)\right]=e_{3,t}''\left[\Omega(s)\right]$. \newline

We now prove part 2. Since $\sum_{v \in H} d(v) = 2e$, there is a vertex $v \in H$ with degree $d(v) \leq 2e/m$. Then by condition 2 in Definition \ref{3 prime system definition}, every set in $\mathcal{F}$ contains a vertex in $\Gamma(v) \cup \{v\}$, so there is a vertex $w \in \Gamma(v) \cup \{v\}$ with $s(w) \geq ms/(2e+m)$. Let $\mathcal{F}_w=\{S \in \mathcal{F} : w \in S\}$. Then, as in the proof of Theorem \ref{set estimates 3 t to t+1 relation}, it is easy to see that $(H,\mathcal{F}_w)$ is a $(3,t)$-system, so $e \geq e_{3,t}[s(w)]$.

\end{proof}

\vspace{1cm}

We now construct $(3,t)'$-systems which we will use in the proofs of the lower bounds in parts 3 and 6 and the upper bound in part 3 of Theorem \ref{set estimates}, the lower bound in part 1 and the upper bounds in parts 1 and 5 of Theorem \ref{edge estimates}, and the upper bound in part 2 of Theorem \ref{twin free result}.

\begin{constr}\label{system construction}

For each integer $t \geq 2$ and large enough integer $l$, we construct a $(3,t)'$-system $\left(H_{t,l},\mathcal{F}_{t,l}\right)$.

\begin{enumerate}

\item Let $H_{2,l}$ be the graph obtained from $K_{\lfloor l/2 \rfloor,\lceil l/2 \rceil}$ by removing a matching of size $\lfloor l/2 \rfloor$ and $\mathcal{F}_{2,l}$ be the family of pairs of matched vertices.

\item Let $H_{3,l}$ be the graph obtained from $K_{l,l}$ by removing a copy of $C_{2l}$ and $\mathcal{F}_{3,l}$ be the family of triples of consecutive vertices in the copy of $C_{2l}$. 

\item Let $S$ be a set of size $l$ and $X$ and $Y$ be copies of $S^2$. Let $H_{4,l}$ be the bipartite graph with parts $X$ and $Y$ where $(a,b) \in X$ is adjacent to $(c,d) \in Y$ if and only if either $a \neq c$ and $b \neq d$ or $a=c$ and $b=d$. Now define the family $\mathcal{F}_{4,l}$ as follows. For $a,b,c,d \in S$ with $a \neq c$ and $b \neq d$, the set consisting of the two vertices $(a,b), (c,d) \in X$ and the two vertices $(a,d), (c,b) \in Y$ is in $\mathcal{F}_{4,l}$ and there are no other sets in $\mathcal{F}_{4,l}$.

\item For $t \geq 5$, let $V_t=V(C_t)$ and $E_t=E(C_t)$. For each edge $e \in E_t$, let $S_e$ be a set of size $l$. Define the graph $H_{t,l}$ as follows. The vertex set of $H_{t,l}$ is $\bigcup_{v \in V_t} \prod_{e \ni v} S_e$, for every edge $vw \in E_t$, a vertex in $\prod_{e \ni v} S_e$ is adjacent to a vertex in $\prod_{e \ni w} S_e$ if and only if their $S_{vw}$ coordinates differ, and there are no other edges. Now define the family $\mathcal{F}_{t,l}$ as follows. For each vector $(s_e)_{e \in E_t} \in \prod_{e \in E_t} S_e$, the set $\{(s_e)_{e \ni v} : v \in V_t \}$ is in $\mathcal{F}_{t,l}$ and there are no other sets in $\mathcal{F}_{t,l}$.

\end{enumerate}

It is easy to check that $H_{t,l}$ and $\mathcal{F}_{t,l}$ have the following properties, which we will use later.

\begin{enumerate}

\item $\left(H_{t,l},\mathcal{F}_{t,l}\right)$ is a $(3,t)'$-system.
\item $|H_{2,l}|=l$, $|H_{3,l}|=2l$, $|H_{4,l}|=2l^2$ and $|H_{t,l}|=tl^2$ for $t \geq 5$.
\item $e(H_{2,l})=\lceil (l^2-2l)/4 \rceil$, $e\left(H_{4,l}\right)=l^4-2l^3+2l^2$ and $H_{t,l}$ is $2l(l-1)$-regular for $t \geq 5$.
\item $|\mathcal{F}_{2,l}|=\lfloor l/2 \rfloor$, $|\mathcal{F}_{3,l}|=2l$, $|\mathcal{F}_{4,l}|=l^2(l-1)^2/2$ and $|\mathcal{F}_{t,l}|=l^t$ for $t \geq 5$.
\item Let $(H_{t,l}',\mathcal{F}_{t,l}')$ be the $(3,t+1)$-system obtained from $\left(H_{t,l},\mathcal{F}_{t,l}\right)$ using the second part of Observation \ref{induction observation}. Then $(H_{t,l}',\mathcal{F}_{t,l}')$ is in fact a maximal $(3,t+1)$-system, unless $t=2$ and $l$ is odd. Moreover, there are only $O\left(l^3\right)$ missing edges $e$ in $H_{4,l}'$ such that $H_{4,l}'+e$ is $K_3$-free. Furthermore, $H_{t,l}'$ is twin-free for $t \geq 5$. 
\end{enumerate}

\end{constr}

\vspace{1cm}

Our next aim is to prove Theorems \ref{set estimates} and \ref{edge estimates}. We first prove Lemma \ref{set to edge lemma} and five other lemmas.

\begin{repth13}
Let $r \geq 3$, $t \geq r-2$ and $m \geq 0$ be integers and $(H,\mathcal{F})$ be an $(r,t)$-system with $|H|=m$ and $|\mathcal{F}| \geq 1$. Then
$$m-t \leq e(H) \leq \binom{m}{2} \  .$$
\end{repth13}

\begin{proof}
Let $S \in \mathcal{F}$. Then by condition 2 in Definition \ref{r system}, every vertex outside $S$ is fully connected to a clique of size $r-2$ in $S$. In particular, every vertex outside $S$ is adjacent to a vertex in $S$, so the lower bound follows. The upper bound is trivial.

\end{proof}

\vspace{1cm}

We will use the following structural result in the proof of the lower bound in part 5 of Theorem \ref{edge estimates}.

\begin{lemma}\label{principal intersection}
Let $t \geq 2$ be an integer and $(H,\mathcal{F})$ be a $(3,t+1)$-system with $s(v) \geq 1$ for all $v \in H$ and $|\mathcal{F}|=s \geq 1$. Then either $(H,\mathcal{F})$ can be obtained from a $(3,t)'$-system $(H',\mathcal{F}')$ using the second part of Observation \ref{induction observation} or there exists a  $(3,t-1)'$-system $(H',\mathcal{F}')$ with $H' \subseteq H$ and $|\mathcal{F}'|=\Omega(s)$.
\end{lemma}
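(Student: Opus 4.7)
The plan is to first reduce to the case where $H$ has no isolated vertex, and then build the $(3,t-1)'$-system promised in Case 2 by pigeonholing twice to find two non-adjacent vertices common to $\Omega(s)$ sets. Observation \ref{induction observation} already shows one direction of the equivalence between Case 1 and the statement ``$H$ has an isolated vertex''. For the converse, suppose some $v \in H$ is isolated; by the hypothesis $s(v) \geq 1$, vertex $v$ lies in at least one set, and if $v$ were missing from some $S_0 \in \mathcal{F}$ then maximality of $S_0$ would force $v$ to have a neighbor in $S_0$, contradicting isolation. Hence $v$ is common to every set and Case 1 applies. I therefore henceforth assume every vertex of $H$ has a neighbor.

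The construction hinges on finding two non-adjacent vertices $v_1, u^*$ that together sit in $\Omega(s)$ sets of $\mathcal{F}$. Fix any $S_0 \in \mathcal{F}$; since every $S \in \mathcal{F}$ meets $S_0$, a first pigeonhole gives some $v_1 \in S_0$ with $|\mathcal{F}_{v_1}| \geq s/(t+1)$, where $\mathcal{F}_{v_1} = \{S \in \mathcal{F} : v_1 \in S\}$. By the standing assumption, $v_1$ has a neighbor $w$, and by $s(w) \geq 1$ there is $S_1 \in \mathcal{F}$ containing $w$. The edge $v_1 w$ forces $v_1 \notin S_1$ and $w \notin S$ for every $S \in \mathcal{F}_{v_1}$, so the intersecting condition $S \cap S_1 \neq \emptyset$ actually yields $S \cap S_1 \subseteq S_1 \setminus \{w\}$, a set of size $t$. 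A second pigeonhole over $\mathcal{F}_{v_1}$ then produces $u^* \in S_1 \setminus \{w\}$ belonging to a subfamily $\mathcal{F}_2 \subseteq \mathcal{F}_{v_1}$ with $|\mathcal{F}_2| \geq |\mathcal{F}_{v_1}|/t = \Omega(s)$. Note $v_1 \neq u^*$ (since $u^* \in S_1$ but $v_1 \notin S_1$) and $v_1 u^*$ is not an edge (both lie in any independent $S \in \mathcal{F}_2$).

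With $v_1$ and $u^*$ in hand, I will set $V^* = \Gamma'(v_1) \cap \Gamma'(u^*)$, $H' = H[V^*]$, and $\mathcal{F}' = \{S \setminus \{v_1, u^*\} : S \in \mathcal{F}_2\}$, and verify that $(H', \mathcal{F}')$ is a $(3,t-1)'$-system with $|\mathcal{F}'| = |\mathcal{F}_2| = \Omega(s)$. Independence of each $S$ places $S \setminus \{v_1, u^*\}$ inside $V^*$, the residues are distinct because $v_1$ and $u^*$ can be reinserted, and $H'$ is $K_3$-free as an induced subgraph of $H$. The only nontrivial check is maximality in $H'$: for $x \in V^* \setminus (S \setminus \{v_1, u^*\})$, membership in $V^*$ excludes $x \in \{v_1, u^*\}$, so $x \notin S$; maximality of $S$ in $H$ together with $x$ being non-adjacent to both $v_1$ and $u^*$ by the definition of $V^*$ then forces the neighbor of $x$ in $S$ to lie in $S \setminus \{v_1, u^*\}$.

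The main obstacle is isolating the correct second vertex $u^*$: after the first pigeonhole $v_1$ sits in many sets, but to drop the set size by two I need a second vertex non-adjacent to $v_1$ that is also shared by $\Omega(s)$ of those same sets, and whose removal preserves maximality. The trick is to use the neighbor $w$ of $v_1$ (whose very existence leans on the failure of Case 1) together with some $S_1 \ni w$ as a size-$t$ ``pivot'' that every $S \in \mathcal{F}_{v_1}$ must meet, which is exactly what enables the second pigeonhole.
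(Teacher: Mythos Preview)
Your proof is correct. The overall architecture matches the paper's: split on whether all sets share a common vertex (equivalently, whether $H$ has an isolated vertex, as you show), and in the remaining case locate a pair of non-adjacent vertices contained in $\Omega(s)$ sets of $\mathcal{F}$, then strip them off via two applications of Observation~\ref{induction observation}.

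The point of departure is how the pair is found. The paper first argues that if $\bigcap_{S\in\mathcal{F}} S=\emptyset$ then already some subfamily $\mathcal{F}'\subseteq\mathcal{F}$ of size $O(1)$ has empty intersection; every $S\in\mathcal{F}$ must then contain at least two distinct vertices from $\bigcup_{T\in\mathcal{F}'}T$, and a single pigeonhole over the $O(t^4)$ available pairs yields a pair $P$ lying in $\Omega(s)$ sets. You instead fix $v_1$ by a first pigeonhole over some $S_0$, use the ``no isolated vertex'' assumption to produce a neighbour $w$ of $v_1$ and a set $S_1\ni w$ with $v_1\notin S_1$, and then run a second pigeonhole over the $t$ vertices of $S_1\setminus\{w\}$ to extract $u^*$. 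Your route is a touch more direct, gives the explicit constant $s/(t(t+1))$, and makes transparent exactly where the failure of Case~1 enters (via the existence of the neighbour $w$); the paper's route has the mild advantage of packaging both coordinates of the pair into a single pigeonhole, and its bounded-subfamily observation is a reusable trick. Either way the verification that $(H',\mathcal{F}')$ is a $(3,t-1)'$-system is identical to the double application of Observation~\ref{induction observation} carried out in the paper.
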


\begin{proof}
Suppose first that $\bigcap_{S \in \mathcal{F}} S \neq \emptyset$. Pick a vertex $v \in \bigcap_{S \in \mathcal{F}} S$. Since $s(w) \geq 1$ for all $w \in H$, $v$ must be an isolated vertex by condition 2 in Definition \ref{r system}. Hence, by the first part of Observation \ref{induction observation}, $(H',\mathcal{F}')$ is a $(3,t)'$-system, where $H'=H \setminus \{v\}$ and $\mathcal{F}'=\{S \setminus \{v\} : S \in \mathcal{F}\}$. Then $(H,\mathcal{F})$ can be obtained from $(H',\mathcal{F}')$ using the second part of Observation \ref{induction observation}. \newline

Now suppose $\bigcap_{S \in \mathcal{F}} S = \emptyset$. We show that in fact $\bigcap_{S \in \mathcal{F}'} S = \emptyset$ for some subfamily $\mathcal{F}' \subseteq \mathcal{F}$ with $|\mathcal{F}'|=O(1)$. Pick a set $S \in \mathcal{F}$. Then, for each vertex $v \in S$, since $\bigcap_{T \in \mathcal{F}} T = \emptyset$, there is a set $S_v \in \mathcal{F}$ with $v \not \in S_v$. Then $\mathcal{F}'=\{S_v : v \in S\} \cup \{S\}$ is the desired subfamily. \newline
 
Let $S \in \mathcal{F}$. Then by condition 3 in Definition \ref{r system}, for every set $T \in \mathcal{F}'$ there is a vertex $v_T \in S \cap T$. Since $\bigcap_{T \in \mathcal{F}'} T = \emptyset$, the $v_T$ cannot all be the same vertex. Hence, $P \subseteq S$ for some pair $P \subseteq  \bigcup_{T \in \mathcal{F}'} T$. Since there are only $O(1)$ such pairs $P$, it follows that for some pair $P$ there are $\Omega(s)$ sets $S \in \mathcal{F}$ with $P \subseteq S$. Then, applying the first part of Observation \ref{induction observation} twice, we obtain that $(H',\mathcal{F}')$ is a $(3,t-1)'$-system, where $H'=H \setminus \bigcup_{v \in P} \left( \Gamma(v) \cup \{v\} \right)$  and $\mathcal{F}'=\{S \setminus P : P \subseteq S \in \mathcal{F} \}$.

\end{proof}

\vspace{1cm}

Let $(H_{2,l},\mathcal{F}_{2,l})$ be as in Construction \ref{system construction}. Then by property 1 in Construction \ref{system construction} and the second part of Observation \ref{induction observation}, we obtain a $(3,3)$-system $(H_{2,l}',\mathcal{F}_{2,l}')$. The following stability result asserts that this is essentially the only $(3,3)$-system with many sets in $\mathcal{F}$. We will use this in the proofs of the upper bound in part 3 of Theorem \ref{set estimates} and the lower bound in part 1 of Theorem \ref{edge estimates}.

\begin{lemma}\label{3 3 stability}

Let $(H,\mathcal{F})$ be a $(3,3)$-system with $|\mathcal{F}|=\omega(1)$ and $s(v) \geq 1$ for all $v \in H$. Then $(H,\mathcal{F})=(H_{2,l}',\mathcal{F}_{2,l}')$ for some even $l$.

\end{lemma}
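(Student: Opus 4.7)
The plan is to apply Lemma \ref{principal intersection} with $t=2$ to the $(3,3)$-system $(H,\mathcal{F})$. The second alternative of that lemma --- a $(3,1)'$-system $(H'',\mathcal{F}'')$ with $|\mathcal{F}''|=\Omega(|\mathcal{F}|)$ --- is ruled out immediately: a maximally independent singleton in a $K_3$-free graph must be a universal vertex, and a $K_3$-free graph has at most two universal vertices (and only one if it has at least three vertices), so any $(3,1)'$-system has family of bounded size. Hence $(H,\mathcal{F})$ arises from some $(3,2)'$-system $(H',\mathcal{F}')$ by the second part of Observation \ref{induction observation}; the hypothesis $s(w) \geq 1$ transfers to the statement that every vertex of $H'$ lies in some pair of $\mathcal{F}'$, and $|\mathcal{F}'| = |\mathcal{F}| = \omega(1)$.

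I first show that $\mathcal{F}'$ is a matching. Assuming some vertex $v$ lies in two pairs $\{v,a\}, \{v,b\} \in \mathcal{F}'$, maximality of $\{v,a\}$ applied to $b$ forces $a \sim b$, and maximality of both pairs applied to any $w \in I := V(H') \setminus \{v,a,b\}$ forces $w \sim v$ (else $w \sim a$ and $w \sim b$ give a triangle with $a \sim b$); then $I$ is independent by $K_3$-freeness through $v$. A short case analysis on the pair containing each $w \in I$ (three options: $\{w, w'\}$ with $w' \in I$, or $\{w,a\}$, or $\{w,b\}$), using maximality and the triangle $\{w',a,b\}$ to rule out configurations, forces $|I| \leq 2$, whence $|\mathcal{F}'|$ is bounded, contradicting $|\mathcal{F}'| = \omega(1)$. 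Writing $l = |V(H')|$ (necessarily even), I then label the pairs as $\{a_i, b_i\}$ for $i = 1, \ldots, l/2$.

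The crux is to show that between any two pairs $i \neq j$ there are exactly two edges of $H'$, forming a matching pattern $\{a_i a_j, b_i b_j\}$ or $\{a_i b_j, b_i a_j\}$. Maximality of pair $i$ (and symmetrically $j$) forces at least one edge from each of $a_j, b_j$ to $\{a_i, b_i\}$, yielding at least two edges that must cover all of $\{a_i, b_i, a_j, b_j\}$; the only two-edge configurations that do so are the two matchings. To rule out the three-edge case, say $a_i \sim a_j$, $a_i \sim b_j$, $b_i \sim a_j$ and $b_i \not\sim b_j$, pick any third pair $\{a_k, b_k\}$: since $K_3$-freeness makes $N(a_i)$ and $N(b_i)$ independent, if $a_k \sim a_i$ then $a_k$ is non-adjacent to $a_j$ and $b_j$, contradicting maximality of pair $j$; hence $a_k \sim b_i$ and, pushing through, $a_k \sim b_j$. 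The same analysis on $b_k$ yields $b_k \sim b_i, b_j$ and $b_k \not\sim a_i, a_j$, so $a_i$ is non-adjacent to both $a_k$ and $b_k$, contradicting maximality of pair $k$. The four-edge case is similar and easier. Both arguments require at least three pairs, which is guaranteed by $|\mathcal{F}'| = \omega(1)$.

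With the two-edge matching structure between every pair-of-pairs, $\gamma_i := N(a_i) \cap N(b_i)$ is empty for every $i$, since a vertex from another pair lying in $\gamma_i$ would create a three-edge configuration. Fixing any $i$, this makes $H'$ bipartite with parts $P := N(b_i) \cup \{a_i\}$ and $Q := N(a_i) \cup \{b_i\}$. No other pair can lie entirely in $P$ or entirely in $Q$, as maximality would then force that part to equal $\{a_j, b_j\}$, impossible since $a_i \in P$ and $a_i \neq a_j, b_j$; so each pair straddles the bipartition. Relabelling within pairs so that $a_j \in P$ and $b_j \in Q$ for every $j$, maximality of pair $k$ applied to $a_j$ ($j \neq k$) then forces $a_j \sim b_k$ (since $a_j \not\sim a_k$ as both lie in $P$), giving $H' = K_{l/2, l/2}$ minus the perfect matching $\{a_i b_i\}$, i.e.\ $(H', \mathcal{F}') = (H_{2,l}, \mathcal{F}_{2,l})$ with $l$ even, and therefore $(H, \mathcal{F}) = (H_{2,l}', \mathcal{F}_{2,l}')$. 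The main obstacle is the three-edge case analysis of the previous paragraph, which requires careful tracking of the $K_3$-free and maximal-independence constraints through a third pair; everything else is essentially bookkeeping on the definitions.
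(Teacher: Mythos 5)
Your proposal is correct and follows essentially the same route as the paper: apply Lemma~\ref{principal intersection}, discard the $(3,1)'$-system branch since a $K_3$-free graph has at most two universal vertices, reduce to a $(3,2)'$-system $(H',\mathcal{F}')$ with $s(v)\geq 1$, show the pairs are disjoint, establish that the graph between any two pairs is a matching, and recover $H_{2,l}$. The only differences are cosmetic bookkeeping choices: the paper bounds $|\mathcal{F}'|$ when two pairs intersect by the fact that every vertex lies in at most two sets (via $s_{3,1}'(m)\leq 2$) together with at most one set landing inside the independent remainder, whereas you do a direct case analysis forcing $|I|\leq 2$; and in the final step the paper phrases the structure as transitivity of ``not matched'' while you derive the bipartition $P,Q$ explicitly from a fixed pair $i$ --- both yield the same conclusion.
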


\begin{proof}

By Lemma \ref{principal intersection}, either $(H,\mathcal{F})$ can be obtained from a $(3,2)'$-system $(H',\mathcal{F}')$ using the second part of Observation \ref{induction observation} or there exists a  $(3,1)'$-system $(H',\mathcal{F}')$ with $|\mathcal{F}'|=\omega(1)$. But the latter is impossible, since a $(3,1)'$-system is a family of conical vertices in a $K_3$-free graph, so $s_{3,1}'(m) \leq 2$ for all $m$. So it suffices to show that for every $(3,2)'$-system $(H',\mathcal{F}')$ with $|\mathcal{F}'|=\omega(1)$ and $s(v) \geq 1$ for all $v \in H'$, $(H',\mathcal{F}')=(H_{2,l},\mathcal{F}_{2,l})$ for some even $l$. \newline

Let $(H',\mathcal{F}')$ be a $(3,2)'$-system with $|\mathcal{F}'|=\omega(1)$ and $s(v) \geq 1$ for all $v \in H'$. Suppose for the sake of contradiction that there are two distinct intersecting sets in $\mathcal{F}'$, say $\{u,v\}$ and $\{v,w\}$. Then by condition 2 in Definition \ref{3 prime system definition}, $u$ and $w$ must be adjacent. Hence, by condition 1 in Definition \ref{3 prime system definition}, for every vertex $x \in V(H') \setminus \{u,v,w\}$, $x$ cannot be adjacent to both $u$ and $w$, so by condition 2 in Definition \ref{3 prime system definition}, $x$ must be adjacent to $v$. Hence, by condition 1 in Definition \ref{3 prime system definition}, $V(H') \setminus \{u,v,w\}$ is an independent set. \newline

So, by condition 2 in Definition \ref{3 prime system definition}, there is at most one set $S \in\mathcal{F}'$ with $S \subseteq V(H') \setminus \{u,v,w\}$. For every vertex $x \in H'$, if we define $\Gamma'(x)$ and $\mathcal{F}'_x$ as in the first part of Observation \ref{induction observation}, then applying the first part of Observation \ref{induction observation}, we obtain that $\left(H[\Gamma'(x)],\mathcal{F}'_x\right)$ is a $(3,1)'$-system, so there are at most $2$ sets $S \in \mathcal{F}'$ with $x \in S$, since $s_{3,1}'(m) \leq 2$ for all $m$. Hence, there are $O(1)$ sets $S \in \mathcal{F}'$ with $S \cap \{u,v,w\} \neq \emptyset$. It follows that $|\mathcal{F}'|=O(1)$, which is a contradiction. \newline

Hence the sets in $\mathcal{F}'$ partition $V(H')$, since $s(v) \geq 1$ for all $v \in H'$. By condition 2 in Definition \ref{3 prime system definition}, there are no edges within the parts. Let $R \neq S \in \mathcal{F}'$ and $u \in R$. On the one hand, by condition 2 in Definition \ref{3 prime system definition}, $u$ has at least one neighbour in $S$. On the other hand, we can pick a set $T \in \mathcal{F}' \setminus \{R,S\}$. Then $u$ has a neighbour $v \in T$, which has a neighbour $w \in  S$. Then by condition 1 in Definition \ref{3 prime system definition}, $u$ is not adjacent to $w$, so $u$ has at most one neighbour in $S$. Hence the induced graph between any two parts is a matching. \newline

Now let $R,S,T \in \mathcal{F}'$ be distinct and $u \in R$ not be matched to $v \in S$ and $w \in T$. Let $x$ and $y$ be the other vertices in $S$ and $T$, respectively. Then $u \in R$ is matched to $x \in S$ and $y \in T$, so by condition 1 in Definition \ref{3 prime system definition}, $x \in S$ and $y \in T$ are not matched. Hence $x \in S$ is matched to $w \in T$ and $y \in T$ is matched to $v \in S$, so $v \in S$ and $w \in T$ are not matched. It follows that not being matched is transitive, which implies that $(H',\mathcal{F}')=(H_{2,l},\mathcal{F}_{2,l})$, where $l=|H'|$ is even.

\end{proof}

\vspace{1cm}

The following structural result confirms Conjectures \ref{con vert set systems} and \ref{con vert set systems edges} when $t=r$. We will use this in the proofs of the upper bound in part 3 of Theorem \ref{set estimates} and the lower bound in part 1 of Theorem \ref{edge estimates}.

\begin{lemma}\label{conical stability}

Let $r \geq 3$ be an integer and $(H,\mathcal{F})$ be an $(r,r)$-system with $|\mathcal{F}|=\omega(1)$. Then $H$ has $r-3$ conical vertices.

\end{lemma}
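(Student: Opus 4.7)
The plan is to prove this by induction on $r$. The base case $r=3$ is vacuous since the lemma asks for $r-3=0$ conical vertices. For the inductive step, fix $r \geq 4$ and assume the result for $(r-1,r-1)$-systems; it suffices to find a \emph{single} conical vertex $v$ of $H$. Indeed, since Theorem \ref{set estimates} part 3 forces $|H|$ to be large when $|\mathcal{F}|=\omega(1)$, the observation in the paragraph after Conjecture \ref{con vert set systems edges} then gives $v \in S$ for every $S \in \mathcal{F}$, and a direct verification shows that $(H \setminus \{v\}, \{S \setminus \{v\} : S \in \mathcal{F}\})$ is an $(r-1,r-1)$-system with $|\mathcal{F}|$ distinct sets. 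The three axioms of Definition \ref{r system} all transfer because $v$ is conical: $H \setminus \{v\}$ is $K_{r-1}$-free, else $H$ would contain $K_r$; each $S \setminus \{v\}$ is maximally $K_{r-2}$-free, because any $K_{r-2}$ in it would extend via $v$ to a $K_{r-1}$ in $H[S]$, and the maximality condition descends directly from that of $S$; and the $K_{r-2}$ in $H[S_1 \cap S_2]$ must contain $v$ by the same argument, so removing $v$ produces a $K_{r-3}$ in $(H \setminus \{v\})[(S_1 \cap S_2) \setminus \{v\}]$. The inductive hypothesis then supplies $r-4$ further conical vertices of $H \setminus \{v\}$, which remain conical in $H$; together with $v$ this totals $r-3$ conical vertices.

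To produce $v$, I first locate a shared $K_{r-2}$ inside many sets. Fix any $S_0 \in \mathcal{F}$; by condition 3 of Definition \ref{r system}, every other $T \in \mathcal{F}$ contains some $K_{r-2}$ inside $H[S_0]$, and since $|S_0|=r$ there are at most $\binom{r}{r-2}$ such $K_{r-2}$-subgraphs. Pigeonhole then produces $K \cong K_{r-2}$ with $K \subseteq S_0$ and a subfamily $\mathcal{F}_K \subseteq \mathcal{F}$ of size $\omega(1)$ with $K \subseteq T$ for every $T \in \mathcal{F}_K$. Each such $T$ decomposes as $T = K \cup \{a_T, b_T\}$ with $a_T \neq b_T$ outside $K$, and the $K_{r-1}$-freeness of $H[T]$ immediately forces neither of $a_T,b_T$ to be adjacent to all of $K$.

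The main obstacle is then to upgrade this to the statement that some $v \in K$ is adjacent to every other vertex of $V(H)$. I plan to argue by contradiction, picking, for each $k \in K$, a non-neighbour $w_k \neq k$. For each $T \in \mathcal{F}_K$ with $w_k \notin T$, the maximality of $T$ in $H$ produces a $K_{r-1}$ inside $H[T \cup \{w_k\}]$ containing $w_k$, and since $w_k \not\sim k$ this $K_{r-1}$ must avoid $k$. A case analysis over which $(r-2)$-element subset of $T \setminus \{k\}$ completes the $K_{r-1}$ with $w_k$ gives only two possible patterns: either $w_k$ together with exactly one of $a_T,b_T$ is adjacent to all of $K \setminus \{k\}$, or $w_k, a_T, b_T$ are mutually adjacent and jointly adjacent to $K \setminus \{k,k'\}$ for some further $k' \in K$. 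Both patterns encode very rigid adjacency data tying together $w_k$, $a_T$, $b_T$ and $K \setminus \{k\}$. A secondary pigeonhole over $k \in K$, combined with these rigidity constraints and the fact that distinct $T,T' \in \mathcal{F}_K$ overlap in at least $r-2$ vertices, forces either some $w_k$ to lie in all but $O(1)$ sets of $\mathcal{F}_K$ (which then restricts the outside vertices $x_T$ across $T$'s so strongly that only boundedly many $T$ remain admissible), or else it forces the pairs $\{a_T,b_T\}$ together with an appropriate restriction of $V(H) \setminus K$ to carry the structure of a $(3,3)$-system of size $\omega(1)$, whose shape Lemma \ref{3 3 stability} pins down to $(H_{2,l}',\mathcal{F}_{2,l}')$ in a way incompatible with every $k \in K$ having a non-neighbour. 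Either branch yields a contradiction, producing the required conical $v \in K$ and completing the inductive step.
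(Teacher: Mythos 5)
Your inductive reformulation is sound: assuming the result for $(r-1,r-1)$-systems, it does suffice to produce a single conical vertex $v$, and your verification that $(H \setminus \{v\}, \{S \setminus \{v\} : S \in \mathcal{F}\})$ inherits the $(r-1,r-1)$-system axioms when $v$ lies in every $S \in \mathcal{F}$ is correct. (One small caution: do not invoke Theorem \ref{set estimates} part 3 to conclude $|H|$ is large --- that theorem's proof relies on the present lemma, so it would be circular; the trivial bound $|\mathcal{F}| \leq 2^{|H|}$ already does the job.) The first pigeonhole producing $K$ and $\mathcal{F}_K$ of size $\omega(1)$ also matches the paper.

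The genuine gap is the core step: showing that if no $k \in K$ is conical then $|\mathcal{F}_K| = O(1)$. Your contradiction argument never closes. After the correct local case analysis (the $K_{r-1}$ in $H[T \cup \{w_k\}]$ misses exactly one vertex of $(K \setminus \{k\}) \cup \{a_T,b_T\}$, giving your two patterns), the remaining text --- ``a secondary pigeonhole over $k \in K$,'' ``restricts the outside vertices $x_T$ across $T$'s so strongly,'' ``carry the structure of a $(3,3)$-system of size $\omega(1)$'' --- is a description of desiderata, not a proof. It is not specified what the ground set of the claimed $(3,3)$-system is (the objects you have are \emph{pairs} $\{a_T,b_T\}$, which would more naturally form a $(3,2)'$-system, while Lemma \ref{3 3 stability} addresses $(3,3)$-systems), nor why either branch actually bounds $|\mathcal{F}_K|$. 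The paper avoids this difficulty by a different second pigeonhole --- on the adjacency patterns $V_R = \{v : \Gamma(v) \cap K = R\}$, yielding subsets $A,B \subseteq K$ such that $\omega(1)$ sets split as $K \cup \{v,w\}$ with $v \in V_A$, $w \in V_B$ --- and then by introducing \emph{almost conical} vertices and noting that a $K_r$-free graph has only $O(1)$ of them, so some $T$ has $v$ not almost conical in $H[V_A]$. That gives a ``generic'' witness from which the structure $A=B$, $|A|=r-3$, $v \not\sim w$, and the conicality of $A$, follow by short direct arguments. Your proposal contains no analogue of either device, and without something of that strength the contradiction argument has no route to a bound on $|\mathcal{F}_K|$.
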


\begin{proof}

Let $S \in \mathcal{F}$. Then by condition 3 in Definition \ref{r system}, for every set $T \in \mathcal{F} \setminus \{S\}$, there is a clique $K \subseteq S \cap T$ of size $r-2$. Since there are only $O(1)$ cliques $K \subseteq S$ of size $r-2$, it follows that there is a clique $K$ of size $r-2$ such that there are $\omega(1)$ sets $T \in \mathcal{F}$ with $K \subseteq T$. \newline

For each subset $R \subseteq K$, let $V_R=\{v \in V(H) \setminus K : \Gamma(v) \cap K =R\}$. Note that the $V_R$ partition  $V(H) \setminus K$. By Definition \ref{r t system}, every set $K \subseteq T \in \mathcal{F}$ is of the form $T=K \cup \{v,w\}$ for some vertices $v,w \in V(H) \setminus K$. Since there are only $O(1)$ subsets $R \subseteq K$, it follows that for some subsets $A, B \subseteq K$, there are $\omega(1)$ sets $T \in \mathcal{F}$ of the form $T=K \cup \{v,w\}$ for some vertices $v \in V_A$ and $w \in V_B$. \newline

Let us call a vertex in a graph \emph{almost conical} if it is not adjacent to at most two other vertices. It is easy to see that a graph with many almost conical vertices has a large clique, so by condition 1 in Definition \ref{r system}, there are only $O(1)$ almost conical vertices in $H[V_A]$ and $H[V_B]$. Hence, without loss of generality, there is a set $T \in \mathcal{F}$ of the form $T=K \cup \{v,w\}$ for some not almost conical vertex $v \in H[V_A]$ and vertex $w \in V_B$. We show that $A=B$, $|A|=r-3$ and $v$ and $w$ are not adjacent. \newline

By condition 2 in Definition \ref{r system}, $H[T]$ is $K_{r-1}$-free, so $A,B \neq K$. Since $v$ is not almost conical in $H[V_A]$, there is a vertex $u \in V_A \setminus \left(\Gamma(v) \cup \{v,w\} \right)$. Then by condition 2 in Definition \ref{r system}, $H[T \cup \{u\}]$ contains $K_{r-1}$. Since $u$ is not adjacent to $v$ and $A,B \neq K$, this forces $A=B$ and $|A|=r-3$. Since $H[T]$ is $K_{r-1}$-free, this implies $v$ and $w$ are not adjacent. \newline

For every vertex $u \not \in T$, by condition 2 in Definition \ref{r system}, $H[T \cup \{u\}]$ contains $K_{r-1}$, which implies $A \subseteq \Gamma(u)$, since $|A|=r-3$ and $v$ and $w$ are not adjacent. Since $K$ is a clique and $A \subseteq \Gamma(v) \cap \Gamma(w)$, it follows that all the vertices in $A$ are conical.

\end{proof}

\vspace{1cm}

We will use the following lemma in the proofs of the upper bound in part 4 of Theorem \ref{set estimates} and the lower bound in part 2 of Theorem \ref{edge estimates}. The proof will be similar to that of Lemma \ref{conical stability}, but more involved.

\begin{lemma}\label{r r+1 stability}

Let $r \geq 3$ be an integer and $(H,\mathcal{F})$ be an $(r,r+1)$-system with $|\mathcal{F}|=s$. Then there exists a $(3,2)'$-system or $(3,3)'$-system $(H',\mathcal{F}')$ with $H' \subseteq H$ and $|\mathcal{F}'|=\Omega(s)$.

\end{lemma}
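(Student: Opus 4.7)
The plan is to mirror the proof of Lemma \ref{conical stability}, but with more case analysis since $|T \setminus K| = 3$ rather than $2$. First, fixing $S \in \mathcal{F}$ and choosing an $(r-2)$-clique $K_T \subseteq S \cap T$ for each $T \in \mathcal{F} \setminus \{S\}$ (exists by condition 3 in Definition \ref{r system}), and using that $S$ contains only $\binom{r+1}{r-2} = O(1)$ such cliques, I would extract a single $(r-2)$-clique $K$ contained in $\Omega(s)$ sets $T \in \mathcal{F}$. For each such $T$, write $T' = T \setminus K$ (so $|T'| = 3$) and $A_y = \Gamma(y) \cap K \subsetneq K$ for $y \in T'$ (proper by condition 2 in Definition \ref{r system}). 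A second pigeonhole over the $O(1)$ possible types---the edge pattern of $H[T']$ together with the profile $(A_y)_{y \in T'}$---then yields $\mathcal{F}_K \subseteq \mathcal{F}$ of size $\Omega(s)$ of a common type. Let $W = \{u \in V(H) \setminus K : K \subseteq \Gamma(u)\}$; because $K$ is an $(r-2)$-clique in $K_r$-free $H$, the set $W$ is automatically independent.

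The general recipe will then be to pick a well-chosen $(r-3)$-clique $K'$ and set $H' = H[\Gamma(K') \setminus (K \cup W)]$, which is automatically $K_3$-free since $K'$ is an $(r-3)$-clique in $K_r$-free $H$, and to show that either each triple $T'$ is maximally independent in $H'$ (giving a $(3,3)'$-system) or a suitable non-adjacent pair from each $T$ is (giving a $(3,2)'$-system). The key structural input is: for $u \in V(H) \setminus T$, condition 2 in Definition \ref{r system} applied to $T$ gives an $(r-2)$-clique $K_0 \subseteq T$ with $\{u\} \cup K_0$ a $K_{r-1}$; case-splitting on $K_0$ shows that either $K_0 = K$ (so $u \in W$), or $K_0 = \{y\} \cup L$ for some $y \in T'$ with $L \subseteq A_y$ an $(r-3)$-clique (forcing $|A_y| \geq r-3$), or $K_0$ uses an edge of $H[T']$ (with an analogous intersection condition on the corresponding $A_y$'s). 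A preliminary observation is that if within the chosen type \emph{none} of the non-trivial $K_0$ options is available, then every $u \not\in T$ lies in $W$, so $V(H) = K \cup T' \cup W$; since distinct $T \in \mathcal{F}_K$ produce distinct $T'$'s but share $K$ and $W$, this would force $|\mathcal{F}_K| = O(1)$, a contradiction. Hence within the chosen type at least one non-trivial option is available.

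In the main Case I (when $H[T']$ is independent) this forces some $|A_y| = r-3$; I would further case-split on how many of $A_v, A_w, A_x$ coincide. If all three agree, take $K' = A_v$ to obtain a $(3,3)'$-system from the triples $T'$. If only two coincide, say $A_v = A_w$ of size $r-3$ and $A_x$ different, take $K' = A_v$ and use the pair $\{v,w\}$; here $x \not\in V(H')$ because $A_x \neq A_v$ both have size $\leq r-3$, and a problematic $u \in V(H')$ covered only by $K_0 = \{x\} \cup A_x$ would have $A_v \cup A_x \subseteq \Gamma(u)$, but $A_v$ and $A_x$ are two distinct $(r-3)$-subsets of the $(r-2)$-set $K$, so their union is all of $K$, forcing $u \in W$, a contradiction. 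If all three $A_y$ are distinct, I would instead use the pair $\{v, k\}$ for $k$ the unique element of $K \setminus A_v$, with a parallel argument. Case II, where $H[T']$ has at least one edge but is not a triangle, is handled by the same recipe applied to a non-adjacent pair in $T'$ and the intersection of the corresponding $A_y$'s.

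The hard part will be the triangle subcase of Case II, which is possible only for $r \geq 5$ and imposes the extra constraints $|A_y \cap A_{y'}| \leq r-4$ for each edge and $|A_v \cap A_w \cap A_x| \leq r-5$: here $T'$ contains no non-adjacent pair, so the pair in the $(3,2)'$-system is forced to straddle $K$ and $T'$. The maximal-independence check must then rule out $u$'s covered via the $K_0 = \{y_1, y_2\} \cup L$ option coming from an edge of the triangle, which requires a careful choice of $K'$---likely after a final pigeonhole on which element of $K$ plays the role of the $K$-endpoint of the pair---so that the problematic $u$'s are either removed by the $\Gamma(K')$ restriction or forced into $W$ by a union-of-subcliques argument analogous to the one used in Case I. A concluding bookkeeping step converts the $\Omega(s)$ sets in $\mathcal{F}_K$ into $\Omega(s)$ distinct pairs or triples in the final system $\mathcal{F}'$, using that $K$ is fixed and so distinct $T$'s give distinct $T'$'s.
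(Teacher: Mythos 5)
Your opening pigeonhole (fixing the clique $K$) matches the paper, but after that you diverge substantially: you pigeonhole on the full ``type'' (edge pattern of $H[T']$ plus the profiles $A_y$) and attempt a direct case analysis, whereas the paper uses a cleaner and more powerful dichotomy based on \emph{almost conical} vertices (a vertex of $H[V_A]$ is almost conical if it is non-adjacent to at most two others in $H[V_A]$). In the paper, either there are $\Omega(s)$ sets $T=K\cup\{u,v,w\}$ all three of whose extra vertices are \emph{not} almost conical in the respective $H[V_A]$, $H[V_B]$, $H[V_C]$ --- in which case a short argument using a witness vertex $x\in V_A\setminus(\Gamma(u)\cup T)$ \emph{forces} $A=B=C$, $|A|=r-3$, and that $\{u,v,w\}$ is maximally independent in $H[V_A]$, giving a $(3,3)'$-system directly --- or an almost conical vertex appears in $\Omega(s)$ sets, and since there are only $O(1)$ almost conical vertices per part, a single vertex $u$ can be fixed, reducing to pairs $\{v,w\}$ with $T=K\cup\{u,v,w\}$; the almost-conical trick is applied once more after re-partitioning by $S=K\cup\{u\}$ to get the $(3,2)'$-system. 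This dichotomy forecloses exactly the messy subcases your plan has to confront head-on: the paper never needs to handle profiles where the size-$(r-3)$ sets $A_y$ fail to coincide, nor the triangle pattern for $H[T']$.

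There is a concrete gap in your ``all three $A_y$ distinct'' subcase. You propose the pair $\{v,k\}$ with $k$ the unique element of $K\setminus A_v$. But $A_v$ is fixed across the type, so $k$ is a \emph{single} vertex shared by every pair in $\mathcal{F}'$. In a $(3,2)'$-system every pair is maximally independent: if $\{v_1,k\},\{v_2,k\}\in\mathcal{F}'$ with $v_1\neq v_2$, then since $\{v_2,k\}$ is maximally independent and $v_1\not\sim k$, we must have $v_1\sim v_2$. Hence all the $v$'s are pairwise adjacent in the $K_3$-free $H'$, forcing $|\mathcal{F}'|\leq 2$, not $\Omega(s)$. (One can in fact show this subcase is vacuous for large $s$ --- three such mutually adjacent $v$'s together with $A_v$ would give $K_r$ --- but the argument you give is wrong as written and would have to be replaced by a vacuity proof.) Beyond that, your Case I subcases implicitly assume all three $A_y$ have size $r-3$, whereas the preliminary observation only gives one; profiles where only some $A_y$ attain that size, or where the two coinciding ones have size $<r-3$ while a third has size $r-3$, are not covered. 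Finally you flag the triangle subcase of Case II as ``hard'' and leave it unresolved. So while your setup is right and the plan plausible, the proposal has genuine holes precisely where the paper's almost-conical dichotomy was built to avoid them.
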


\begin{proof}

Let $S \in \mathcal{F}$. Then by condition 3 in Definition \ref{r system}, for every set $T \in \mathcal{F} \setminus \{S\}$, there is a clique $K \subseteq S \cap T$ of size $r-2$. Since there are only $O(1)$ cliques $K \subseteq S$ of size $r-2$, it follows that there is a clique $K$ of size $r-2$ such that there are $\Omega(s)$ sets $T \in \mathcal{F}$ with $K \subseteq T$, as in the proof of Lemma \ref{conical stability}. \newline

For each subset $R \subseteq K$, let $V_R=\{v \in V(H) \setminus K : \Gamma(v) \cap K =R\}$. Note that the $V_R$ partition  $V(H) \setminus K$. By Definition \ref{r t system}, every set $K \subseteq T \in \mathcal{F}$ is of the form $T=K \cup \{u,v,w\}$ for some vertices $u,v,w \in V(H) \setminus K$. Since there are only $O(1)$ subsets $R \subseteq K$, it follows that for some subsets $A, B,C \subseteq K$, there are $\Omega(s)$ sets $T \in \mathcal{F}$ of the form $T=K \cup \{u,v,w\}$ for some vertices $u \in V_A$, $v \in V_B$ and $w \in V_C$, as before. \newline 

As in the proof of Lemma \ref{conical stability}, let us call a vertex in a graph almost conical if it is not adjacent to at most two other vertices. Then either there are $\Omega(s)$ sets $T \in \mathcal{F}$ of the form $T=K \cup \{u,v,w\}$ with $u$, $v$ and $w$ not almost conical in $H[V_A]$, $H[V_B]$ and $H[V_C]$, respectively, or there are $\Omega(s)$ sets $T \in \mathcal{F}$ of the form $T=K \cup \{u,v,w\}$ with $u$, $v$ or $w$ almost conical in $H[V_A]$, $H[V_B]$ or $H[V_C]$, respectively. \newline

In the first case, we show that $A=B=C$, $|A|=r-3$ and $\{u,v,w\}$ is always a maximally independent set in $H[V_A]$. Then by condition 1 in Definition \ref{r system}, $H[V_A]$ is $K_3$-free, so $(H[V_A],\mathcal{F}')$ is the desired $(3,3)'$-system, where $\mathcal{F}'$ is the family of triples $\{u,v,w\} \subseteq V_A$ with $K \cup \{u,v,w\} \in \mathcal{F}$ and $u$, $v$ and $w$ not almost conical in $H[V_A]$. Let $T \in \mathcal{F}$ be of the form $T=K \cup \{u,v,w\}$ for some not almost conical vertices $u \in H[V_A]$, $v \in H[V_B]$ and $w \in H[V_C]$. We need to show that $A=B=C$, $|A|=r-3$ and $\{u,v,w\}$ is a maximally independent set in $H[V_A]$. \newline

We first show that every vertex in $\{u,v,w\}$ is not adjacent to some other vertex in $\{u,v,w\}$. Let us prove this for $u$, with the other cases following by symmetry. Since $u$ is not almost conical in $H[V_A]$, there is a vertex $x \in V_A \setminus \left(\Gamma(u) \cup \{u,v,w\} \right)$. Then by condition 2 in Definition \ref{r system}, $H[T]$ is $K_{r-1}$-free but $H[T \cup \{x\}]$ contains $K_{r-1}$. Since $x$ is not adjacent to $u$, this implies $u$ is not adjacent to $v$ or not adjacent to $w$, for otherwise $\Gamma(x) \cap T \subseteq \Gamma(u)$.  \newline

Hence every vertex in $\{u,v,w\}$ is not adjacent to some other vertex in $\{u,v,w\}$, so without loss of generality, $u$ and $v$ are not adjacent and $v$ and $w$ are not adjacent. Suppose for the sake of contradiction that $u$ is adjacent to $w$. By condition 2 in Definition \ref{r system}, $H[T]$ is $K_{r-1}$-free, so $A,B,C \neq K$ and $|A \cap C| \leq r-4$. Again, since $u$ is not almost conical in $H[V_A]$, there is a vertex $x \in V_A \setminus \left(\Gamma(u) \cup \{u,v,w\} \right)$. Then by condition 2 in Definition \ref{r system}, $H[T]$ is $K_{r-1}$-free but $H[T \cup \{x\}]$ contains $K_{r-1}$. Since $x$ is not adjacent to $u$, $v$ is not adjacent to $w$, $A,B \neq K$ and $|A \cap C| \leq r-4$, this implies $A=B$ and $|A|=r-3$. By symmetry, we also have $C=B$ and $|C|=r-3$, so $A=C$ and $|A|=r-3$, which contradicts $|A \cap C| \leq r-4$. \newline

Hence $\{u,v,w\}$ is an independent set. We now show that $A=B=C$ and $|A|=r-3$. By condition 2 in Definition \ref{r system}, $H[T]$ is $K_{r-1}$-free, so $A,B,C \neq K$. Once again, since $u$ is not almost conical in $H[V_A]$, there is a vertex $x \in V_A \setminus \left(\Gamma(u) \cup \{u,v,w\} \right)$, so by condition 2 in Definition \ref{r system}, $H[T \cup \{x\}]$ contains $K_{r-1}$. Since $x$ is not adjacent to $u$, $v$ is not adjacent to $w$ and $A,B,C \neq K$, this implies that $|A|=r-3$ and either $A=B$ or $A=C$. Without loss of generality, suppose $A=B$. Then by symmetry, we also have $|C|=r-3$ and either $C=A$ or $C=B$, so $A=B=C$ and $|A|=r-3$. \newline

Finally, we show that $\{u,v,w\}$ is maximally independent set in $H[V_A]$. Let $x \in V_A \setminus \{u,v,w\}$. Then by condition 2 in Definition \ref{r system}, $H[T \cup \{x\}]$ contains $K_{r-1}$. Since $A \neq K$ and $\{u,v,w\}$ is an independent set, this implies that $x$ is adjacent to $u$, $v$ or $w$. \newline

Now consider the second case where there are $\Omega(s)$ sets $T \in \mathcal{F}$ of the form $T=K \cup \{u,v,w\}$ with $u$, $v$ or $w$ almost conical in $H[V_A]$, $H[V_B]$ or $H[V_C]$, respectively. As in the proof of Lemma \ref{conical stability}, by condition 1 in Definition \ref{r system}, there are only $O(1)$ almost conical vertices in $H[V_A]$, $H[V_B]$ and $H[V_C]$. Hence there is a vertex $u$ such that there are $\Omega(s)$ sets $T \in \mathcal{F}$ of the form $T=K \cup \{u,v,w\}$ for some vertices $v$ and $w$. \newline

Let $S=K \cup \{u\}$ and for each subset $R \subseteq S$, let $V_R=\{v \in V(H) \setminus S : \Gamma(v) \cap S =R\}$. Note that the $V_R$ partition  $V(H) \setminus S$. Since there are only $O(1)$ subsets $R \subseteq S$, for some subsets $D,E \subseteq S$, there are $\Omega(s)$ sets $T \in \mathcal{F}$ of the form $T=S \cup \{v,w\}$ for some vertices $v \in V_D$ and $w \in V_E$, as before. \newline

Again, by condition 1 in Definition \ref{r system}, there are only $O(1)$ almost conical vertices in $H[V_D]$ and $H[V_E]$. Hence, without loss of generality, there is a set $T \in \mathcal{F}$ of the form $T=S \cup \{v,w\}$ for some not almost conical vertex $v \in H[V_D]$ and vertex $w \in V_E$. Then there is a vertex $x \in V_D \setminus \left(\Gamma(v) \cup \{v,w\}\right)$, so by condition 2 in Definition \ref{r system}, $H[T]$ is $K_{r-1}$-free but $H[T \cup \{x\}]$ contains $K_{r-1}$. Since $x$ is not adjacent to $v$, this implies that $H[D \cap E]$ contains $K_{r-3}$. \newline

Hence, by condition 1 in Definition \ref{r system}, $H[V_D \cup V_E]$ is $K_3$-free, and by condition 2 in Definition \ref{r system}, $\{v,w\}$ is an independent set for all vertices $v \in V_D$ and $w \in V_E$ with $S \cup \{v,w\} \in \mathcal{F}$. Finally, we show that all such independent sets $\{v,w\}$ are in fact maximally independent in $H[V_D \cup V_E]$, so that $\left(H[V_D \cup V_E],\mathcal{F}'\right)$ is the desired $(3,2)'$-system, where $\mathcal{F}'$ is the family of all such pairs $\{v,w\}$. Let $v \in V_D$ and $w \in V_E$ be such that $S \cup \{v,w\} \in \mathcal{F}$ and $x \in \left(V_D \cup V_E\right) \setminus \{v,w\}$. We need to show that $x$ is adjacent to $v$ or $w$. By condition 2 in Definition \ref{r system}, $H\left[S \cup \{v,w\}\right]$ is $K_{r-1}$-free but $H\left[S \cup \{v,w,x\}\right]$ contains $K_{r-1}$, which implies $x$ is adjacent to $v$ or $w$, for otherwise $\Gamma(x) \cap \left(S \cup \{v,w\}\right) \subseteq \Gamma(v) \cap S$ or $\Gamma(x) \cap \left(S \cup \{v,w\}\right) \subseteq \Gamma(w) \cap S$.

\end{proof}

\vspace{1cm}

We will use the following estimate for $s_{3,3}'(m)$ in the proofs of the lower and upper bounds in part 4 of Theorem \ref{set estimates} and the lower bounds in parts 2 and 5 of Theorem \ref{edge estimates}. Since it is of independent interest, we state it as a theorem. 

\begin{theorem}\label{many cases}
$s_{3,3}'(m)=m+O(1)$.
\end{theorem}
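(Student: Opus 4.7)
The lower bound is straightforward: Construction \ref{system construction} provides a $(3,3)'$-system $(H_{3,l},\mathcal{F}_{3,l})$ with $|H_{3,l}|=2l$ and $|\mathcal{F}_{3,l}|=2l$, yielding $s_{3,3}'(2l) \ge 2l$. Applying Lemma \ref{3 prime approx incr} with outer parameter $m$ and inner parameter $m-1$ (and $t=3$) then gives $s_{3,3}'(m) \ge (1-3/(m-1))(m-1) = m-4$ for every sufficiently large $m$, so $s_{3,3}'(m) \ge m - O(1)$.

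The main content is the upper bound. Fix a $(3,3)'$-system $(H,\mathcal{F})$ with $|H|=m$, $|\mathcal{F}|=s$; after deleting any vertex lying in no set of $\mathcal{F}$ (which leaves $(H',\mathcal{F})$ a $(3,3)'$-system), we may assume $s(v) \ge 1$ for every $v$. The essential first step is a codegree bound: any non-adjacent pair $\{u,v\}$ lies in at most two sets of $\mathcal{F}$. For if $\{u,v,w_1\}, \{u,v,w_2\}, \{u,v,w_3\} \in \mathcal{F}$, then the maximality of $\{u,v,w_i\}$ combined with the independence of $\{u,v,w_j\}$ (forcing $w_j \not\sim_H u,v$) yields $w_j \sim_H w_i$ for all $i \ne j$, giving a triangle in $H$, a contradiction. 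The same argument applied inside $\Gamma'(v)$, together with Observation \ref{induction observation}, shows that the pairs of $\mathcal{F}_v$ form an auxiliary graph on $\Gamma'(v)$ of maximum degree at most $2$, so $s(v) = |\mathcal{F}_v| \le |\Gamma'(v)| \le m-1$; as a byproduct this also establishes $s_{3,2}'(m) \le m$.

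I now split into two cases. If $\bigcap_{S \in \mathcal{F}} S \neq \emptyset$, pick a common vertex $v$; then every $S \in \mathcal{F}$ has the form $\{v\} \cup P$ with $P \in \mathcal{F}_v$, so $s = |\mathcal{F}_v| \le s_{3,2}'(m-1) \le m-1$. Otherwise, as in the proof of Lemma \ref{principal intersection}, I select a subfamily $\mathcal{F}^{\star} \subseteq \mathcal{F}$ with $|\mathcal{F}^{\star}|=O(1)$ and $\bigcap_{S \in \mathcal{F}^{\star}} S = \emptyset$. For each other $T \in \mathcal{F}$, the maximality of every $S \in \mathcal{F}^{\star}$ constrains the adjacencies between $T$ and the constant-size vertex set $\bigcup_{S \in \mathcal{F}^{\star}} S$: specifically, each vertex of $T$ not in $S$ must have a neighbour in $S$, and each vertex of $S$ not in $T$ must have a neighbour in $T$. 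Running a case analysis over the intersection patterns $|T \cap S| \in \{0,1,2\}$ for each $S \in \mathcal{F}^{\star}$, in the spirit of the proof of Lemma \ref{r r+1 stability}, and combining with the codegree bound yields $s \le m + O(1)$.

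The main obstacle is the non-intersecting case. Lemma \ref{3 3 stability} treats the intersecting variant $(3,3)$-systems and reduces them essentially to the structure $(H_{2,l},\mathcal{F}_{2,l})$, but that reduction relies crucially on condition~3 of Definition \ref{r system}, which is precisely the condition dropped in a $(3,3)'$-system; two sets of $\mathcal{F}$ may now be disjoint. One is therefore forced to carry out the intersection-pattern analysis on the $O(1)$-many sets of $\mathcal{F}^{\star}$ by hand, and handling every pattern cleanly — while keeping the codegree-based accounting tight enough to reach $m+O(1)$ rather than merely $O(m)$ — is where the bulk of the work lies, presumably the source of the theorem's name ``many cases''.
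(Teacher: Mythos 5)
The lower bound, the codegree bound (each non-adjacent pair lies in at most two sets of $\mathcal{F}$, via the triangle argument), and the treatment of the case $\bigcap_{S\in\mathcal{F}} S \neq \emptyset$ are all correct and match the paper's toolkit; in particular, the auxiliary-graph/max-degree argument giving $s_{3,2}'(m)\le m$ and hence $s\le m-1$ when there is a common vertex is fine. However, the remaining case — which you correctly identify as the bulk of the work — is not actually proved. You write ``running a case analysis over the intersection patterns $|T\cap S| \in \{0,1,2\}$\dots combining with the codegree bound yields $s\le m+O(1)$'', but this is a statement of intent, not an argument, and the codegree bound alone (applied globally) only gives $s=O(m^2)$.

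Moreover, you have missed the structural idea that makes the paper's generic case clean. The paper's split is not on whether $\bigcap_{S\in\mathcal{F}} S$ is empty; it is on three explicit ``bad'' configurations (a triple of pairwise-disjoint-off-$S$ sets $S_v$; a triple $S_v=(S\setminus\{v\})\cup\{v'\}$ with distinct $v'$; and two sets with $|S\cap T|=2$). When none of these occur, the paper shows that \emph{every} set $S\in\mathcal{F}$ contains a vertex $v$ with $s(v)=1$ (if not, pick $S_v\ne S$ with $v\in S_v$ for each $v\in S$; the exclusions force $S\cap S_v=\{v\}$ and the $S_v$ pairwise disjoint, recreating the first bad configuration). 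Assigning each $S$ such a ``private'' vertex yields an injection $\mathcal{F}\to V(H)$ and hence $s\le m$ immediately — no intersection-pattern analysis needed in the generic case. The three non-generic cases then each require a careful, hand-crafted injection/accounting argument to get $m+O(1)$ rather than $O(m)$; for instance, Case~3 of the paper builds an explicit injection $\mathcal{F}'\to (X\cup Y)\setminus(X'\cup Y')$ whose well-definedness already takes a page to verify. So the gap in your proposal is real: the ``private vertex'' injection is the missing pivot, and even with it there is substantial residual case work that your sketch does not supply.
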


\begin{proof}

By properties 1, 2 and 4 in Construction \ref{system construction}, $s_{3,3}'(2l) \geq 2l$ for large enough integers $l$, so $s_{3,3}'(m) \geq m+O(1)$ by Lemma \ref{3 prime approx incr}. We now prove that $s_{3,3}'(m) \leq m+O(1)$. Let $(H,\mathcal{F})$ be a $(3,3)'$-system with $|H|=m$ and $|\mathcal{F}|=s$. We need to show that $s \leq m+O(1)$. We distinguish four cases. \newline

\emph{Case 1:} There is a set $S \in \mathcal{F}$ and for each vertex $v \in S$ there is a set $S_v \in \mathcal{F}$ such that $S \cap S_v=\{v\}$ and the $S_v$ are disjoint.

\begin{centering}
     
\begin{tikzpicture}[scale=0.3]
      
             \filldraw[black] ($(0,0)+(90:2)$) circle (3pt) ;
             \filldraw[black] ($(0,0)+(210:2)$) circle (3pt) ;
             \filldraw[black] ($(0,0)+(330:2)$) circle (3pt) ;
             \draw [black] plot [smooth cycle, tension=0.8] coordinates {($(0,0)+(90:2.5)$)  ($(0,0)+(210:2.5)$)  ($(0,0)+(330:2.5)$)};    
             \filldraw[black] ($(0,4)+(30:2)$) circle (3pt) ;
             \filldraw[black] ($(0,4)+(150:2)$) circle (3pt) ;
             \filldraw[black] ($(0,0)+(210:4)+(150:2)$) circle (3pt) ;
             \filldraw[black] ($(0,0)+(210:4)+(270:2)$) circle (3pt) ;
             \filldraw[black] ($(0,0)+(330:4)+(270:2)$) circle (3pt) ;
             \filldraw[black] ($(0,0)+(330:4)+(30:2)$) circle (3pt) ;
             \draw [black] plot [smooth cycle, tension=0.8] coordinates {($(0,0)+(90:1.5)$) ($(0,4)+(30:2.5)$) ($(0,4)+(150:2.5)$)}; 
             \draw [black] plot [smooth cycle, tension=0.8] coordinates {($(0,0)+(210:1.5)$) ($(210:4)+(150:2.5)$) ($(210:4)+(270:2.5)$)}; 
             \draw [black] plot [smooth cycle, tension=0.8] coordinates {($(0,0)+(330:1.5)$) ($(330:4)+(270:2.5)$) ($(330:4)+(30:2.5)$)}; 
            
\end{tikzpicture}


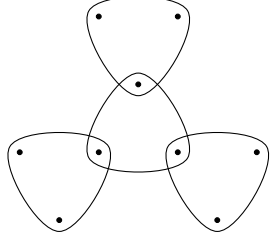
\captionof{figure}{The set $S$ and the sets $S_v$.}

\end{centering}

\vspace{1cm}   

We show that every vertex $v \in S$ is adjacent to all vertices except for those in $S \cup S_v$ and that there are no other edges in $H$ and sets in $\mathcal{F}$. Hence $s=4$. \newline

Let $S=\{a,b,c\}$. By condition 2 in Definition \ref{3 prime system definition}, every vertex  in the induced subgraph between $\{b,c\}$ and $S_a \setminus \{a\}$ has degree at least one. It is easy to check that every bipartite graph with two vertices in each part in which every vertex has degree at least one contains a perfect matching. So let $S_a \setminus \{a\}=\{(a,b),(a,c)\}$, where $(a,b)$ and $(a,c)$ are adjacent to $b$ and $c$, respectively. Similarly, let $S_b \setminus \{b\}=\{(b,a),(b,c)\}$ and $S_c \setminus \{c\}=\{(c,a),(c,b)\}$, where $(b,a)$, $(b,c)$, $(c,a)$ and $(c,b)$ are adjacent to $a$, $c$, $a$ and $b$, respectively. \newline

Since $(a,b)$ is adjacent to $b$, which is adjacent to $(c,b)$, by condition 1 in Definition \ref{3 prime system definition}, $(a,b)$ is not adjacent to $(c,b)$. Similarly, $(a,c)$ is not adjacent to $(b,c)$ and $(b,a)$ is not adjacent to $(c,a)$. Suppose for the sake of contradiction that $(a,b)$ is not adjacent to $c$. Then, since $(a,b)$ is also not adjacent to $(c,b)$, by condition 2 in Definition \ref{3 prime system definition}, $(a,b)$ must be adjacent to $(c,a)$. Hence, since $(a,b)$ is adjacent to $b$, by condition 1 in Definition \ref{3 prime system definition}, $(c,a)$ is not adjacent to $b$. \newline

Therefore, since $(c,a)$ is also not adjacent to $(b,a)$, by condition 2 in Definition \ref{3 prime system definition}, $(c,a)$ must be adjacent to $(b,c)$. Then, since $(c,a)$ is adjacent to $a$, by condition 1 in Definition \ref{3 prime system definition}, $(b,c)$ is not adjacent to $a$. Hence, since $(b,c)$ is also not adjacent to $(a,c)$, by condition 2 in Definition \ref{3 prime system definition}, $(b,c)$ must be adjacent to $(a,b)$. But then $(a,b)$, $(b,c)$ and $(c,a)$ form a triangle, contradicting condition 1 in Definition \ref{3 prime system definition}. \newline

Hence, $(a,b)$ is adjacent to $c$, so by symmetry, every vertex $v \in S$ is adjacent to every vertex in $U \setminus \left(S \cup S_v \right)$, where $U=S \cup S_a \cup S_b \cup S_c$. Then by conditions 1 and 2 in Definition \ref{3 prime system definition}, there are no other edges in $H[U]$. Let $x \in V(H) \setminus U$. Then by condition 1 in Definition \ref{3 prime system definition}, $\Gamma(x)$ is an independent set, and by condition 2 in Definition \ref{3 prime system definition}, $\Gamma(x) \cap S \neq \emptyset$ and $\Gamma(x) \cap S_v \neq \emptyset$ for all $v \in S$, which implies $\Gamma(x) \cap U=S$. Hence $V(H) \setminus U$ is an independent set by condition 1 in Definition \ref{3 prime system definition}, so every vertex $v \in S$ is adjacent to all vertices except for those in $S \cup S_v$ and there are no other edges in $H$. Finally, it is easy to check that the only maximally independent sets of size three in $H$ are $S$ and the $S_v$, so there are no other sets in $\mathcal{F}$ by condition 2 in Definition \ref{3 prime system definition}. \newline

\emph{Case 2:} There is a set $S \subseteq V(H)$ of size three and for each vertex $v \in S$ there is a vertex $v' \not \in S$ such that $S_v:=\left(S \setminus \{v\} \right)\cup \{v'\} \in \mathcal{F}$ and the $v'$ are distinct. \newline

\begin{centering}
     
\begin{tikzpicture}[scale=0.75]
      
             \filldraw[black] ($(0,0)+(90:1)$) circle (1.2pt) ;
             \filldraw[black] ($(0,0)+(210:1)$) circle (1.2pt) ;
             \filldraw[black] ($(0,0)+(330:1)$) circle (1.2pt) ;
             \filldraw[black] ($(0,0)+(150:2)$) circle (1.2pt) ;
             \filldraw[black] ($(0,0)+(270:2)$) circle (1.2pt) ;
             \filldraw[black] ($(0,0)+(30:2)$) circle (1.2pt) ;
             \draw [black] plot [smooth cycle, tension=0.8] coordinates {($(0,0)+(150:2.25)$) ($(0,0)+(90:1)+(30:0.25)$) ($(0,0)+(210:1)+(270:0.25)$)};
             \draw [black] plot [smooth cycle, tension=0.8] coordinates {($(0,0)+(270:2.25)$) ($(0,0)+(210:1)+(150:0.25)$) ($(0,0)+(330:1)+(30:0.25)$)};
             \draw [black] plot [smooth cycle, tension=0.8] coordinates {($(0,0)+(30:2.25)$) ($(0,0)+(330:1)+(270:0.25)$) ($(0,0)+(90:1)+(150:0.25)$)};

\end{tikzpicture}


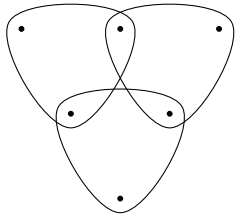
\captionof{figure}{The sets $S_v$.}

\end{centering}

\vspace{1cm}   

By condition 2 in Definition \ref{3 prime system definition}, $v$ is adjacent to $v'$ for all $v \in S$. Let $x \in V(H) \setminus U$, where $U=S \cup S'$ and $S'=\bigcup_{v \in S} \{v'\}$. Then by condition 1 in Definition \ref{3 prime system definition}, $\Gamma(x)$ is an independent set, and by condition 2 in Definition \ref{3 prime system definition}, $\Gamma(x) \cap S_v \neq \emptyset$ for all $v \in S$, which implies that either $\Gamma(x) \cap U=S'$ or $|\Gamma(x) \cap S| \geq 2$. Hence we have a partition $V(H) \setminus S'=X \cup Y$, where $X=\{x \in V(H) \setminus U: \Gamma(x) \cap U=S'\} \cup S$ and $Y=\{x \in V(H) \setminus U: |\Gamma(x) \cap S| \geq 2\}$ are independent sets by conditions 1 and 2 in Definition \ref{3 prime system definition}. \newline

Recall from the proof of Lemma \ref{3 3 stability} that, since a $(3,1)'$-system is a family of conical vertices in a $K_3$-free graph, $s_{3,1}'(m) \leq 2$ for all $m$. For every pair $P$ of vertices in $H$, if we let $W=V(H) \setminus \left[\bigcup_{v \in P} \left( \Gamma(v) \cup \{v\} \right) \right]$ and $\mathcal{F}'=\{T \setminus P : P \subseteq T \in \mathcal{F} \}$, then applying the first part of Observation \ref{induction observation} twice, we obtain that $\left(H[W],\mathcal{F}'\right)$ is a $(3,1)'$-system, so there are at most $2$ sets $T \in \mathcal{F}$ with $P \subseteq T$. \newline

Since there are only $O(1)$ pairs $P \subseteq U$, it follows that there are only $O(1)$ sets $T \in \mathcal{F}$ with $|T \cap U| \geq 2$. Since $X$ is an independent set, by condition 2 in Definition \ref{3 prime system definition}, there is at most one set $T \in \mathcal{F}$ with $T \subseteq X$. Similarly, there is at most one set $T \in \mathcal{F}$ with $T \subseteq Y$. \newline

Suppose $I$ is an independent set, $u \not \in I$, $v \neq w \in I$ and $\{u,v,w\} \in \mathcal{F}$. Then by condition 2 in Definition \ref{3 prime system definition}, $v$ and $w$ are the only vertices in $I$ not adjacent to $u$. It follows that for every vertex $u$ and independent set $I$ with $u \not \in I$ there is at most one set in $\mathcal{F}$ of the form 
$\{u,v,w\}$ for some vertices $v \neq w \in I$. \newline

For every vertex $u \in S'$, since $u$ is adjacent to every vertex in $X \setminus S$, by condition 2 in Definition \ref{3 prime system definition}, every set $T \in \mathcal{F}$ with $T \cap U=\{u\}$ is of the form $T=\{u,v,w\}$ for some vertices $v \neq w \in Y$. Since $Y$ is an independent set, it follows that there are only $O(1)$ sets $T \in \mathcal{F}$ with $T \cap U=\{u\}$ for some $u \in S'$. Also, for every vertex $x \in X$, since $Y$ is an independent set, there is at most one set in $\mathcal{F}$ of the form $\{x,y,z\}$ for some vertices $y \neq z \in Y$. Hence, there are at most $|X|$ sets $T \in \mathcal{F}$ with $|T \cap X|=1$ and $|T \cap Y|=2$. Similarly, there are at most $|Y|$ sets $T \in \mathcal{F}$ with $|T \cap X|=2$ and $|T \cap Y|=1$, so there are at most $|X|+|Y|=m-3$ sets $T \in \mathcal{F}$ with $T \subseteq X \cup Y = V(H) \setminus S'$ and $T \not \subseteq X, Y$. \newline

Putting everything together, we obtain $s \leq m+O(1)$. \newline

\emph{Case 3:} There are sets $S,T \in \mathcal{F}$ with $|S \cap T|=2$. \newline

\begin{centering}
     
\begin{tikzpicture}[scale=0.6]
      
             \filldraw[black] (0,1) circle (1.2pt) ;
             \filldraw[black] (0,-1) circle (1.2pt) ;
             \filldraw[black] (-1.732,0) circle (1.2pt) ;
             \filldraw[black] (1.732,0) circle (1.2pt) ;
             \draw [black] plot [smooth cycle, tension=0.8] coordinates {(0.1,1.25) (0.1,-1.25) (-2,0)};
            \draw [black] plot [smooth cycle, tension=0.8] coordinates {(-0.1,1.25) (-0.1,-1.25) (2,0)};

\end{tikzpicture}


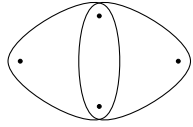
\captionof{figure}{The sets $S$ and $T$.}

\end{centering}

\vspace{1cm} 

Let $S=\{a,b,c\}$ and $T=\{a,b,d\}$. Then by condition 2 in Definition \ref{3 prime system definition}, $c$ is adjacent to $d$. Hence, for every vertex $v \in V(H) \setminus \{a,b,c,d\}$, by condition 1 in Definition \ref{3 prime system definition}, $v$ cannot be adjacent to both $c$ and $d$, so by condition 2 in Definition \ref{3 prime system definition}, $v$ must be adjacent to $a$ or $b$. For every such vertex $v$, pick a vertex in $\{a,b\}$ that $v$ is adjacent to and let $X$ and $Y$ be the sets of vertices $v$ for which $a$ and $b$ were picked, respectively. Then by condition 1 in Definition \ref{3 prime system definition}, $X$ and $Y$ are independent sets. Note that $X$ and $Y$ partition $V(H) \setminus \{a,b,c,d\}$. \newline

Recall from Case 2 that for every pair $P$ of vertices in $H$, there are at most 2 sets $R \in \mathcal{F}$ with $P \subseteq R$. Since there are only $O(1)$ pairs $P \subseteq \{a,b,c,d\}$, it  follows that there are only $O(1)$ sets $R \in \mathcal{F}$ with $|R \cap \{a,b,c,d\}| \geq 2$. Since $X$ is an independent set, by condition 2 in Definition \ref{3 prime system definition}, there is at most one set $R \in \mathcal{F}$ with $R \subseteq X$, as before. Similarly, there is at most one set $R \in \mathcal{F}$ with $R \subseteq Y$. \newline

Recall also from Case 2 that for every vertex $u$ and independent set $I$ with $u \not \in I$ there is at most one set in $\mathcal{F}$ of the form $\{u,v,w\}$ for some vertices $v \neq w \in I$. Hence, there is at most one set in $\mathcal{F}$ of the form $\{c,v,w\}$ for some vertices $v \neq w \in X$, $\{c,v,w\}$ for some vertices $v \neq w \in Y$, $\{d,v,w\}$ for some vertices $v \neq w \in X$ and $\{d,v,w\}$ for some vertices $v \neq w \in Y$. Since $a$ is adjacent to every vertex in $X$, by condition 2 in Definition \ref{3 prime system definition}, every set $R \in \mathcal{F}$ with $R \cap \{a,b,c,d\}=\{a\}$ is of the form $R=\{a,v,w\}$ for some vertices $v \neq w \in Y$. Since $Y$ is an independent set, it follows that there is at most one set $R \in \mathcal{F}$ with $R \cap \{a,b,c,d\}=\{a\}$. Similarly, there is at most one set $R \in \mathcal{F}$ with $R \cap \{a,b,c,d\}=\{b\}$. \newline

For every vertex $x \in X$, since $Y$ is an independent set, there is at most one set in $\mathcal{F}$ of the form $\{x,y,z\}$ for some vertices $y \neq z \in Y$. Let $X' \subseteq X$ be the set of vertices $x \in X$ for which there is such a set. Then there are $|X'|$ sets $R \in \mathcal{F}$ with $|R \cap X|=1$ and $|R \cap Y|=2$. Similarly, letting $Y' \subseteq Y$ be the set of vertices $y \in Y$ for which there is a set $\{y,x,z\} \in \mathcal{F}$ for some vertices $x \neq z \in X$, there are $|Y'|$ sets $R \in \mathcal{F}$ with $|R \cap X|=2$ and $|R \cap Y|=1$, so there are $|X'|+|Y'|=|X' \cup Y'|$ sets $R \in \mathcal{F}$ with $R \subseteq X \cup Y=V(H) \setminus \{a,b,c,d\}$ and $R \not \subseteq X, Y$. \newline

Putting everything together, we obtain $s \leq |X' \cup Y'|+|\mathcal{F}'|+O(1)$, where $\mathcal{F}' \subseteq \mathcal{F}$ is the family of sets in $\mathcal{F}$ of the form $\{v,x,y\}$ for some vertices $v \in \{c,d\}$, $x \in X$ and $y \in Y$. So it suffices to show that $|\mathcal{F}'| \leq |(X \cup Y) \setminus (X' \cup Y')|$. We do this by defining an injection $i : \mathcal{F}' \to (X \cup Y) \setminus (X' \cup Y')$. Let $i\left(\{v,x,y\}\right)=x$ if either $v=c$ and $x \not \in X'$ or $v=d$ and $y \in Y'$ and let $i\left(\{v,x,y\}\right)=y$ if either $v=c$ and $x \in X'$ or $v=d$ and $y \not \in Y'$. \newline

We need to show that for all $\{v,x,y\} \in \mathcal{F}'$, we cannot have both $x \in X'$ and $y \in Y'$, so that $i$ indeed maps into $(X \cup Y) \setminus (X' \cup Y')$. Suppose for the sake of contradiction that $x \in X'$ and $y \in Y'$ for some $\{v,x,y\} \in \mathcal{F}'$. Let $Q$ and $R$ be the unique sets in $\mathcal{F}$ with $Q \cap X=\{x\}$, $|Q \cap Y|=2$, $R \cap Y=\{y\}$ and $|R \cap X|=2$. By condition 2 in Definition \ref{3 prime system definition}, $x$ and $y$ are not adjacent. Hence, since $Q$ consists of $x$ and the two vertices in $Y$ not adjacent to $x$, we must have $y \in Q$. Similarly, $x \in R$. But then $\{v,x,y\}$, $Q$ and $R$ are three distinct sets in $\mathcal{F}$ containing the pair $\{x,y\}$, a contradiction. \newline

Finally, we show that $i$ is injective. Suppose for the sake of contradiction that $i\left(\{v,x,y\}\right)=i\left(\{v',x',y'\}\right)$ for some $\{v,x,y\} \neq \{v',x',y'\} \in \mathcal{F}'$. Without loss of generality, suppose $i\left(\{v,x,y\}\right)=i\left(\{v',x',y'\}\right) \in X$. Then $x=i\left(\{v,x,y\}\right)=i\left(\{v',x',y'\}\right)=x'$, either $v=c$ or $v=d$ and $y \in Y'$, and either $v'=c$ or $v'=d$ and $y' \in Y'$. Suppose first that $v=v'$. Then $y \neq y'$ and $\{v,x,y\}, \{v,x,y'\} \in \mathcal{F}$, so by condition 2 in Definition \ref{3 prime system definition}, $y$ is adjacent to $y'$. This is a contradiction, since $Y$ is an independent set. \newline

Now suppose $v \neq v'$. Then without loss of generality, $v=c$, $v'=d$ and $y' \in Y'$. Let $R$ be the unique set in $\mathcal{F}$ with $R \cap Y=\{y'\}$ and $|R \cap X|=2$. Since $\{d,x,y'\} \in \mathcal{F}$, by condition 2 in Definition \ref{3 prime system definition}, $x$ is not adjacent to $y'$. Since $R$ consists of $y'$ and the two vertices in $X$ not adjacent to $y'$, we must have $R=\{y',x,x'\}$ for some vertex $x' \in X \setminus \{x\}$. Then $\{d,x,y'\}, \{y',x,x'\} \in \mathcal{F}$, so by condition 2 in Definition \ref{3 prime system definition}, $d$ and $x'$ are adjacent. \newline

Hence, for every vertex $w \in V(H) \setminus \{d,x,x',y'\}$, by condition 1 in Definition \ref{3 prime system definition}, $w$ cannot be adjacent to both $d$ and $x'$, so by condition 2 in Definition \ref{3 prime system definition}, $w$ must be adjacent to $x$ or $y'$. In particular, $c$ must be adjacent to $x$ or $y'$. But $\{c,x,y\}$ is in $\mathcal{F}$ and therefore an independent set by condition 2 in Definition \ref{3 prime system definition}, so $c$ must be adjacent to $y'$, which implies $y \neq y'$. Then $y \in V(H) \setminus \{d,x,x',y'\}$, so $y$ must be adjacent to $x$ or $y'$, which is impossible, since $\{c,x,y\}$ and $Y$ are independent sets. \newline

\emph{Case 4:} Cases 1, 2 and 3 do not hold. \newline

We show that for every set $S \in \mathcal{F}$ there is a vertex $v \in S$ with $s(v)=1$. Then picking such a vertex for every set in $\mathcal{F}$ gives an injection $\mathcal{F} \to V(H)$, so $s \leq m$. Suppose for the sake of contradiction that there is a set $S \in \mathcal{F}$ with $s(v) \geq 2$ for all $v \in S$. Then for every $v \in S$, there is a set $S_v \in \mathcal{F} \setminus \{S\}$ with $v \in S_v$. Since Case 3 does not hold, $S \cap S_v=\{v\}$ for all $v \in S$. Since Cases 2 and 3 do not hold, the $S_v$ are disjoint. But then Case 1 holds, which is a contradiction.

\end{proof}

\begin{rem}
The similarities between the proofs in the different cases perhaps suggest that the number of cases in the proof can be reduced, but the author has not found a way of doing this. 
\end{rem}

\vspace{1cm}

Finally, we prove the following lemma, which we will use in the proof of the lower bound in part 2 of Theorem \ref{edge estimates}.

\begin{lemma}\label{3 3 edge result}
Let $(H,\mathcal{F})$ be a $(3,3)'$-system with $|H|=m$, $e(H)=e$ and $|\mathcal{F}|=s$ large. Then $e \gg m^{1/2}s$. 
\end{lemma}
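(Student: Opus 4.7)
The plan is to apply part 2 of Theorem~\ref{edge estimates 3 t to t+1 relation} (with $t = 3$), which yields an integer $k \geq ms/(2e+m)$ satisfying $e \geq e_{3,3}(k) = k^2 - k + O(1)$ by part 1 of Theorem~\ref{edge estimates}. I would also record the trivial bound $e \geq m - 3$: for any $S \in \mathcal{F}$, each of the $m-3$ vertices outside $S$ contributes at least one edge to $S$ by maximal independence. These two tools already handle two regimes. If $e \leq m$ then $2e+m \leq 3m$, so $k \geq s/3$ and hence $e \gtrsim s^2$; combined with $e \leq m$ this forces $s = O(m^{1/2})$, whence $m^{1/2} s = O(m) = O(e)$. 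If $e > m$ then $k \geq ms/(3e)$ gives $e^3 \gtrsim (ms)^2$, so $e \gtrsim (ms)^{2/3}$, which dominates $m^{1/2} s$ whenever $s \leq m^{1/2}$.

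The substantive case is $s > m^{1/2}$. By Theorem~\ref{many cases} we have $s \leq m + O(1)$, so $m$ and $s$ are of the same order and the target becomes $e \gtrsim s^{3/2}$. I would fix $k^\ast = \max_v s(v)$ and split into three sub-cases. If $k^\ast \leq c_1 s^{1/2}$ for a small constant $c_1$, then combining the coverage inequality $\sum_v s(v) d(v) \geq s(m-3) \gtrsim s^2$ with $\sum_v s(v) d(v) \leq 2ek^\ast$ yields $e \gtrsim s^2/k^\ast \gtrsim s^{3/2}$. If $k^\ast \geq s^{3/4}$, then $e \geq e_{3,3}(k^\ast) \gtrsim (k^\ast)^2 \geq s^{3/2}$ directly. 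In the intermediate range $c_1 s^{1/2} < k^\ast < s^{3/4}$, I would apply Lemma~\ref{3 3 stability} to the $(3,3)$-system $(H, \mathcal{F}_{v^\ast})$ (restricted to the vertices it touches); this forces this subsystem to coincide with $(H_{2, 2k^\ast}', \mathcal{F}_{2, 2k^\ast}')$, placing $(k^\ast)^2 - k^\ast$ edges inside $V_{v^\ast}'$. I would then treat the residual $(3,3)'$-system on $V(H) \setminus V_{v^\ast}'$ by induction on the number of sets, carefully accounting for the sets of $\mathcal{F}$ that meet $V_{v^\ast}' \setminus \{v^\ast\}$ but do not contain $v^\ast$, using the pair-multiplicity bound $|\{S \in \mathcal{F} : P \subseteq S\}| \leq 2$ that appears in Case~2 of the proof of Theorem~\ref{many cases}.

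The hard part will be the bookkeeping in the intermediate range, since the edges must be distributed among the structured subgraph, the inductive residual, and the coverage demands of the sets meeting $V_{v^\ast}' \setminus \{v^\ast\}$. A useful supplementary inequality here is obtained from the sum $\sum_{S = \{a,b,c\}} (d(a) + d(b) + d(c))^2$: Cauchy--Schwarz gives the lower bound $\sum_S \gamma_S^2 \geq s(m-3)^2 \gtrsim s^3$, while expanding and using the $K_3$-free bound $\sum_v d(v)^2 \leq me$ together with the pair-multiplicity bound yields the upper bound $\sum_S \gamma_S^2 \lesssim k^\ast m e + e^2$; solving the resulting quadratic for $e$ in terms of $k^\ast, m, s$ gives a complementary pinch on $e$ that, together with the stability-based edge count inside $V_{v^\ast}'$ and the inductive contribution from outside, should close the remaining gap.
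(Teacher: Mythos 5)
Your proposal gets the two peripheral regimes ($e \leq m$, and $e > m$ with $s \leq m^{1/2}$) correctly, and you have identified some of the right ingredients for the hard case (the stability lemma, the pair-multiplicity bound, a Cauchy--Schwarz idea). But there is a genuine gap, and along the way a small but consequential error.

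The error: once $s > m^{1/2}$, it does not follow that $m$ and $s$ are of the same order. Theorem~\ref{many cases} gives $s \leq m + O(1)$, so $s$ ranges over the whole interval $(m^{1/2}, m]$. Consequently the target stays $e \gg m^{1/2}s$, not the weaker $e \gg s^{3/2}$. This matters: your sub-case A (small $k^\ast$) and sub-case B (large $k^\ast$), when pushed to their natural thresholds, give $e \gg ms/k^\ast$ and $e \gg (k^\ast)^2$ respectively, which together cover the target only when $k^\ast \lesssim m^{1/2}$ or $k^\ast \gtrsim m^{1/4}s^{1/2}$. When $s > m^{1/2}$ those two thresholds leave a nonempty window (for instance, with $s \asymp m$ the window is $k^\ast \in (m^{1/2}, m^{3/4})$), and neither bound reaches $m^{1/2}s$ there. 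Your supplementary inequality $\sum_S \gamma_S^2 \lesssim k^\ast m e + e^2$ versus $\sum_S \gamma_S^2 \gtrsim m^2 s$ unfortunately reduces, when $k^\ast$ is large, to the same coverage bound $e \gtrsim ms/k^\ast$ and gives nothing new in the window. So the intermediate range is not closed, and your own description (``the hard part will be the bookkeeping'') concedes this.

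What the paper does instead is qualitatively different, and the missing idea is structural rather than computational. Rather than focusing on a single vertex $v^\ast$ of maximal $s$-value, the paper picks a vertex $v$ of \emph{minimum degree} ($d(v) \ll e/m$), so that \emph{all} sets pass through the small closed neighbourhood $\Gamma(v)\cup\{v\}$; the heavy vertices $W$ inside this neighbourhood then carry almost all of $\sum s(w)$, and crucially $|W| \ll e/m$. For each $w \in W$ the stability lemma makes $\mathcal{F}_w$ the matching-structure $(H_{2,l}',\mathcal{F}_{2,l}')$, and -- this is the key structural observation you are missing -- in each matched pair at least one vertex has $s$-value $O(1)$ (otherwise one produces a configuration forcing Case~1 of Theorem~\ref{many cases}, contradicting $s$ large). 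This places $\Omega(s(w)^2)$ edges in the special edge set $E'$ (edges with a small-$s$ endpoint), each edge of $E'$ is charged to only $O(1)$ vertices $w$, and Cauchy--Schwarz over the small set $W$ gives $e \geq |E'| \gg \sum_{w\in W} s(w)^2 \gg s^2/|W| \gg ms^2/e$, whence $e \gg m^{1/2}s$. Without the min-degree vertex (to make $|W|$ small) and the $E'$ observation (to make the edge charges almost disjoint), the Cauchy--Schwarz inequality has nothing to bite on; your outline lacks both, and the proposed induction on the residual system would also need to handle the cross-edges and the sets straddling $V_{v^\ast}'$, which is not controlled in your sketch.
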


\begin{proof}

Note that $m$ is large, since $s$ is large, and that $e \geq m-3 \gg m$, since by condition 2 in Definition \ref{3 prime system definition}, every vertex outside a set $S \in \mathcal{F}$ is adjacent to a vertex in $S$. Since $\sum_{v \in H}d(v)=2e$, there is a vertex $v$ with $d(v) \ll e/m$. Then by condition 2 in Definition \ref{3 prime system definition}, every set in $\mathcal{F}$ contains a vertex in $\Gamma(v) \cup \{v\}$, so by a union bound, $s \leq \sum_{w \in \Gamma(v) \cup \{v\}} s(w)$. Let $W=\{w \in \Gamma(v) \cup \{v\} : s(w)=\omega(1) \}$. Then $|\Gamma(v) \cup \{v\}| = d(v)+1 \ll e/m$, so $|W| \ll e/m$ and $\sum_{w \in W} s(w) \geq s-O(e/m)$. If $e \gg ms$, there is nothing to prove, so we may assume $e=o(ms)$, so that $\sum_{w \in W} s(w) \gg s$. \newline

Let $E' \subseteq E(H)$ be the set of edges $xy$ with $s(x)=O(1)$ or $s(y)=O(1)$, $e'=|E'|$ and $Z$ be the set of pairs $(w,xy)$, where $w \in W$ and $xy \in E'$, for which there are sets $S,T \in \mathcal{F}$ with $\{w,x\} \subseteq S$ and $\{w,y\} \subseteq T$. We double count the number of pairs in $Z$. On the one hand, for all $xy \in E'$, without loss of generality $s(x)=O(1)$, so there are only $O(1)$ sets $S \in \mathcal{F}$ with $\{w,x\} \subseteq S$ and hence only $O(1)$ vertices $w$ with $(w,xy) \in Z$. Hence $|Z| \ll e'$. \newline

On the other hand, for each $w \in W$, let $V_w \subseteq V(H)$ be the set of vertices $x$ for which there is a set $S \in \mathcal{F}$ with $\{w,x\} \subseteq S$ and let $\mathcal{F}_w=\{S \in \mathcal{F} : w \in S\}$. Then it is easy to see that, since $(H,\mathcal{F})$ is a $(3,3)'$-system, $(H[V_w],\mathcal{F}_w)$ is a $(3,3)$-system. Since $s(w)=\omega(1)$, by Lemma \ref{3 3 stability}, $(H[V_w],\mathcal{F}_w)=(H_{2,l}',\mathcal{F}_{2,l}')$ for some even $l$. By property 4 in Construction \ref{system construction}, $l=2s(w)$. Since $w$ is in all the sets in $\mathcal{F}_w$ and the isolated vertex of $H_{2,l}'$ is the only vertex in all the sets in $\mathcal{F}_{2,l}'$, $w$ must be the isolated vetex of $H_{2,l}'$. \newline

Recall that in Case 4 of the proof of Theorem \ref{many cases} we showed that if Cases 1, 2 and 3 do not hold then for every set $S \in \mathcal{F}$ there is a vertex $v \in S$ with $s(v)=1$. We now show that it is always the case that for every set $S \in \mathcal{F}$ there is a vertex $v \in S$ with $s(v)=O(1)$. Suppose for the sake of contradiction that there is a set $S \in \mathcal{F}$ with $s(v)=\omega(1)$ for all $v \in S$. Recall from the proof of Theorem \ref{many cases} that for every pair $P$ of vertices in $H$, there are at most $2$ sets $T \in \mathcal{F}$ with $P \subseteq T$. \newline

Let $S=\{a,b,c\}$. Since $s(a)=\omega(1)$ and there are at most $2$ sets $T \in \mathcal{F}$ with $\{a,b\} \subseteq T$ and at most $2$ sets $T \in \mathcal{F}$ with $\{a,c\} \subseteq T$, there is a set $S_a \in \mathcal{F}$ with $S_a \cap S=\{a\}$. Then, since $s(b)=\omega(1)$ and for every vertex $v \in \left(S \cup S_a\right) \setminus \{b\}$ there are at most $2$ sets $T \in \mathcal{F}$ with $\{b,v\} \subseteq T$, there is a set $S_b \in \mathcal{F}$ with $S_b \cap \left(S \cup S_a\right)=\{b\}$. Similarly, since $s(c)=\omega(1)$, there is a set $S_c \in \mathcal{F}$ with $S_c \cap \left(S \cup S_a \cup S_b\right)=\{c\}$. But then $S$, $S_a$, $S_b$ and $S_c$ are as in Case 1 of the proof of Theorem \ref{many cases}, so $s=4$, contradicting $s(a),s(b),s(c)=\omega(1)$. \newline

Hence, for all matched vertices $x$ and $y$ in $H_{2,l}$, since $\{w,x,y\} \in \mathcal{F}$ and $s(w)=\omega(1)$, either $s(x)=O(1)$ or $s(y)=O(1)$. Let $X$ and $Y$ be the two parts of $H_{2,l}$. It follows that for all $x \neq x' \in X$ and $y \neq y' \in Y$ with $x$ and $y$ matched and $x'$ and $y'$ matched, either $xy' \in E'$ or $x'y \in E'$. Hence, since $l=2s(w)$, at least $\binom{s(w)}{2}$ edges in $H_{2,l}$ are in $E'$, so there are $\Omega(s(w)^2)$ edges $xy$ with $(w,xy) \in Z$. Hence $|Z| \gg \sum_{w \in W} s(w)^2$.  \newline

Putting everything together, we obtain

$$e \ \geq \ e' \ \gg \ |Z| \ \gg \ \sum_{w \in W} s(w)^2 \ \gg \ \frac{s^2}{e/m} $$

by the Cauchy-Schwarz inequality, since $\sum_{w \in W} s(w) \gg s$ and $|W| \ll e/m$. This implies $e \gg m^{1/2}s$. \newline
\end{proof}

\begin{rem}

In the proof we showed that if $(H,\mathcal{F})$ is a $(3,3)'$-system then for every set $S \in \mathcal{F}$ there is a vertex $v \in S$ with $s(v)=O(1)$. Picking such a vertex for every set in $\mathcal{F}$ gives a function $\mathcal{F} \to V(H)$ such that the inverse image of every vertex has size $O(1)$. This gives a proof of the estimate $s_{3,3}'(m)=\Theta(m)$ that is shorter and simpler than that of Theorem \ref{many cases}, only needing to consider Case 1. For all of our applications of Theorem \ref{many cases}, this weaker estimate is sufficient, except for in the proof of the lower bound in part 5 of Theoerem \ref{edge estimates}, where we will need the precise constant factor.

\end{rem}

\vspace{1cm}

We are now ready to prove Theorems \ref{set estimates} and \ref{edge estimates}. We first prove Theorem \ref{set estimates}, which we will use in the proofs of the lower and upper bounds in part 3 of Theorem \ref{edge estimates}, the lower bounds in parts 1 and 2 of Theorem \ref{twin free result}, and the lower bounds in parts 3 and 4 of Theorem \ref{twin free min deg result}.

\begin{repth11}

We have the following estimates for $s_{r,t}(m)$.

\begin{enumerate}
\item For all integers $r \geq 3$, $s_{r,r-2}(m)=1$ for large enough $m$.
\item For all integers $r \geq 3$, $s_{r,r-1}(m)=2$ for large enough $m$.
\item For all integers $r \geq 3$, $s_{r,r}(m)=\lfloor (m-r+2)/2 \rfloor$ for large enough $m$.
\item For all integers $r \geq 3$, $s_{r,r+1}(m)=\Theta(m)$.
\item For all integers $r \geq 3$ and $t \geq r+2$,
$$m^{(t-r+2)/2} \ll s_{r,t}(m) \ll m^{t-r+2} \ . $$ 
\item For all integers $t \geq 5$,
$$m^{(t-1)/2} \ll s_{3,t}(m) \ll m^{t-3} \ . $$
\end{enumerate}

\end{repth11}

\begin{proof}

We first prove part 1. By Observation \ref{obs 1}, $s_{r,r-2}(m)$ is the maximum size of an independent set of vertices of degree $r-2$ with distinct neighbourhoods in a $K_r$-saturated graph whose complement has size $m$. It is easy to check that the only $K_r$-saturated graphs with a vertex of degree $r-2$ are the ones obtained by adding $r-2$ conical vertices to a graph with no edges, from which it easily follows that $s_{r,r-2}(m)=1$ for $m \geq r-2$. \newline

We now prove part 2. By Observation \ref{obs 1} again, $s_{r,r-1}(m)$ is the maximum size of an independent set of vertices of degree $r-1$ with distinct neighbourhoods in a $K_r$-saturated graph whose complement has size $m$. It is easy to check that every $K_r$-saturated graph with a vertex of degree $r-1$ is a blow-up of either $K_{r-1}$ or $C_5^{r-3}$, from which it easily follows that $s_{r,r-1}(m)=2$ for $m \geq r$. \newline

Next, we prove part 3. By Lemma \ref{lemma conical systems sets}, the stronger version of Theorem \ref{set estimates 3 t to t+1 relation} in this section, and properties 1, 2 and 4 in Construction \ref{system construction}, 

$$ s_{r,r}(m) \geq s_{3,3}(m-r+3) \geq s_{3,2}'(m-r+2) \geq \lfloor (m-r+2)/2 \rfloor $$
 
for large enough $m$. We now prove the upper bound. Let $(H,\mathcal{F})$ be an $(r,r)$-system with $|H|=m$ large and $|\mathcal{F}|=s$. We need to show that $s \leq \lfloor (m-r+2)/2 \rfloor$. If $s=O(1)$, the inequality holds, since $m$ is large, so we may assume $s=\omega(1)$.
Then by Lemma \ref{conical stability}, there is a set $C$ of $r-3$ conical vertices in $H$, which implies that $(H,\mathcal{F})=(\left(H \setminus C\right)^{r-3},\mathcal{G}^{r-3})$ and $(H \setminus C,\mathcal{G})$ is a $(3,3)$-system, where $\mathcal{G}=\{S \setminus C : S \in \mathcal{F}\}$, since $m$ is large (see section \ref{conical vertices second section}). Let $A=\{v \in V(H) \setminus C: s(v) \geq 1\}$. Then by Lemma \ref{3 3 stability}, $(H[A],\mathcal{G})=(H_{2,l}',\mathcal{F}_{2,l}')$ for some even $l$, so 
$s= l/2 \leq \lfloor (m-r+2)/2 \rfloor$ by properties 2 and 4 in Construction \ref{system construction}. \newline

We now prove part 4. By Lemma \ref{lemma conical systems sets}, the stronger version of Theorem \ref{set estimates 3 t to t+1 relation} and Theorem \ref{many cases},
$$s_{r,r+1}(m) \geq s_{3,4}(m-r+3) \geq s_{3,3}'(m-r+2) = m+O(1) \ .$$
We now prove the upper bound. Let $(H,\mathcal{F})$ be an $(r,r+1)$-system with $|H|=m$ and $|\mathcal{F}|=s$. We need to show that $s \ll m$. By Lemma \ref{r r+1 stability}, there is a $(3,2)'$-system or $(3,3)'$-system $(H',\mathcal{F}')$ with $H' \subseteq H$ and $|\mathcal{F}'|=\Omega(s)$. Let $m'=|H'|$ and $s'=|\mathcal{F}'|$. Then $m' \leq m$ and $s' \gg s$. By Theorem \ref{many cases} and the stronger version of Theorem \ref{set estimates 3 t to t+1 relation}, $s_{3,3}'(n)=n+O(1)$ and $s_{3,2}'(n) \leq s_{3,3}(n+1) \leq s_{3,3}'(n+1)=n+O(1)$. Hence $s \ll s' \leq m'+O(1) \leq m+O(1)$.  \newline

Next, we prove part 5. The lower bound follows from Lemma \ref{lemma conical systems sets} and  the lower bound in part 6. We now prove the upper bound. Let $(H,\mathcal{F})$ be an $(r,t)$-system with $|H|=m$ large. We need to show that $|\mathcal{F}| \ll m^{t-r+2}$. A celebrated result of Erd\H{o}s, Ko and Rado (Theorem 2 (b) in \cite{Erdos Ko Rado}) states that for all integers $t \geq k \geq 0$, if $\mathcal{F}$ is a family of subsets of size $t$ of a set of size $m$ such that the intersection of any two distinct sets in the family has size at least $k$, then $|\mathcal{F}| \leq \binom{m-k}{t-k}$ if $m$ is large enough. Hence, by condition 3 in Definition \ref{r system}, $|\mathcal{F}| \leq \binom{m-r+2}{t-r+2} \asymp m^{t-r+2}$. \newline

Finally, we prove part 6. The upper bound follows from part 4 with $r=3$ by iterating the upper bound in Theorem \ref{set estimates 3 t to t+1 relation}. We now prove the lower bound. By the stronger version of Theorem \ref{set estimates 3 t to t+1 relation}, this is equivalent to the statement that $s_{3,t}'(m) \gg m^{t/2}$ for all integers $t \geq 4$. By properties 1, 2 and 4 in Construction \ref{system construction}, for large enough integers $l$, $s'_{3,4}\left(2l^2\right) \geq l^2(l-1)^2/2 $ and $s_{3,t}'\left(tl^2\right) \geq l^t$ for $t \geq 5$, so $s_{3,t}'(m) \gg m^{t/2}$ for $t \geq 4$ by Lemma \ref{3 prime approx incr}. \newline

\end{proof}

\vspace{1cm}

We now prove Theorem \ref{edge estimates}, which we will use in the proofs of the lower and upper bounds in Theorem \ref{connection with edges}, the lower bound in part 2 of Theorem \ref{twin free result}, and the lower bounds in parts 1, 2 and 4 and the upper bounds in parts 1 and 2 of Theorem \ref{twin free min deg result}.

\begin{repth17}
We have the following estimates for $e_{r,t}(s)$ and $e_{r,t}'(s)$.

\begin{enumerate}
\item For all integers $r \geq 3$, $e_{r,r}(s)=e_{r,r}'(s)=s^2+(2r-7)s+\binom{r-2}{2}$ for large enough $s$.
\item For all integers $r \geq 3$, $e_{r,r+1}(s), e_{r,r+1}'(s)=\Theta\left(s^{3/2}\right)$.
\item For all integers $r \geq 3$ and $t \geq r+2$,
$$s^{1/(t-r+2)} \ll e_{r,t}(s) \leq  e'_{r,t}(s)  \ll s^{4/(t-r+2)} \ . $$ 
\item For all integers $t \geq 5$,
$$s^{2/(t-3)} \ll e_{3,t}(s) \leq  e'_{3,t}(s) \ll s^{4/(t-1)} \ . $$
\item We have
$$\frac{2}{3} s+o(s) \leq e_{3,5}(s) \leq e_{3,5}'(s) \leq 2s+o(s) \ . $$
\end{enumerate}
\end{repth17}

\begin{proof}

We first prove part 1. By Lemma \ref{lemma conical systems edges} and properties 2 through 5 in Construction \ref{system construction}, 

$$e_{r,r}'(s) \leq e(H_{2,2s}')+(r-3)|H_{2,2s}'|+\binom{r-3}{2} = s^2+(2r-7)s+\binom{r-2}{2} $$
 
for large enough $s$. We now prove the lower bound. Let $(H,\mathcal{F})$ be an $(r,r)$-system with $|\mathcal{F}|=s$ large. We need to show that $e(H) \geq s^2+(2r-7)s+\binom{r-2}{2}$. Note that $|H|$ is large, since $s$ is large. As in the proof of Theorem \ref{set estimates}, by Lemma \ref{conical stability}, there is a set $C$ of $r-3$ conical vertices in $H$, which implies that $(H,\mathcal{F})=(\left(H \setminus C\right)^{r-3},\mathcal{G}^{r-3})$ and $(H \setminus C,\mathcal{G})$ is a $(3,3)$-system, where $\mathcal{G}=\{S \setminus C : S \in \mathcal{F}\}$, since $|H|$ is large (see section \ref{conical vertices second section}). Let $A=\{v \in V(H) \setminus C: s(v) \geq 1\}$. Then by Lemma \ref{3 3 stability}, $(H[A],\mathcal{G})=(H_{2,l}',\mathcal{F}_{2,l}')$ for some even $l$, as before, so 

$$e(H) = e\left(H \setminus C\right)+(r-3)|H \setminus C|+\binom{r-3}{2} \geq s^2+(2r-7)s+\binom{r-2}{2} $$

by properties 2, 3 and 4 in Construction \ref{system construction}. \newline

We now prove part 2. We first prove the the lower bound. Let $(H,\mathcal{F})$ be an $(r,r+1)$-system with $|\mathcal{F}|=s$. We need to show that $e(H) \gg s^{3/2}$. By Lemma \ref{r r+1 stability}, there is a $(3,2)'$-system or $(3,3)'$-system $(H',\mathcal{F}')$ with $H' \subseteq H$ and $|\mathcal{F}'|=\Omega(s)$. Since $e(H) \geq e(H')$ and $e_{3,2}''(S) \geq e_{3,3}(S) \geq e_{3,3}''(S)$ by the stronger version of Theorem \ref{edge estimates 3 t to t+1 relation} in this section, it suffices to show that $e_{3,3}''(S) \gg S^{3/2}$. Let $(H,\mathcal{F})$ be a $(3,3)'$-system with $|H|=m$, $e(H)=e$ and $|\mathcal{F}|=s$ large. We need to show that $e \gg s^{3/2}$. By Theorem \ref{many cases}, $s \ll m$, and by Lemma \ref{3 3 edge result}, $e \gg m^{1/2}s$, from which $e \gg s^{3/2}$ follows. \newline

We now prove the upper bound. By Corollary \ref{conical lemma edges asymp}, $e_{r,r+1}'(s) \ll e'_{3,4}(s)$, so it suffices to prove the special case $r=3$. Let $s$ be a large integer. We need to construct a maximal $(3,4)$-system $(H,\mathcal{F})$ with $|\mathcal{F}|=s$ and $e(H) \ll s^{3/2}$. Let $l$ be the smallest integer with $\binom{l}{2} \geq s$. Then $l \asymp s^{1/2}$. Let $X$ be a set of size $l$ and $Y \subseteq \binom{X}{2}$ be a set of size $s$, where $\binom{X}{2}$ is the set of all pairs $P \subseteq X$. Let $H'$ be the bipartite graph with parts $X$ and $Y$ where $x \in X$ is adjacent to $P \in Y$ if and only if $x \not \in P$ and $\mathcal{F}'$ be the family of triples consisting of a pair $\{x,y\} \in Y$ and the two elements $x,y \in X$. Then it is easy to check that $(H',\mathcal{F}')$ is a $(3,3)'$-system with $|\mathcal{F}'|=s$ and $e(H') \asymp s^{3/2}$. Let $(H,\mathcal{F})$ be the $(3,4)$-system obtained from $(H',\mathcal{F}')$ using the second part of Observation \ref{induction observation}. Then it is easy to check that $(H,\mathcal{F})$ is in fact a maximal $(3,4)$-system with $|\mathcal{F}|=|\mathcal{F}'|$ and $e(H)=e(H')$. \newline

We now use part 5 of Theorem \ref{set estimates} and Lemma \ref{set to edge lemma} to prove part 3. We first prove the lower bound. Let $(H,\mathcal{F})$ be an $(r,t)$-system with $|H|=m$ and $|\mathcal{F}|=s$. We need to show that $e(H) \gg s^{1/(t-r+2)}$. By the upper bound in part 5 of Theorem \ref{set estimates}, $s \ll m^{t-r+2}$. Hence $e(H) \geq m-t \gg s^{1/(t-r+2)}$ by Lemma \ref{set to edge lemma}. We now prove the upper bound. Let $s$ be a large integer. We need to construct a maximal $(r,t)$-system $(H,\mathcal{F})$ with $|\mathcal{F}|=s$ and $e(H) \ll s^{4/(t-r+2)}$. Let $m$ be the smallest integer with $s_{r,t}(m) \geq s$. By the lower bound in part 5 of Theorem \ref{set estimates}, $m \ll s^{2/(t-r+2)}$. Take an $(r,t)$-system $(H',\mathcal{F}')$ with $|H'|=m$ and $|\mathcal{F}'|=s_{r,t}(m)$ and pick a subfamily $\mathcal{F} \subseteq \mathcal{F}'$ with $\left|\mathcal{F}\right|=s$. Since $(H',\mathcal{F}')$ is an $(r,t)$-system, so is $(H',\mathcal{F})$, so we can add edges to $H'$ to obtain a graph $H$ such that $(H,\mathcal{F})$ is a maximal $(r,t)$-system. Then $e(H) \leq \binom{m}{2} \ll s^{4/(t-r+2)}$ by Lemma \ref{set to edge lemma}. \newline

Next, we prove part 4. We have just proved the upper bound. We prove the lower bound by induction on $t$. The lower bound for $t=5$ follows from the lower bound in part 5. Suppose the lower bound is true for $t \geq 5$. We show it is true for $t+1$. By part 1 of the stronger version of Theorem \ref{edge estimates 3 t to t+1 relation}, this is equivalent to the statement that $e_{3,t}''(s) \gg s^{2/(t-2)}$. Let $(H,\mathcal{F})$ be a $(3,t)'$-system with $|H|=m$, $e(H)=e$ and $|\mathcal{F}|=s$. We need to show that $e \gg s^{(2/t-2)}$. By part 2 of the stronger version of Theorem \ref{edge estimates 3 t to t+1 relation} and the induction hypothesis, $e \geq e_{3,t}(k) \gg k^{2/(t-3)}$ for some $k \geq ms/(2e+m)$. We also trivially have $e \ll m^2$. Combining these gives $e \gg s^{2/(t-2)}$. \newline

Finally, we prove part 5. We first prove the lower bound. Let $(H,\mathcal{F})$ be a $(3,5)$-system with $e(H)=e$ and $|\mathcal{F}|=s$. We need to show that $e \geq 2s/3+o(s)$. By Lemma \ref{principal intersection}, either $(H[A],\mathcal{F})$ can be obtained from a $(3,4)'$-system   $(H',\mathcal{F}')$ using the second part of Observation \ref{induction observation} or there exists a $(3,3)'$-system $(H',\mathcal{F}')$ with $H' \subseteq H$ and $|\mathcal{F}'|=\Omega(s)$, where $A=\{v \in H : s(v) \geq 1\}$. In the second case, $e \geq e(H') \geq e''_{3,3}\left[ \Omega(s) \right] \gg s^{3/2}$ by the proof of part 2. So it suffices to show that $e''_{3,4}(s) \geq 2s/3+o(s)$. \newline

Let $(H,\mathcal{F})$ be a $(3,4)'$-system with $|H|=m$, $e(H)=e$ and $|\mathcal{F}|=s$. We need to show that $e \geq 2s/3+o(s)$. Note that $m \to \infty$ as $s \to \infty$. We double count the number of pairs $(e,S)$, where $e \in E(H)$ and $S \in \mathcal{F}$, with $|e \cap S|=1$. Note that we always have $|e \cap S| \leq 1$ by condition 2 in Definition \ref{3 prime system definition}. On the one hand, for every vertex $v \in H$, the number of pairs with $e \cap S=\{v\}$ is $d(v)s(v)$, so there are $\sum_{v \in H} d(v)s(v)$ such pairs. On the other hand, as in the proof of Lemma \ref{set to edge lemma}, by condition 2 in Definition \ref{3 prime system definition}, for every set $S \in \mathcal{F}$ there are at least $m-4$ edges $e$ with $|e \cap S|=1$, so there are at least $(m-4)s$ such pairs. Hence $\sum_{v \in H} d(v)s(v) \geq (m-4)s$. \newline

For each vertex $v \in H$, define $\Gamma'(v)$ and $\mathcal{F}_v$ as in the first part of Observation \ref{induction observation}. Then by the first part of Observation \ref{induction observation} and Theorem \ref{many cases}, $s(v)=|\mathcal{F}_v| \leq s_{3,3}'\left(|\Gamma'(v)|\right) = m-d(v)+O(1)$, or equivalently, $d(v)+s(v) \leq m+O(1)$. It is easy to check that $x+9y \geq 16xy$ for all real numbers $x,y \geq 0$ with $x+y \leq 1$. Taking $x=s(v)/[m+O(1)]$ and $y=d(v)/[m+O(1)]$ and summing over all $v \in H$ gives 

$$\frac{4s+18e}{m+O(1)} \ \geq \ \frac{16 \sum_{v \in H} d(v)s(v)}{[m+O(1)]^2} \ \geq \ \frac{16(m-4)s}{[m+O(1)]^2} \ , $$

which implies $e \geq 2s/3+o(s)$. \newline

We now prove the upper bound. Let $s$ be a large integer. We need to construct a maximal $(3,5)$-system $(H,\mathcal{F})$ with $|\mathcal{F}|=s$ and $e(H) \leq 2s+o(s)$. Let $(H_{4,l}',\mathcal{F}_{4,l}')$ be as in property 5 in Construction \ref{system construction}, where $l$ is the smallest integer with $|\mathcal{F}_{4,l}'| \geq s$. Then $l=\left[2^{1/4}+o(1)\right] s^{1/4}$ by property 4 in Construction \ref{system construction}. Since $(H_{4,l}',\mathcal{F}_{4,l}')$ is a maximal $(3,5)$-system, for every missing edge $e$ in $H_{4,l}'$ such that $H_{4,l}'+e$ is $K_3$-free there is a set in $\mathcal{F}_{4,l}'$ containing $e$. Pick a subfamily $\mathcal{F} \subseteq \mathcal{F}_{4,l}'$ with $\left|\mathcal{F}\right|=s$ such that for every missing edge $e$ in $H_{4,l}'$ such that $H_{4,l}'+e$ is $K_3$-free there is a set in $\mathcal{F}$ containing $e$. This is possible since $s \leq |\mathcal{F}_{4,l}'|$ and the number of missing edges $e$ in $H_{4,l}'$ such that $H_{4,l}'+e$ is $K_3$-free is $O(s^{3/4})$ by property 5 in Construction \ref{system construction}, which is at most $s$, since $s$ is large. Then it is easy to see that, since $(H_{4,l}',\mathcal{F}_{4,l}')$ is a maximal $(3,5)$-system and for every missing edge $e$ in $H_{4,l}'$ such that $H_{4,l}'+e$ is $K_3$-free there is a set in $\mathcal{F}$ containing $e$, $(H_{4,l}',\mathcal{F})$ is also a maximal $(3,5)$-system. Finally, $e(H_{4,l}')=l^4-2l^3+2l^2=2s+o(s)$ by property 3 in Construction \ref{system construction}.

\end{proof}

\vspace{1cm}

We will use the following lemma in the proof of Theorem \ref{connection with edges}.

\begin{lemma}\label{clean up lemma}

Let $r \geq 3$ and $t \geq r-2$ be integers and $(H,\mathcal{F})$ be a maximal $(r,t)$-system with one of the following properties.

\begin{enumerate}

\item There is a vertex $v \in H$ with $d(v)+s(v) \leq t$.
\item There are vertices $v \neq w \in H$ with $s(v)=0=s(w)$ and $\Gamma(v)=\Gamma(w)$.

\end{enumerate}

Then there exists a maximal $(r,t)$-system $(H',\mathcal{F}')$ with $|H'|=|H|-1$, $e(H') \leq e(H)+O(1)$ and $|\mathcal{F}|-O(1) \leq |\mathcal{F}'| \leq |\mathcal{F}|$.
\end{lemma}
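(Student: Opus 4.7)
My plan is to remove the vertex $v$ in both cases and then patch up the resulting structure. Set $\mathcal{F}_0 = \{S \in \mathcal{F} : v \notin S\}$. In both cases I take $H' = H \setminus \{v\}$, and I claim that $(H', \mathcal{F}_0)$ is automatically an $(r,t)$-system: conditions 1 and 3 of Definition \ref{r system} transfer directly, and for condition 2, since no $S \in \mathcal{F}_0$ contains $v$, the graphs $H'[S]$ and $H'[S \cup \{u\}]$ coincide with $H[S]$ and $H[S \cup \{u\}]$ for any $u \in V(H') \setminus S$. The only thing that can fail is maximality.

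In case 2, $\mathcal{F}_0 = \mathcal{F}$, and I would verify maximality directly by using the twin $w$. Given a missing edge $e$ of $H'$, maximality of $(H, \mathcal{F})$ says that $H + e$ contains either a $K_r$ or a $K_{r-1}$ in some $S \in \mathcal{F}$. In the second case $v \notin S$ and the $K_{r-1}$ lives in $H' + e$, so we are done. In the first case, if the $K_r$ avoids $v$ there is nothing to do; otherwise write the $K_r$ as $\{v, u_1, \ldots, u_{r-1}\}$. Each $u_i \in \Gamma(v) = \Gamma(w)$, and each $u_i \neq w$ (else $u_i \in \Gamma(u_i)$, a self-loop). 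Replacing $v$ by $w$ therefore produces a $K_r$ in $H' + e$, and we take $(H', \mathcal{F}') = (H', \mathcal{F})$. The three required inequalities hold with no additive error.

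In case 1, some missing edges of $H'$ may violate maximality, precisely those for which the original witness in $H$ used $v$ or a set containing $v$. I would bound the number of such \emph{bad} edges: if the witness was a $K_r$ in $H + e$ through $v$, then both endpoints of $e$ lie in $\Gamma(v)$, contributing at most $\binom{d(v)}{2}$ candidates; if the witness was a $K_{r-1}$ in $(H + e)[S]$ for some $S$ with $v \in S$, then the endpoints of $e$ lie in $S$, contributing at most $s(v)\binom{t}{2}$ candidates. Since $d(v) + s(v) \leq t$, the total number of bad edges is $O(1)$.

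Finally, I would greedily process the bad edges, adding each one to $H'$ whenever the addition preserves $K_r$-freeness of the graph and $K_{r-1}$-freeness inside every $S \in \mathcal{F}_0$. Each successful addition maintains the $(r,t)$-system conditions (condition 2 still holds because the required $K_{r-1}$'s in $S \cup \{u\}$ are already present in $H'$), while a failed addition directly witnesses maximality for that edge. Adding edges only strengthens the maximality condition on the remaining missing edges, so no new bad edges appear, and the process terminates after at most $O(1)$ insertions. The resulting $(H'', \mathcal{F}_0)$ is a maximal $(r,t)$-system with $|H''| = |H| - 1$, $e(H'') \leq e(H) - d(v) + O(1) \leq e(H) + O(1)$, and $|\mathcal{F}_0| = |\mathcal{F}| - s(v) \in [|\mathcal{F}| - t, |\mathcal{F}|]$. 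The main obstacle is purely bookkeeping: bounding the bad edges and checking that the greedy process does not spawn new ones, which follows from the monotonicity of the maximality condition under edge additions.
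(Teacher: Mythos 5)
Your argument is correct and follows the paper's proof closely: in case 1 you delete $v$ and restore maximality by adding edges, bounding the number of additions by observing that any re-added edge must lie inside $\Gamma(v)$ or inside one of the $s(v)$ sets through $v$ (giving at most $\binom{d(v)}{2}+s(v)\binom{t}{2}=O(1)$ edges, matching the paper's $(t+1)\binom{t}{2}$ bound); in case 2 you show that deleting $v$ already yields a maximal system, supplying the explicit twin-swap verification that the paper leaves as ``easy to check.'' The only cosmetic difference is that you pre-identify the ``bad'' edges and add them greedily, whereas the paper adds edges to reach maximality first and bounds the added set afterwards---the two orderings produce the same estimate.
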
 

\begin{proof}

Suppose first that property 1 holds. Let $H''=H \setminus \{v\}$ and $\mathcal{F}'=\{S \in \mathcal{F} : v \not \in S\}$. Then it is easy to see that $(H'',\mathcal{F}')$ is an $(r,t)$-system, so we can add edges to $H''$ to obtain a graph $H'$ such that $(H',\mathcal{F}')$ is a maximal $(r,t)$-system. Note that $|H'|=|H|-1$ and $|\mathcal{F}|-t \leq |\mathcal{F}'| \leq |\mathcal{F}|$. It remains to show that $e(H') \leq e(H)+O(1)$. Clearly, $e(H'') \leq e(H)$, so it suffices to show that only $O(1)$ edges were added to $H''$ when obtaining $H'$. \newline

Let $e$ be such an added edge. Then, since $(H,\mathcal{F})$ is a maximal $(r,t)$-system, either $H+e$ contains $K_r$ or $(H+e)[S]$ contains $K_{r-1}$ for some $S \in \mathcal{F}$. But $H$ and $H'$ are $K_r$-free and $H[S]$ and $H'[T]$ are $K_{r-1}$-free for all $S \in \mathcal{F}$ and $T \in \mathcal{F}'$, since $(H,\mathcal{F})$ and $(H',\mathcal{F}')$ are $(r,t)$-systems, so either $v$ and $e$ belong to a copy of $K_r$ in $H+e$ or $e$ belongs to a copy of $K_{r-1}$ in $(H+e)[S]$ for some $S \in \mathcal{F}$ with $v \in S$. In particular, either $e \subseteq \Gamma(v)$ or $e \subseteq S$ for some $S \in \mathcal{F}$ with $v \in S$. It follows that there are at most $(t+1)\binom{t}{2}$ such added edges $e$. \newline

Now suppose property 2 holds. Let $H'=H \setminus \{v\}$. Then it is easy to check that $(H',\mathcal{F})$ is a maximal $(r,t)$-system with $|H'|=|H|-1$ and $e(H') \leq e(H)$.

\end{proof}

\vspace{1cm}

We now prove Theorem \ref{connection with edges}, which we will use in the proofs of parts 1 and 2 of Theorem \ref{twin free min deg result}.

\begin{repth15}

For all integers $r \geq 3$ and $t \geq r+3$, 

$$tn + \Omega\left(e_{r,t}'\left[n+o(n)\right]\right) \leq \tsat(n,K_r,t) \leq tn + O\left(e_{r,t}'\left[n+o(n)\right]\right)  \ . $$

\end{repth15}

\begin{proof}

We first prove the lower bound. The proof will be similar to that of Theorem 2 in \cite{Mine}. Let $G$ be a $K_r$-saturated graph on $n$ vertices with $\delta(G) \geq t$ in which every pair of twins is adjacent to a vertex of degree $t$. We need to show that $e(G) \geq tn + \Omega\left(e_{r,t}'\left[n+o(n)\right]\right)$. \newline

By Lemma 4 in \cite{Mine}, there is a set $S \subseteq V(G)$ with $|S|=O(1)$ such that for all $v \in G$, either $|\Gamma(w) \cap S| > t$ for all $w \in \Gamma(v) \setminus S$ or

$$ |\Gamma(v) \cap S| + \frac{1}{2} |\Gamma(v) \setminus S| > t \ . $$

Let $T=S \cup \{v \in G : |\Gamma(v) \cap S| > t \}$. Then

$$ e(T) \ \geq \ e(S,T \setminus S) \ \geq \ (t+1)|T \setminus S| \ = \ (t+1)|T|+O(1) $$

by the definition of $T$ (where $e(T)=e(G[T])$ and $e(S,T \setminus S)$ is the number of edges between $S$ and $T \setminus S$). \newline

We now claim that for all $v \in G$ we have

$$ |\Gamma(v) \cap T| + \frac{1}{2} |\Gamma(v) \setminus T| \geq t \ , $$

with equality if and only if $\Gamma(v) \subseteq T$ and $d(v)=t$. Indeed, if  $|\Gamma(w) \cap S| > t$ for all $w \in \Gamma(v) \setminus S$, then $\Gamma(v) \subseteq T$, so 

$$ |\Gamma(v) \cap T| + \frac{1}{2} |\Gamma(v) \setminus T| = d(v) \geq t \ , $$

since $\delta(G) \geq t$, and if 

$$ |\Gamma(v) \cap S| + \frac{1}{2} |\Gamma(v) \setminus S| > t \ , $$

then

$$  |\Gamma(v) \cap T| + \frac{1}{2} |\Gamma(v) \setminus T| > t \ , $$

since $S \subseteq T$. \newline

Let $ R = \{ v \in G : \Gamma(v) \subseteq T, d(v)=t \} $. Note that $R \setminus T$ is an independent set of vertices of degree $t$. Let $H=G \setminus (R \setminus T)$ and for each subset $Q \subseteq R \setminus T$, let $H_Q=G\left[V(H) \cup Q\right]$. Then we have $e(G)=tn+e(H_Q)-t|H_Q|$ and

$$ e(H_Q) \ = \ e(T) + \sum_{v \in H_Q \setminus T} \left( |\Gamma(v) \cap T| + \frac{1}{2} \left|\Gamma(v) \cap V(H_Q) \setminus T\right| \right)  $$ 

$$ = \ e(T) + \sum_{v \in H_Q \setminus T} \left( |\Gamma(v) \cap T| + \frac{1}{2} \left|\Gamma(v) \setminus T \right| \right) $$

$$ \geq \ (t+1) |T| + O(1)  +  t|H_Q \setminus T| + \frac{1}{2} |H_Q \setminus \left(T \cup R \right)| \ \geq \ \left(t+ \frac{1}{2}\right) |H_Q| - \frac{1}{2} |Q| + O(1) \ .  $$

\vspace{1cm}

Taking $Q=\emptyset$, we obtain $e(G)=tn+e(H)-t|H| \geq tn+|H|/2+O(1)$, so if $|H|=\Omega(n)$, we have $e(G) \geq tn+\Omega(n) \geq tn+\omega\left(e'_{r,t}(n)\right)$, since by part 3 of Theorem \ref{edge estimates}, $e'_{r,t}(n) \ll n^{4/5}$. So suppose $|H|=o(n)$. Take $Q$ to be the set of vertices in $R \setminus T$ which have a twin. Then every vertex in $Q$ is adjacent to a vertex in $H$ of degree $t$, so $|Q| \leq t|H|=o(n)$. Let $\mathcal{F}=\{\Gamma(v) : v \not \in H_Q\}$. Then $(H_Q,\mathcal{F})$ is a maximal $(r,t)$-system by Observation \ref{obs 1}, so $e(H_Q) \geq e_{r,t}'\left[n+o(n)\right]$. Also, $|Q| \leq t|H|=t|H_Q|-t|Q|$ and $e(H_Q) \geq \left(t+1/2\right)|H_Q|-|Q|/2+O(1)$, so

$$t|H_Q| \ \leq \ \left(1-\frac{1}{2t^2+2t+1}\right)e(H_Q) + O(1) \ . $$ 

Hence

$$e(G) \ = \ tn+e(H_Q)-t|H_Q| \ \geq \ tn+ \frac{e(H_Q)}{2t^2+2t+1} + O(1) \ \geq \  tn + \Omega\left(e_{r,t}'\left[n+o(n)\right]\right) \ , $$ 

since by part 3 of Theorem \ref{edge estimates}, $e_{r,t}'\left[n+o(n)\right]=\omega(1)$. \newline

We now prove the upper bound. Let $n$ be a large integer. We need to construct a $K_r$-saturated graph $G$ on $n$ vertices with $\delta(G) \geq t$ and $e(G) \leq tn + O\left(e_{r,t}'\left[n+o(n)\right]\right)$ in which every pair of twins is adjacent to a vertex of degree $t$. Let $C$ be a constant, large enough for the proof to go through, and $N$ be the smallest integer with $N \geq n+Ce_{r,t}'(N)$. Such integers do exist, since by part 3 of Theorem \ref{edge estimates}, $e'_{r,t}(N) \ll N^{4/5}$. Then $N-1<n+Ce_{r,t}'(N-1)$, so $N=n+o(n)$. \newline

Take a maximal $(r,t)$-system $(H,\mathcal{F})$ with $|\mathcal{F}|=N$ and $e(H)=e'_{r,t}(N)$. Starting with $(H,\mathcal{F})$, repeatedly apply Lemma \ref{clean up lemma} until neither property 1 nor property 2 hold. Let $(H',\mathcal{F}')$ be the resulting maximal $(r,t)$-system. Then $d(v)+s(v) > t$ for all $v \in H'$ and either $s(v) \geq 1$ or $s(w) \geq 1$ for all $v \neq w \in H'$ with $\Gamma(v)=\Gamma(w)$. Let $m=|H|$. Then we applied Lemma \ref{clean up lemma} at most $m$ times. By Lemma \ref{set to edge lemma}, $m \ll e'_{r,t}(N)$, so $e(H') \ll e'_{r,t}(N)$ and 

$$ N \ \geq \ |\mathcal{F}'| \ = \ N-O(e'_{r,t}(N)) \ \geq \ n + Ce'_{r,t}(N) - O(e'_{r,t}(N)) \ \geq \ n \ , $$ 

provided $C$ is large enough. Let $m'=\left|H'\right|$. Then

$$ m' \ \leq \ m \ \ll \ e'_{r,t}(N) \ \ll \ N^{4/5} \ \asymp \ n^{4/5} \ , $$

so $(t+1)m' \leq n-m'$, since $n$ is large. \newline

Pick a subfamily $\mathcal{F}'' \subseteq \mathcal{F}'$ with $|\mathcal{F}''|=n-m'$ such that $s'(v) \geq \min\left(s(v),t+1\right)$ for all $v \in H'$, where $s'(v)=|\{S \in \mathcal{F}'' : v \in S\}|$. This is possible since $(t+1)m' \leq n-m' \leq n \leq \left|\mathcal{F}'\right|$. Then $(H',\mathcal{F}'')$ is an $(r,t)$-system and we still have $d(v)+s'(v) > t$ for all $v \in H'$ and either $s'(v) \geq 1$ or $s'(w) \geq 1$ for all $v \neq w \in H'$ with $\Gamma(v)=\Gamma(w)$. \newline

Add edges to $H'$ to obtain a graph $H''$ such that $(H'',\mathcal{F}'')$ is a maximal $(r,t)$-system. We claim that every added edge $e$ is contained in a set in $\mathcal{F}' \setminus \mathcal{F}''$. Indeed, since $(H',\mathcal{F}')$ is a maximal $(r,t)$-system, either $H'+e$ contains $K_r$ or $(H'+e)[S]$ contains $K_{r-1}$ for some $S \in \mathcal{F}'$. But $H''$ is $K_r$-free and $H'[S]$ and $H''[T]$ are $K_{r-1}$-free for all $S \in \mathcal{F}'$ and $T \in \mathcal{F}''$, since $(H',\mathcal{F}')$ and $(H'',\mathcal{F}'')$ are $(r,t)$-systems, so $e$ must belong to a copy of $K_{r-1}$ in $(H'+e)[S]$ for some $S \in \mathcal{F}' \setminus \mathcal{F}''$. In particular, $e \subseteq S$. \newline

It follows that  for all added edges $e$ and vertices $v \in e$, $s(v) \geq 1$ and so $s'(v) \geq \min\left(s(v),t+1\right) \geq 1$. Equivalently, $s'(v) \geq 1$ for all $v \in H'$ with $\Gamma(v) \subset \Gamma'(v)$, where $\Gamma'(v)$ is the neighbourhood of $v$ in $H''$. Hence, we still have either $s'(v) \geq 1$ or $s'(w) \geq 1$ for all $v \neq w \in H'$ with $\Gamma'(v)=\Gamma'(w)$. We also still have $d'(v)+s'(v) > t$ for all $v \in H'$, where $d'(v)$ is the degree of $v$ in $H''$, since $d'(v) \geq d(v)$. Let $N' \in \{N-1,N\}$ be such that $\max[e_{r,t}'(N-1),e_{r,t}'(N)]=e'_{r,t}(N')$. Note that $N'=n+o(n)$. Since 

$$|\mathcal{F}' \setminus \mathcal{F}''| \ \leq \ N-n+m' \ < \ Ce_{r,t}'(N-1)+m+1 \ \ll \ e'_{r,t}(N') \ , $$

it also follows from the previous paragraph that the number of added edges is $O(e'_{r,t}(N'))$, so $e(H'') \ll e'_{r,t}(N')$.  \newline

Let $G=G(H'',\mathcal{F}'')$. Then $G$ is $K_r$-saturated by Observation \ref{obs 1} and $|G|=|H''|+\left|\mathcal{F}''\right|=n$. For every vertex $v \in H''$, the degree of $v$ in $G$ is $d'(v)+s'(v)>t$, and all other vertices in $G$ have degree $t$. Hence $\delta(G) \geq t$. Let $v \neq w \in G$ be twins. We need to show that they are adjacent to a vertex of degree $t$, or equivalently, that $v, w \in H''$ and $s'(v)=s'(w) \geq 1$. Since $v$ and $w$ have the same degree, either $v,w \not \in H''$ or $v,w \in H''$. The former is impossible, since $\mathcal{F}''$ has no repeated elements, and if the latter holds, we must have $s'(v)=s'(w) \geq 1$, since $s'(v)=s'(w)$ and $\Gamma'(v) = \Gamma'(w)$. Finally, $e(G)=e\left(H''\right)+t\left|\mathcal{F}''\right|=tn + O(e'_{r,t}(N'))$. \newline

\end{proof}

\vspace{1cm}

Next, we prove Theorem \ref{twin free result}.

\begin{repth12}
We have the following estimates for $\tsat(n,K_r)$.
\begin{enumerate}
\item For all integers $r \geq 3$,
$$\left(r+2\right)n + o(n) \leq \tsat(n,K_r) \leq (r+3)n+o(n) \ .$$
\item In the special case $r=3$, we obtain a stronger lower bound.
$$\left(5 + \frac{2}{3}\right)n + o(n) \leq \tsat(n,K_3) \leq 6n+o(n) \ .$$
\end{enumerate}
\end{repth12}

\begin{proof}

We first prove the lower bounds. Let $G$ be a twin-free $K_r$-saturated graph on $n$ vertices. We need to show that $e(G) \geq \left(r+2\right)n + o(n)$ and $e(G) \geq \left(5 + 2/3 \right)n + o(n)$ in the special case $r=3$. If $e(G)=\omega(n)$, there is nothing to prove, so we may assume $e(G)=O(n)$. Then by Theorem 4 in \cite{Mine}, $G$ has a vertex cover $C$ of size $m=O(n/\log n)$. Let $H=G[C]$ and $\mathcal{F}=\{\Gamma(v) : v \not \in C\}$. Then $(H,\mathcal{F})$ is a maximal $r$-system by Observation \ref{obs 1}. Note that $\mathcal{F}$ has no repeated elements, since $G$ is twin-free. \newline

Recall (see the second paragraph of the introduction) that $r-2$ is the smallest possible degree of a vertex in a $K_r$-saturated graph on at least $r-1$ vertices. Since $n$ is large, it follows that every set in $\mathcal{F}$ has size at least $r-2$. For each integer $r-2 \leq t \leq r+1$, let $\mathcal{F}_t=\{S \in \mathcal{F} : |S|=t \}$. Then, since $(H,\mathcal{F})$ is a maximal $r$-system, $(H,\mathcal{F}_t)$ is an $(r,t)$-system, so $|\mathcal{F}_t| \leq s_{r,t}(m) \ll m \ll n/ \log n$ by parts 1 through 4 of Theorem \ref{set estimates}. Hence there are only $O(n/\log n)$ vertices outside $C$ of degree at most $r+1$, so $e(G) \geq \left(r+2\right)n + O(n/\log n)$. \newline

In the special case $r=3$, let $\mathcal{F}_5=\{S \in \mathcal{F} : |S|=5 \}$. Then, as before, since $(H,\mathcal{F})$ is a maximal $3$-system, $(H,\mathcal{F}_5)$ is a $(3,5)$-system. Hence $e(H) \geq 2|\mathcal{F}_5|/3 + o\left(|\mathcal{F}_5|\right)$ by part 5 of Theorem \ref{edge estimates}. Also, since there are only $O(n/\log n)$ vertices outside $C$ of degree at most $4$, $e\left(C,V(G)\setminus C \right) \geq 6n-|\mathcal{F}_5|+O(n/ \log n)$. So 

$$ e(G) \ = \ e(H) + e\left(C,V(G)\setminus C \right) \ \geq \ 6n-|\mathcal{F}_5|/3 + o\left(|\mathcal{F}_5|\right) + O(n/ \log n) \ . $$

But $|\mathcal{F}_5| \leq n$, so $e(G) \geq \left(5 + 2/3 \right)n + o(n)$. \newline

We now prove the upper bounds. By Lemma \ref{twin-free-conical}, it is sufficient to prove the upper bound in part 2. Let $n$ be a large integer. We need to construct a twin-free $K_3$-saturated graph $G$ on $n$ vertices with $e(G) \leq 6n+o(n)$. Let $l=\left\lceil n^{1/5} \right\rceil$ and $(H_{5,l}',\mathcal{F}_{5,l}')$ be as in property 5 in Construction \ref{system construction}. By properties 2 and 4 in Construction \ref{system construction}, $|H_{5,l}'|=5l^2+1$ and $|\mathcal{F}_{5,l}'|=l^5$. Pick a subfamily $\mathcal{F} \subseteq \mathcal{F}_{5,l}'$ with $\left|\mathcal{F}\right|=n-5l^2-1$ such that every missing edge in $H_{5,l}'$ that is contained in a set in $\mathcal{F}_{5,l}'$ is contained in a set in $\mathcal{F}$. This is possible since $n-5l^2-1 \leq l^5=|\mathcal{F}_{5,l}'|$ and the number of missing edges in $H_{5,l}'$ is $O(n^{4/5})$, which is at most $n-5l^2-1=n+O\left(n^{2/5}\right)$, since $n$ is large. Then it is easy to see that, since $(H_{5,l}',\mathcal{F}_{5,l}')$ is a maximal $(3,6)$-system, so is $(H_{5,l}',\mathcal{F})$. \newline

Let $G=G(H_{5,l}',\mathcal{F})$. Then $G$ is $K_3$-saturated by Observation \ref{obs 1} and $|G|=|H_{5,l}'|+|\mathcal{F}|=n$. The vertices in $H_{5,l}'$ have degree $\omega(1)$ in $G$ by property 3 in Construction \ref{system construction} and all other vertices in $G$ have degree $6$. It follows that $\delta(G)=6$. We will use this to prove the upper bound in part 4 of Theorem \ref{twin free min deg result}. It also follows that for all twins $v \neq w \in G$, either $v,w \not \in H_{5,l}'$ or $v,w \in H_{5,l}'$. But $\mathcal{F}$ has no repeated elements and $H_{5,l}'$ is twin-free by property 5 in Construction \ref{system construction}, so $G$ is twin-free. Finally, $e(G)=e(H_{5,l}')+6|\mathcal{F}|=6n+O(n^{4/5})$. \newline     

\end{proof}

\vspace{1cm}

Finally, we prove Theorem \ref{twin free min deg result}.

\begin{repth16}
We have the following estimates for $\tsat(n,K_r,t)$.
\begin{enumerate}
\item For all integers $r \geq 3$ and $t \geq r+3$, 
$$tn + \Omega\left(n^{1/(t-r+2)} \right) \leq \tsat(n,K_r,t) \leq tn+ O\left( n^{4/(t-r+2) } \right)  \ . $$
\item For all integers $t \geq 6$, 
$$tn + \Omega\left(n^{2/(t-3)} \right) \leq \tsat(n,K_3,t) \leq tn + O\left(n^{4/(t-1)}\right)  \ . $$
\item For all integers $r \geq 3$ and $r-2 \leq t \leq r+2$,
$$\left(r+2\right)n + o(n) \leq \tsat(n,K_r,t) \leq (r+3)n+o(n) \ .$$
\item For all integers $1 \leq t \leq 5$,
$$\left(5 + \frac{2}{3}\right)n + o(n) \leq \tsat(n,K_3,t) \leq 6n+o(n) \ .$$
\end{enumerate}
\end{repth16}

\begin{proof}

Parts 1 and 2 follow from Theorem \ref{connection with edges} and parts 3 and 4 of Theorem \ref{edge estimates}, respectively. We now prove parts 3 and 4. We first prove the lower bounds. The proof will be similar to that of the lower bounds in Theorem \ref{twin free result}. Let $G$ be a $K_r$-saturated graph on $n$ vertices with $\delta(G) \geq t$ in which every pair of twins is adjacent to a vertex of degree $t$. We need to show that $e(G) \geq \left(r+2\right)n + o(n)$ and $e(G) \geq \left(5 + 2/3 \right)n + o(n)$ in the special case $r=3$. \newline

If $e(G)=\omega(n)$, there is nothing to prove, so we may assume $e(G)=O(n)$. Then, as in the proof of the lower bounds in Theorem \ref{twin free result}, by Theorem 4 in \cite{Mine}, $G$ has a vertex cover $C$ of size $O(n/\log n)$. Let $S$ be the set of vertices $v \not \in C$ which have a twin. Then every vertex in $S$ is adjacent to a vertex in $C$ of degree $t$, so $|S|\leq t|C|=O(n/\log n)$. Let $H=G\left[C \cup S\right]$, $\mathcal{F}=\{\Gamma(v) : v \not \in C \cup S\}$ and $m=|H|=O(n/\log n)$. Then $(H,\mathcal{F})$ is a maximal $r$-system by Observation \ref{obs 1}. Note that $\mathcal{F}$ has no repeated elements.   \newline

Recall again (see the second paragraph of the introduction) that $r-2$ is the smallest possible degree of a vertex in a $K_r$-saturated graph on at least $r-1$ vertices. Since $n$ is large, it follows that every set in $\mathcal{F}$ has size at least $r-2$. For each integer $r-2 \leq t \leq r+1$, let $\mathcal{F}_t=\{S \in \mathcal{F} : |S|=t \}$. Then, since $(H,\mathcal{F})$ is a maximal $r$-system, $(H,\mathcal{F}_t)$ is an $(r,t)$-system, so $|\mathcal{F}_t| \leq s_{r,t}(m) \ll m \ll n/ \log n$ by parts 1 through 4 of Theorem \ref{set estimates}, as before. Hence there are only $O(n/\log n)$ vertices outside $C \cup S$ of degree at most $r+1$, so $e(G) \geq \left(r+2\right)n + O(n/\log n)$. \newline

In the special case $r=3$, let $\mathcal{F}_5=\{S \in \mathcal{F} : |S|=5 \}$. Then, as before, since $(H,\mathcal{F})$ is a maximal $3$-system, $(H,\mathcal{F}_5)$ is a $(3,5)$-system. Hence $e(H) \geq 2|\mathcal{F}_5|/3 + o\left(|\mathcal{F}_5|\right)$ by part 5 of Theorem \ref{edge estimates}. Also, since there are only $O(n/\log n)$ vertices outside $C \cup S$ of degree at most $4$, $e\left(C,V(G)\setminus C \right) \geq 6n-|\mathcal{F}_5|+O(n/ \log n)$. So 

$$ e(G) \ = \ e(H) + e\left(C,V(G)\setminus C \right) \ \geq \ 6n-|\mathcal{F}_5|/3 + o\left(|\mathcal{F}_5|\right) + O(n/ \log n) \ . $$

But $|\mathcal{F}_5| \leq n$, so $e(G) \geq \left(5 + 2/3 \right)n + o(n)$, as in the proof of the lower bounds in Theorem \ref{twin free result}. \newline

Finally, we prove the upper bounds. By Lemma \ref{twin-free-conical-min-degree}, it is sufficient to prove the upper bound in part 4. Recall that in the proof of the upper bound in part 2 of Theorem \ref{twin free result}, we constructed a twin-free $K_3$-saturated graph $G$ on $n$ vertices with $\delta(G)=6$ and $e(G)=6n+O(n^{4/5})$ for every large integer $n$. The upper bound in part 4 follows. \newline

\end{proof}

\section{Open problems}\label{Open problems}

The obvious open problems are to improve the estimates for the extremal quantities we defined.

\begin{probl}
Improve the estimates for $\tsat(n,K_r)$ in Theorem \ref{twin free result}.
\end{probl}

\begin{probl}
Improve the estimates for $\tsat(n,K_r,t)$ in Theorem \ref{twin free min deg result}.
\end{probl}

\begin{probl}
Improve the estimates for $s_{r,t}(m)$ in Theorem \ref{set estimates}.
\end{probl}

\begin{probl}
Improve the estimates for $e_{r,t}(s)$ and $e'_{r,t}(s)$ in Theorem \ref{edge estimates}.
\end{probl}

\vspace{0.5cm}

In light of Theorem \ref{existence}, one might ask the following.

\begin{q}
For which $r$, $t$ and $n$ does there exist a $K_r$-saturated graph $G$ on $n$ vertices with $\delta(G) \geq t$ that cannot be obtained by blowing up a smaller $K_r$-saturated graph $H$ with $\delta(H) \geq t$?
\end{q}

\vspace{0.5cm}

Though in some sense the $K_r$ case is the most natural one, one could also define $\tsat(n,H)$ to be the minimum number of edges in a twin-free $H$-saturated graph on $n$ vertices for other graphs $H$. As in the case $H=K_r$, it is not immediately clear whether such graphs even exist. By Theorem \ref{existence}, in the case $H=K_r$, such graphs do exist for fixed $r$ and large $n$.

\begin{q}\label{Q}
For which graphs $H$ do there exist twin-free $H$-saturated graphs on $n$ vertices for large enough $n$?
\end{q}

The answer to Question \ref{Q} is not ``All graphs". Indeed, it is easy to show that when $H$ is a matching, all $H$-free graphs are blow-ups of a graph with $O(1)$ vertices. Indeed, let $G$ be an $H$-free graph. Let $S$ be the set of matched vertices in a maximal matching in $G$. Then, since $G$ is $H$-free, $|S|=O(1)$, and since the matching is maximal, $V(G) \setminus S$ is an independent set. Together, these imply that $G$ is a blow-up of a graph with $O(1)$ vertices. \newline

K\'aszonyi and Tuza showed that $\sat(n,H)=O(n)$ for all graphs $H$ (see Theorem 1 in \cite{Kaszonyi and Tuza}). It is not at all clear whether the same should hold for $\tsat(n,H)$, when it is well-defined.

\begin{q}
Do we have  $\tsat(n,H)=O(n)$ for every graph $H$ for which there exist twin-free $H$-saturated graphs on $n$ vertices for large enough $n$?
\end{q}

Tuza conjectured that the ratio $\sat(n,H)/n$ converges for all graphs $H$ (see Conjecture 10 in \cite{Tuza's Conjecture}). Tuza's Conjecture is still wide open. It is again not at all clear whether the same should hold for $\tsat(n,H)$ when $\tsat(n,H)=O(n)$.

\begin{q}
Does $\tsat(n,H)/n$ converge for all graphs $H$ for which $\tsat(n,H)=O(n)$?
\end{q}

\vspace{1cm}

\end{document}